\newcommand{\highlight}[1]{\colorbox{white}{#1}}
\numberwithin{equation}{section}
\newcommand{\nn}{\nonumber}
\newtheorem{theorem}{Theorem}[section]
\newtheorem{proposition}[theorem]{Proposition}
\theoremstyle{definition}
\theoremstyle{definition} 
\newtheorem{remark}[theorem]{Remark}
\newcommand{\subsubset}{\subset\joinrel\subset}
\newcommand{\bea}{\begin{eqnarray}}
\newcommand{\eea}{\end{eqnarray}}
\newcommand{\beas}{\begin{eqnarray*}}
\newcommand{\eeas}{\end{eqnarray*}}
\newcommand{\beq}{\begin{equation}}
\newcommand{\eeq}{\end{equation}}
\newcommand{\cA}{\mathcal A}
\newcommand{\cD}{\mathcal D}
\newcommand{\cE}{\mathcal E}
\newcommand{\cS}{\mathcal S}
\newcommand{\cMH}{\mathcal{MH}}
\newcommand{\cP}{\mathcal P}
\newcommand{\cSM}{\mathcal{SM}}
\newcommand{\cT}{\mathcal T}
\newcommand{\C}{\mathbb C}
\newcommand{\R}{\mathbb R}
\newcommand{\LL}{\mathbb L}
\newcommand{\EE}{\mathbb E}
\newcommand{\FF}{\mathbb F}
\newcommand{\HH}{\mathbb H}
\newcommand{\PP}{\mathbb P}
\newcommand{\cSX} {{\mathcal S}{\mathcal X}}
\newcommand{\ve}{\varepsilon}
\DeclareMathSymbol{\complement}{\mathord}{AMSa}{"7B}
\def\vv<#1>{\langle #1\rangle}
\def\Vv<#1>{\bigl\langle #1\bigr\rangle}
\begin{document}


\title[Stability Of Equilibrium Shapes]
{Stability Of Equilibrium Shapes In Some Free Boundary Problems Involving Fluids}

\author[G.~Simonett]{Gieri Simonett}
\address{Department of Mathematics\\
         Vanderbilt University \\
         Nashville, TN~37240, USA}
\email{gieri.simonett@vanderbilt.edu}

\author[M. Wilke]{Mathias Wilke}
\address{Universit\"at Regensburg\\
         Fakult\"at f\"ur Mathematik \\
         D-93040, Germany}
\email{mathias.wilke@mathematik.uni-regensburg.de}

\thanks{The research of G.S.\ was partially
supported by the NSF Grant DMS-1265579.}

\begin{abstract}
In this chapter the motion of two-phase, incompressible, viscous fluids with surface tension is investigated.
Three cases are considered: (1) the case of heat-conducting fluids,
(2) the case of isothermal fluids, and (3) the case of Stokes flows.
In all three situations, the equilibrium states in the absence of outer forces are characterized
and their stability properties are analyzed. 
It is shown that the equilibrium states correspond to the critical points of a natural 
physical or geometric functional (entropy, available energy, surface area)
constrained by the pertinent conserved quantities (total energy, phase volumes).
Moreover, it is shown that solutions which do not develop singularities exist globally and 
converge to an equilibrium state.
\end{abstract}
\maketitle


\section{Introduction}\label{sect-intro}
In this chapter, the motion of two heat-conducting, incompressible, viscous 
Newtonian fluids in $\R^n$, $n\ge 2$, that are separated by a free interface is considered.
The position of the separating interface is unknown and has to be determined as part of the problem.

More precisely, the fluids are assumed to fill a bounded region $\Omega\subset\R^n$.
Let $\Gamma_0\subset\Omega$ be a given surface which bounds
the region $\Omega_{1}(0)$ occupied by an incompressible viscous 
fluid, $fluid_{1}$, called the {\em dispersed phase},
and let $\Omega_{2}(0)$ be the complement of
the closure of $\Omega_{1}(0)$ in $\Omega$, corresponding
to the region occupied by a second incompressible viscous fluid,
$fluid_{2}$, called the {\em continuous phase}. 
The two fluids are assumed to be immiscible, 
and the dispersed phase is assumed to not being in contact with the boundary 
$\partial\Omega$ of $\Omega$.

Let $\Gamma(t)$ denote the position of $\Gamma_{0}$
at time $t$. Thus, $\Gamma(t)$ is a sharp interface
which separates the fluids occupying the
regions $\Omega_1(t)$ and $\Omega_2(t)$, respectively.
The unit normal field on $\Gamma(t)$,
pointing from $\Omega_1(t)$ into $\Omega_2(t)$, is denoted by $\nu_\Gamma(t,\cdot)$.
Moreover, 
$V_\Gamma(t,\cdot)$ is the normal velocity and $H_\Gamma(t,\cdot):=-{\rm div}_\Gamma \nu_\Gamma$ 
the $(n-1)$-fold mean curvature (that is, the sum of the principal curvatures) of $\Gamma(t)$ 
with respect to $\nu_\Gamma(t,\cdot)$,
respectively. Here, 
$H_\Gamma$ is negative when $\Omega_1(t)$ is convex in a neighborhood of $x\in\Gamma(t)$. With this convention,
$H_\Gamma= -(n-1)/R$ if $\Gamma$ is a sphere of radius $R$ in $\R^n$.

The following notation for the physical quantities involved will be employed throughout this chapter:
\medskip\\
\begin{tabular}{ll}
 $\varrho_i$       & \hskip .2cm  the density of fluid$_i$  \\[.3em]
 $u_i$             & \hskip .2cm  the velocity of fluid$_i$ \\[.3em]
 $\pi_i$           & \hskip .2cm  the pressure of fluid$_i$ \\[.3em]
 $\theta_i$        & \hskip .2cm  the (absolute) temperature of fluid$_i$ \\[.3em]
 $\upmu_i(\theta_i)$   & \hskip .2cm  the shear viscosity of fluid$_i$ \\[.3em]
 $d_i(\theta_i)$     & \hskip .2cm  the heat conductivity of fluid$_i$ \\[.3em]
 $\sigma $           & \hskip .2cm  the surface tension \\[.3em]
 $D_i=D(u_i)=\frac{1}{2}([\nabla u_i] +[\nabla u_i]^{\sf T})$    & \hskip .2cm the rate of strain tensor  \\[.3em]
 $T_i=T(\theta_i,u_i,\pi_i)=2\upmu_i(\theta_i)D(u_i)-\pi_i I $  & \hskip .2cm the stress tensor.
 \end{tabular}

\bigskip
The densities $\varrho_i>0$ as well as the surface tension $\sigma>0$ are taken to be constant.

Let $\psi_i(\theta) $ denote the \highlight{\em Helmholtz free energy} of fluid$_i$.
Is is noted that the free energy is a constitutive quantity that depends on the physical properties
of the respective fluids.
Several physical and thermodynamic quantities are derived from $\psi_i$
as follows:

\bigskip

\begin{tabular}{ll}
$\epsilon_i(\theta)= \psi_i(\theta)+\theta\eta_i(\theta)$ 
	&\hskip .2cm  the (mass specific) \highlight{internal energy}  of fluid$_i$ \\[.3em]
$\eta_i(\theta) =-\psi_i^\prime(\theta)$
	 &\hskip .2cm the (mass specific) \highlight{entropy} of fluid$_i$ \\[.3em]
$\kappa_i(\theta)= e^\prime_i(\theta)=-\theta\psi_i^{\prime\prime}(\theta)$  
	&\hskip .2cm  the heat capacity of fluid$_i$. \\[.3em]
 \end{tabular}

\bigskip
\noindent
In the sequel, the index $i$ is occasionally dropped, but it should be noted that
the quantities $\psi$, $\epsilon$, $\eta$, $\kappa$, $\upmu$, and $d$
may have a jump across the interface $\Gamma$.

It is assumed throughout that the velocity $u$ and the temperature $\theta$ be continuous across $\Gamma$.
The motion of two heat-conducting, incompressible, viscous 
Newtonian fluids is then described by the following coupled system
\begin{equation}
\label{NS-heat}
\begin{aligned}
 \varrho\big(\partial_tu+(u|\nabla)u\big)- {\rm div}\, T
                         &=  0 &&\hbox{in} &&\Omega\setminus\Gamma(t),\\
{\rm div}\,u & = 0 &&\hbox{in} &&\Omega\setminus\Gamma(t),\\
 u &=0      &&\hbox{on} &&\partial\Omega,\\    
{[\![u]\!]} & = 0  &&\hbox{on} &&\Gamma(t),\\
 -{[\![T\nu_\Gamma]\!]}& = \sigma H_\Gamma \nu_\Gamma &&\hbox{on} &&\Gamma(t),\\   
        u(0)& = u_0        &&\hbox{in} &&\Omega_{0},\\
\end{aligned}
\end{equation}

\begin{equation}
\label{heat}
\begin{aligned}
\varrho\kappa(\theta)(\partial_t \theta  + (u|\nabla \theta)) -{\rm div}(d(\theta)\nabla\theta) &=2\upmu(\theta)|D(u)|_2^2
&& \mbox{in} &&\Omega\setminus \Gamma(t),\\
\partial_\nu \theta &=0 &&\mbox{on} &&\partial\Omega,\\
[\![\theta]\!] &=0 &&\mbox{on} &&\Gamma(t),\\
[\![d(\theta)\partial_\nu\theta]\!]&=0 &&\mbox{on} &&\Gamma(t),\\
\theta(0)&=\theta_0  && \mbox{in} &&\Omega_0,
\end{aligned}
\end{equation}

\begin{equation}
\label{kinematic}
\begin{aligned}
\hspace{2.5cm}
V_\Gamma& = (u|\nu_\Gamma)  && \hbox{on} &&\Gamma(t),\\               
\Gamma(0)& = \Gamma_0.\\
\end{aligned}
\end{equation}
Here,
$(z|w)=\sum_{j=1}^n z_j \bar w_j$ denotes the inner product in $\C^n$ 
for $z,w\in\C^n$,
$$
[\![\phi]\!](t,x)=\lim_{h \rightarrow 0+}\big(\phi(t,x + h\nu_\Gamma(x)) - \phi(t,x - h\nu_\Gamma(x))\big),\quad x \in \Gamma(t),$$
denotes the jump of the quantity $\phi$, defined on the respective
domains $\Omega_i(t)$, across the interface $\Gamma(t)$,
and 
$|D(u)|_2:= ({\rm trace}\,D(u)^2)^{1/2}$
is the Hilbert-Schmidt norm of the (symmetric) matrix $D(u)$.

\medskip
\noindent
For $\Omega$ and the quantities $\psi_i$, $d_i, \upmu_i$ the following regularity and positivity conditions
are assumed:
\begin{equation}
\label{condition-H}
\begin{aligned}
&\partial\Omega\in C^3,\;
\psi_i\in C^3(0,\infty),\;\; d_i,\upmu_i\in C^2(0,\infty),\;\; \\
&\psi_i^{\prime\prime}(s),d_i(s),\upmu_i(s)>0,\, s>0.
\end{aligned}
\end{equation}

\noindent
Given is the initial position $\Gamma_{0}$, the initial velocity 
$u_0:\Omega_0\to \R^n$ and the initial temperature $\theta_0:\Omega_0\to \R,$ 
where $\Omega_0:=\Omega\setminus\Gamma_0$.

The unknowns in \eqref{NS-heat}-\eqref{kinematic} are
the free boundary $\Gamma(t)=\partial\Omega_1(t)$,
the velocity field $u(t,\cdot):\Omega\setminus\Gamma(t) \to \R^{n}$,
the pressure     $\pi(t,\cdot):\Omega\setminus\Gamma(t) \to \R$,
and the temperature  $\theta(t,\cdot):\Omega\setminus\Gamma(t) \to \R$,
where $n\ge 2$.

In case $\upmu$ is constant, 
the Navier-Stokes system \eqref{NS-heat} decouples from the
advection - diffusion equation  \eqref{heat}.

\medskip
In the {\em isothermal} case, that is, in case that $\theta$ is constant,
system~\eqref{NS-heat}-\eqref{kinematic} reduces to
\begin{equation}
\label{NS-iso}
\begin{aligned}
 \varrho\big(\partial_tu+(u|\nabla)u\big)- \upmu\Delta u +\nabla\pi
                         &=  0 &&\hbox{in} &&\Omega\setminus\Gamma(t),\\
{\rm div}\,u & = 0 &&\hbox{in} &&\Omega\setminus\Gamma(t),\\
 u &=0      &&\hbox{on} &&\partial\Omega,\\    
{[\![u]\!]} & = 0  &&\hbox{on} &&\Gamma(t),\\
 -{[\![T\nu_\Gamma]\!]}& = \sigma H_\Gamma \nu_\Gamma &&\hbox{on} &&\Gamma(t),\\   
V_\Gamma& = (u|\nu_\Gamma)  && \hbox{on} &&\Gamma(t),\\               
u(0)& = u_0        &&\hbox{in} &&\Omega_{0},\\
\Gamma(0)& = \Gamma_0,\\
\end{aligned}
\end{equation}
resulting in the {\em isothermal Navier-Stokes problem with surface tension}.
The corresponding one-phase problem is obtained by setting $\varrho_2=\upmu_2=0$
and discarding $\Omega_2$.

\medskip
If $\theta$ is constant and inertia (i.e., the term $\varrho(\partial_t u + (u|\nabla) u))$ is ignored,  
one is left with a quasi-stationary problem, the {\em two-phase Stokes problem with surface tension},
 which generates the \highlight{\em two-phase Stokes flow with surface tension}.
More precisely, this problem reads
\begin{equation}
\label{Stokes-flow}
\begin{aligned}
-\upmu\Delta u +\nabla\pi&=0 &&\mbox{in} &&\Omega\setminus \Gamma(t),\\
{\rm div}\, u&=0 &&\mbox{in} &&\Omega\setminus \Gamma(t),\\
u&=0 &&\mbox{on} &&\partial\Omega,\\
[\![u]\!] &= 0 &&\mbox{on} &&\Gamma(t),\\
-[\![T\nu_\Gamma]\!]  &=\sigma H_\Gamma\nu_\Gamma &&\mbox{on} &&\Gamma(t),\\
V_\Gamma &= (u|\nu_\Gamma) &&\mbox{on} &&\Gamma(t),\\
\Gamma(0)&=\Gamma_0. &&
\end{aligned}
\end{equation}

The {\em isothermal one-phase Navier-Stokes problem}  has received wide attention
in the last three decades or so. Existence and uniqueness of solutions
for $\sigma=0$, as well as $\sigma>0$, in case that $\Omega_0$ is bounded
has been extensively studied in a series of papers by Solonnikov,
see for instance~\cite{Sol84, Sol86,Sol87b,Sol87a, Sol89, Sol91,Sol03a, 
Sol03b, Sol04a, Sol04b, Sol08}
and Mogilevski{\u\i} and Solonnikov~\cite{MoSo92}.
Results were established in anisotropic Sobolev-Slobodetskii as well as in  H\"older spaces.
Moreover, it was shown in~\cite{Sol87b} that if $\Omega_{0}$ is sufficiently close
to a ball and the initial velocity $u_{0}$ is sufficiently small,
then the solution exists globally and converges to a uniform rigid rotation
of the liquid about a certain axis which is moving uniformly with a constant speed,
see also Padula and Solonnikov~\cite{PaSo02}.

More recently, local and global existence and uniqueness
results (in case that $\Omega_0$ is a bounded domain,
a perturbed infinite layer, or a perturbed half-space)
in  anisotropic Sobolev spaces $W^{2,1}_{q,p}$
have been established by Shibata and Shimizu~\cite{ShSh07a, ShSh08, ShSh11b}
for $\sigma=0$ as well as $\sigma>0$.
Additional existence results can be found in Mucha and Zaj{a}czkowski~\cite{MuZa00} and Abels~\cite{Abe05b}.

The motion of a layer of an incompressible, viscous fluid in an ocean of infinite extent,
bounded below by a solid surface and above by a free surface which includes
the effects of surface tension and gravity, was considered by
Allain~\cite{All87},
Beale~\cite{Bea84}, Beale and Nishida~\cite{BeNi84},
Tani~\cite{Tan96a}, and by Tani and Tanaka~\cite{TaTa95}.
If the initial state and the initial velocity are close
to equilibrium, global existence of solutions was proved
in~\cite{Bea84} for $\sigma>0$,
and in~\cite{TaTa95} for $\sigma\ge 0$,
and the asymptotic decay rate for $t\to\infty$
was studied in~\cite{BeNi84}.

Existence and uniqueness of local strong solutions for the 
{isothermal two-phase problem}~\eqref{NS-iso}
was first studied by Denisova~\cite{Den90, Den94} and Denisova and Solonnikov~\cite{DeSo91, DeSo95},
while global existence results were established in \cite{Den07, Den14, DeSo11, Sol14}.
Shimizu~\cite{Shi11} obtained existence and uniqueness results in
anisotropic Sobolev $W^{2,1}_{q,p}$-spaces. 


Pr\"uss and Simonett~\cite{PrSi10a, PrSi10b, PrSi11}
considered the two-phase Navier-Stokes equations
with $\sigma>0$ in a situation where the free boundary $\Gamma$ is given
as the graph of a function over a hyperplane, and gravity is acting on the fluids
\cite{PrSi10a, PrSi11}.
It was shown in~\cite{PrSi10b, PrSi11}
that solutions regularize and immediately become real analytic in space and time.
It is well-known that the situation where gravity acts on two superposed immiscible fluids - with the heavier fluid lying above a fluid of lesser density - can lead to an instability, 
the famous {Rayleigh-Taylor instability},
see Pr\"uss and Simonett~\cite{PrSi10b}, Wang and Tice~\cite{WaTi12}, and Wilke~\cite{Wil14}
for results in this direction.

K\"ohne, Pr\"uss, and Wilke~\cite{KPW13} obtained existence and uniqueness of strong solutions
with maximal regularity for \eqref{NS-iso}.
It was shown that the equilibrium states are given by
 zero velocities, constant pressures in the phase components,
while $\Omega_1$ consists of a collection of balls (of possibly different) radii.
Moreover, nonlinear stability of equilibria and convergence was established.
A similar result was also obtained in \cite{DeSo11, Sol14} by a different approach.

Tanaka~\cite{Tan95} and Denisova~\cite{Den05} considered the two-phase Navier-Stokes equations
with thermo-capillary convection \eqref{NS-heat}-\eqref{kinematic} and obtained existence and uniqueness of strong local solutions.

Existence and uniqueness of solutions for the quasi-stationary one-phase Stokes flow
was first obtained  by G\"unther, Prokert~\cite{GuPr97, Pro97},
see also Solonnikov~\cite{Sol99}. 
It  was shown in ~\cite{GuPr97} that in case the initial domain of a fluid drop is close to a ball,
the solution exits globally and converges to a ball at an exponential rate.
This result was rederived by Friedman and Reitich~\cite{FrRe02b} by a different method.

 A common approach employed by many authors to analyze free boundary problems in
fluid flows relies on the use of Lagrangian coordinates.

Here, a different approach is used, namely the  {\em direct mapping method} 
based on the \highlight{\em Hanzawa transform},
which has proven itself to be particularly useful in the study of phase transitions,
and in the study of fluid flows in the presence of phase transitions,
see  for instance 
\cite{PSSS12, PrSh12, PSSW14, PSW14, PSW12, PSZ13a, PSZ13b},
and the monograph \cite{PrSi16}.
This approach was also essential in establishing regularity properties 
of solutions for incompressible fluid flows, see \cite{PrSi10a, PrSi10b, PrSi11} and \cite{PrSi16}.
The direct mapping method has also been employed by Solonnikov in the
recent works \cite{Sol03a, Sol04a, Sol04b, Sol08, Sol14}.

The reader will find a systematic exposition of the Hanzawa transform and its application to 
free boundary problems in Chapter 7.1 of this handbook.

\smallskip
The main results of this chapter concern the stability analysis of equilibria
and the qualitative behavior of global solutions.
It turns out that the equilibrium states for the problems introduced above are not isolated,
but instead give rise to a finite-dimensional smooth manifold $\cE$.
Consequently, the kernel of the linearization $L$ (to be discussed below) 
at an equilibrium is nontrivial and has at least the 
dimension of the manifold $\cE$.

It will be shown that for any equilibrium $e_*\in\cE$, the kernel ${\sf N}(L)$ is
in fact isomorphic to the tangent space of $\cE$ at $e_*$, the eigenvalue
$0$ of $L$ is semi-simple, and the remaining spectral part of the
linearization $-L$ is stable. 
Hence, every equilibrium $e_*$ is \highlight{\em normally stable}. 

It will then be shown that every solution that starts out close to an equilibrium 
exists globally and converges to a (possibly different) equilibrium at an exponential rate.
In a simpler context, this result has been termed  
the {\em generalized principle of linearized stability}, see \cite{PSZ09b, PSZ09a}.
For problem~\eqref{NS-heat}--\eqref{kinematic}, as well as problem~\eqref{NS-iso},
a significant challenge arises as solutions
live on a nonlinear manifold, the {\em state manifold} $\cSM$,
caused by nonlinear compatibility conditions. 

Nonlinear stability for problem \eqref{NS-heat}-\eqref{kinematic} and  problem \eqref{NS-iso} 
will be obtained by an application of the implicit function theorem in
combination with degree theory.

It will be shown that each of the problems introduced above 
possesses a natural (strict) Lyapunov functional.
Consequently, the limit sets
of  solutions are contained in the set of equilibria $\cE$. 
Combining this with compactness of solutions and the local stability of equilibria
one shows that any solution which does not develop singularities converges
to an equilibrium in the topology of the state manifold $\cSM$.

Stability of equilibria for the two-phase isothermal Navier-Stokes problem \eqref{NS-iso}
was established by K\"ohne, Pr\"uss, and Wilke \cite{KPW13}, and 
 Denisova, Solonnikov~\cite{DeSo11, Sol14},
while the corresponding results for the two-phase  Navier-Stokes equations with heat conduction 
\eqref{NS-heat}-\eqref{kinematic}
as well as the two-phase Stokes flow \eqref{Stokes-flow} are contained
in the monograph by Pr\"uss and Simonett~\cite{PrSi16}.

It is interesting to note that similar stability results also hold in
the more complex situation of fluid flows with phase transitions,
with the notable difference that multiple spheres turn out to be unstable in the presence of 
phase transitions.
The reader is once more referred to the monograph \cite{PrSi16} for a comprehensive discussion 
of fluid flows with phase transitions.
\section{Equilibria, energy, and entropy}\label{sect-existence}
In this section it will be shown that the total energy for problem \eqref{NS-heat}-\eqref{kinematic}
is preserved, while the total entropy is nondecreasing,
implying that the model is thermodynamically consistent.
In addition, it will be shown that equilibrium states correspond to zero velocities,
constant pressures in the components of the phases, constant temperature, 
while the dispersed phase consists of a union of disjoint balls.
\subsection{Local existence}
The basic well-posedness result for problem~\eqref{NS-heat}-\eqref{kinematic} 
reads as follows, where 
 $\cP_\Gamma=I-\nu_\Gamma\otimes \nu_\Gamma$ denotes the orthogonal projection onto the tangent space of $\Gamma$.
\begin{theorem}
\label{thm:local-ex}
Let $p>n+2$ and suppose that condition \eqref{condition-H} holds.
Assume the regularity conditions
\begin{equation*}
(u_0,\theta_0)\in W^{2-2/p}_p(\Omega\setminus\Gamma_0)^{n+1},\quad \Gamma_0\in W^{3-2/p}_p,
\end{equation*}
the compatibility conditions
\begin{equation*}
\begin{aligned}
&{\rm div}\, u_0=0 \;\;{\rm in}\;\; \Omega\setminus\Gamma_0,
\quad u_0=0\;\;{\rm and}\;\; \partial_\nu\theta_0=0\;\; {\rm on}\;\; \partial\Omega,\\
& [\![u_0]\!]=0,\;\; 
\cP_{\Gamma_0}[\![\upmu(\theta_0) D(u_0) \nu_{\Gamma_0}]\!]=0\;\;
{\rm on}\;\; \Gamma_0, \\
&[\![\theta_0 ]\!]=0,\;\; [\![d(\theta_0)\partial_\nu\theta_0]\!]=0\;\; {\rm on}\;\; \Gamma_0,
\end{aligned}
\end{equation*}
and  the well-posedness condition $\theta_0>0$ in $\bar\Omega$. 
\\
Then there exists a number $a=a(u_0,\theta_0,\Gamma_0)$ 
and a unique classical solution $(u,\pi,\theta,\Gamma)$
of \eqref{NS-heat}-\eqref{kinematic} on the time interval $(0,a)$.
Moreover, 
${\mathcal M}=\bigcup_{t\in (0,a)}\{t\}\times \Gamma(t)$
is real analytic, provided  the functions $\psi_i$, $\upmu_i$, and $d_i$ share this property.
\end{theorem}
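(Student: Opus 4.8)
The plan is to use the direct mapping method based on the \emph{Hanzawa transform} to reduce the free boundary problem \eqref{NS-heat}--\eqref{kinematic} to a quasilinear evolution problem on a fixed domain, and then to obtain local well-posedness from $L_p$-maximal regularity together with the contraction mapping principle. First I would parametrize the unknown interface as a normal graph $\Gamma(t)=\{x+h(t,x)\nu_{\Gamma_0}(x):x\in\Gamma_0\}$ over the fixed reference surface $\Gamma_0$, with the height function $h$ as a new unknown, and extend this to a Hanzawa diffeomorphism $\Theta_h$ carrying the fixed reference phases $\Omega_i^0$ onto the moving phases $\Omega_i(t)$. Pulling the fields $(u,\pi,\theta)$ back by $\Theta_h$ produces a transformed system posed on the fixed domain $\Omega\setminus\Gamma_0$: its principal part is linear in $(u,\pi,\theta,h)$, while the transport term $(u|\nabla)u$, the viscous heating, the curvature $H_\Gamma$, and all coefficient perturbations arising from the transform are collected into nonlinear right-hand sides depending on $(u,\theta,h)$ and their derivatives. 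The transmission conditions $[\![u]\!]=0$, $\cP_{\Gamma_0}[\![\upmu(\theta)D(u)\nu]\!]=0$, $[\![\theta]\!]=0$, $[\![d(\theta)\partial_\nu\theta]\!]=0$ are preserved in structure, and the normal stress balance couples $(u,\pi)$ to $h$ through surface tension.

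The core analytic step is to establish $L_p$-maximal regularity for the full linearization of the transformed system at the initial data. To leading order this linearization decouples into two blocks: (i) a two-phase Stokes problem for $(u,\pi)$ carrying the dynamic boundary condition generated by surface tension, in which the linearized normal stress jump and the kinematic relation $V_\Gamma=(u|\nu_\Gamma)$ couple $u$ to $h$ and to the Laplace-Beltrami operator $\Delta_{\Gamma_0}h$; and (ii) a two-phase parabolic transmission problem for $\theta$ with the interface condition $[\![d\,\partial_\nu\theta]\!]=0$ and the boundary condition $\partial_\nu\theta=0$. For each block maximal regularity is obtained by verifying $\cR$-sectoriality of the associated operator and invoking the operator-valued Fourier multiplier theorem; the Stokes block is the delicate one, since the surface-tension condition raises the spatial order in $h$ and yields a nonstandard boundary symbol whose resolvent must be shown to be $\cR$-bounded on the relevant sector, with the pressure recovered consistently from the divergence constraint. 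The coupling terms between the two blocks---namely the viscous heating $2\upmu(\theta)|D(u)|_2^2$ and the temperature dependence of $\upmu$ and $d$---are of lower order relative to the principal parts and therefore enter only as controllable perturbations or as nonlinear data.

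With maximal regularity in hand, I would solve the full nonlinear problem by a fixed point argument in the maximal regularity class $\EE_a$ on a short interval $(0,a)$ determined by the regularity of the data. The restriction $p>n+2$ is used precisely here: it provides the anisotropic Sobolev embeddings---for instance bounded and H\"older-continuous control of $u$, $\nabla u$, $\theta$, $\nabla\theta$ and of $h$, $\nabla h$---needed to estimate the quadratic transport term $(u|\nabla)u$, the viscous heating, and the fully nonlinear curvature term, so that the solution map becomes a self-map and a contraction on $\EE_a$ for $a$ small. The Banach fixed point theorem then yields the unique local classical solution $(u,\pi,\theta,\Gamma)$ on $(0,a)$, with $a=a(u_0,\theta_0,\Gamma_0)$ controlled by the size of the data in the relevant trace spaces.

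Finally, the real analyticity of $\mathcal M=\bigcup_{t\in(0,a)}\{t\}\times\Gamma(t)$ follows from the \emph{parameter trick}. One introduces scaling and translation parameters $(\lambda,\xi)$ near $(1,0)$ acting on the independent variables and observes that the rescaled solution solves a problem whose coefficients depend analytically on $(\lambda,\xi)$---this is exactly where the real analyticity of $\psi_i,\upmu_i,d_i$ is invoked. Applying the implicit function theorem in the maximal regularity setting gives analytic dependence of the solution on $(\lambda,\xi)$, and differentiating at $(\lambda,\xi)=(1,0)$ transfers this to joint real analyticity of the solution, hence of $\Gamma(t)$, in $(t,x)$ for $t>0$. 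I expect the maximal regularity analysis of the Stokes block to be the main obstacle, as establishing the requisite $\cR$-boundedness for the coupled $(u,\pi,h)$ symbol---a higher-order dynamic boundary condition in $h$ together with the pressure reconstruction---is technically the most demanding ingredient.
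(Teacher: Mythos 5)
Your proposal is correct and follows essentially the same route as the paper, which does not give an argument itself but refers to \cite[Sections 9.2 and 9.4]{PrSi16}, where the proof proceeds exactly as you outline: reduction via the Hanzawa transform to a quasilinear problem on the fixed domain, maximal $L_p$-regularity for the linearized two-phase Stokes block with surface tension and the parabolic transmission block (via $\mathcal R$-boundedness and operator-valued multiplier theorems), a contraction argument exploiting the embeddings available for $p>n+2$, and the parameter trick (time-scaling and translation, using analyticity of $\psi_i$, $\upmu_i$, $d_i$) for real analyticity of $\mathcal M$. Your identification of the coupled $(u,\pi,h)$ boundary symbol as the technically delicate point is also consistent with the cited treatment.
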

\begin{proof}
For a proof the reader is referred to \cite[Sections 9.2 and 9.4]{PrSi16}.
\end{proof}
\subsection{The state manifold}
It can be shown that the closed  $C^2$-hypersurfaces contained in $\Omega$ which bound a region 
$\Omega_1\subsubset\Omega$ 
form a $C^2$-manifold, denoted by $\cMH^2(\Omega)$, see for instance \cite{PrSi13}, or \cite[Chapter 2]{PrSi16}.

The charts are the parameterizations over a given hypersurface $\Sigma$, and the tangent
space consists of the normal vector fields on $\Sigma$.

\medskip

The \highlight{state manifold} $\cSM$ for problem \eqref{NS-heat}-\eqref{kinematic} is defined by
\begin{eqnarray}
\label{def-SM}
\cSM:=&&\hspace{-0.5cm}\Big\{(u,\theta,\Gamma)\in C(\bar{\Omega})^{n+1}\times \cMH^2(\Omega): 
 \nonumber\\
 &&  (u,\theta)\in W^{2-2/p}_p(\Omega\setminus\Gamma)^{n+1},\;\; \Gamma\in W^{3-2/p}_p,\nonumber\\
 && {\rm div}\, u=0\;\; \mbox{in}\;\; \Omega\setminus\Gamma, \quad \theta>0\;\;\mbox{in}\;\;\bar\Omega, 
 \quad  u=0,\;\;\partial_\nu\theta=0\;\; \mbox{on}\;\; \partial\Omega,\nonumber\\
 &&  \cP_\Gamma\, [\![ \upmu (\theta)D(u)\nu_\Gamma]\!]=0, \;\;
     [\![d(\theta)\partial_\nu \theta ]\!]=0 \;\; \mbox{on}\;\;\Gamma
\Big\}.\nonumber
\end{eqnarray}
Charts for this manifold are obtained by the charts induced by $\cMH^2(\Omega)$,
followed by a Hanzawa transformation.

Applying Theorem \ref{thm:local-ex} and re-parameterizing the interface repeatedly,
one shows that \eqref{NS-heat}-\eqref{kinematic} yields a local semiflow on $\cSM$.

It is noticeable that the pressure $\pi$ does not explicitly occur as a variable 
in the definition of the state manifold $\cSM$.
In fact,  $\pi$ is determined at each time $t$ from
$(u,\theta,\Gamma)$ by means of the weak  transmission problem
\begin{equation*}
\begin{aligned}\label{weak-transmission}
\left.\left(\varrho^{-1}\nabla{\pi}\,\right|\nabla\phi\right)_{L_2(\Omega)}&=
\left.\left( 2\varrho^{-1} {\rm div}\,(\upmu(\theta)D(u))-(u|\nabla) u\,\right|\nabla\phi\right)_{L_2(\Omega)},
\quad \phi\in H^{1}_{p^\prime}(\Omega),\\
[\![\pi]\!]&= \sigma H_\Gamma + ([\![2\upmu(\theta)D(u)\nu_\Gamma ]\!]|\nu_\Gamma)
\quad\mbox{on}\;\;\Gamma.
\end{aligned}
\end{equation*}
Concerning such transmission problems the reader is referred to 
\cite[Theorem 8.5]{KPW13} or \cite[Proposition 8.6.2]{PrSi16}.  
\subsection{Conservation of phase volumes}
Suppose that the dispersed phase $\Omega_1$ consists of $m$ disjoint connected components,
 $\Omega_1=\bigcup_{k=1}^m \Omega_{1,k}$. Let
$\Gamma_{\!k}:=\partial\Omega_{1,k}$, $\Gamma=\bigcup_{k=1}^m\Gamma_{\!k}$,
and let
\begin{equation}
{\sf M}_k:=|\Omega_{1,k}|,\quad k=1,\cdots,m,
\end{equation}
denote the volume of $\Omega_{1,k}$. 
Then for any  (sufficiently smooth) solution $(u,\pi,\theta,\Gamma)$ of 
problem~\eqref{NS-heat}-\eqref{kinematic} one obtains, see for instance \cite[Chapter 2]{PrSi16},
\begin{equation*}
\begin{aligned}
\frac{d}{dt}|\Omega_{1,k}(t)|=\int_{\Gamma_{\!k}} V_\Gamma\,d\Gamma_{\!k} 
= \int_{\Gamma_{\!k}} (u|\nu_\Gamma)\,d\Gamma_{\!k} 
=\int_{\Omega_{1,k}}{\rm div}\,u\,dx =0
\end{aligned}
\end{equation*}
for $k=1,\cdots,m$.
This shows that problem \eqref{NS-heat}-\eqref{kinematic} preserves the volume of each
individual phase component. 
\subsection{Conservation of energy}
The \highlight{\em total energy} for problem~\eqref{NS-heat}-\eqref{kinematic} is defined by
$${\sf E}:= {\sf E}(u,\theta,\Gamma):=
\frac{1}{2} \int_{\Omega\setminus\Gamma} \varrho |u|^2\,dx 
+\int_{\Omega\setminus\Gamma}\varrho\epsilon(\theta)\,dx  +\sigma |\Gamma|, $$
where $|\Gamma|$ denotes the surface area of $\Gamma$.
For the time derivative of ${\sf E}$ one obtains
\begin{align*}
&\frac{d}{dt}{\sf E} = \int_\Omega \{\varrho(\partial_t u|u) + \varrho\partial_t \epsilon(\theta)\}\,dx
-\int_\Gamma \{[\![\frac{\varrho}{2}|u|^2 +\varrho \epsilon(\theta)]\!]+\sigma H_\Gamma\}V_\Gamma\,d\Gamma\\
& = -\int_\Omega  \{\varrho((u|\nabla)u|u) - ({\rm div}\,T | u)
+\varrho (u|\nabla \epsilon(\theta))  - {\rm div}\, (d(\theta)\nabla \theta) - 2\upmu(\theta)|D(u)|_2^2\}\,dx\\
&\quad-\int_\Gamma\big \{[\![\frac{\varrho}{2}|u|^2 +\varrho \epsilon(\theta)]\!]
+\sigma H_\Gamma\big\}V_\Gamma\, d\Gamma\\
&=- \int_\Gamma\big\{
[\![d(\theta)\partial_\nu\theta ]\!] + [\![(Tu|\nu_\Gamma) ]\! ] +\sigma H_\Gamma V_\Gamma\big\}\,d\Gamma \\
&=- \int_\Gamma\big\{
 [\![d(\theta)\partial_\nu\theta ]\!] + ([\![T\nu_\Gamma ]\!] +\sigma H_\Gamma \nu_\Gamma|u)\big\}\,d\Gamma=0,
\end{align*}
implying that the total energy is preserved.
\subsection{Entropy and equilibria}
The \highlight{\em total entropy} for \eqref{NS-heat}-\eqref{kinematic} is defined by
\begin{equation*}
\label{entropy}
\Phi(\theta,\Gamma)=\int_{\Omega\setminus\Gamma} \varrho\eta(\theta)\,dx.
\end{equation*}
With the relation $\epsilon^\prime(\theta)=\theta\eta^\prime(\theta)$ one obtains
\begin{align*}
\frac{d}{dt}& \Phi(t) = \int_\Omega \varrho \partial_t \eta(\theta)\,dx
 - \int_\Gamma [\![\varrho \eta(\theta)]\!] V_\Gamma\, d\Gamma\\
&= \int_\Omega\varrho\eta^\prime(\theta)\partial_t\theta\,dx
- \int_\Gamma [\![\varrho \eta(\theta)]\!] V_\Gamma\,d\Gamma\\
&= \int_\Omega\Big(\frac{1}{\theta}\Big\{2\upmu(\theta)|D(u)|_2^2 + {\rm div}\,(d(\theta)\nabla\theta)\Big\}
-\varrho (u|\nabla \eta(\theta))\Big)\,dx
- \int_\Gamma [\![\varrho \eta(\theta)]\!] V_\Gamma\, d\Gamma\\
&= \int_\Omega 
\Big(\Big\{\frac{2\upmu(\theta)|D(u)|_2^2}{\theta}+\frac{d(\theta)|\nabla\theta|^2}{\theta^2}\Big\} 
+ {\rm div}\,\Big(\frac{d(\theta)}{\theta}\nabla\theta \Big)-\varrho (u|\nabla \eta(\theta))\Big)\,dx \\
&\quad - \int_\Gamma [\![\varrho \eta(\theta)]\!] V_\Gamma\, d\Gamma\\
&= \int_\Omega \Big\{\frac{2\upmu(\theta)|D(u)|^2_2}{\theta}+\frac{d(\theta)|\nabla\theta|^2}{\theta^2}\Big\}\,dx.
\end{align*}
Hence the total entropy is nondecreasing, and this shows that the model is thermodynamically consistent.

\medskip
\noindent
It follows that the negative entropy 
is a Lyapunov functional for system \eqref{NS-heat}-\eqref{kinematic}.

Even more, $-\Phi$ is a strict Lyapunov functional.
To see this, assume that $\Phi$ is constant on some interval $(t_1,t_2)$. Then $d\Phi/dt=0$ in $(t_1,t_2)$, hence $D(u)=0$ and $\nabla\theta=0$ in $(t_1,t_2)\times\Omega$.
Therefore, $\theta$ is constant on connected components, and as $\theta$ is continuous,
it follows that $\theta=\theta_\ast$ on $\Omega$, with $\theta_\ast$ a constant.
Next, by $[\![u]\!]=0$ and Korn's inequality we  have $\nabla u=0$, 
and then $u=0$ by the no-slip condition on $\partial\Omega$.
 This implies further $(\partial_t\theta,\partial_t u)=0$ and $V_\Gamma=0$, i.e., the system is at equilibrium.
 Furthermore, $\nabla\pi=0$, and consequently the pressure is constant in the components of the bulk phases. 
 Therefore,
 $\sigma H_\Gamma=[\![\pi]\!]$ is constant on each component $\Gamma_{\!k}$ of the interface $\Gamma$.
 This implies that the dispersed phase $\Omega_1$ is a ball if it is connected, 
 or a collection $\bigcup B(x_k,R_k)$ of balls, with the radii 
 $R_k$ of the balls related to the pressures by the \highlight{\em Young-Laplace law}
 \begin{equation}
 \label{Young-Laplace}
 [\![\pi]\!]\Big|_{\Gamma_{\!k}}=\sigma H_{\Gamma_{\!k}} = -\frac{\sigma (n-1)}{R_k}.
 \end{equation}
\begin{remark}
{\bf (i)} It is noted that at equilibrium the dispersed phase consists of at most
countably many disjoint balls $B(x_k, R_k)$. If there are infinitely many of them,
then $R_k\to0$ as $k\to\infty$, hence the corresponding curvatures $H_{\Gamma_{\!k}}= -(n-1)/R_k$
tend to infinity, and so do the pressures inside these balls. This is due to the model
assumption that there are no phase transitions. On the other hand, phase transitions will
occur at very high pressure levels.
Therefore, although thermodynamically consistent, the model \eqref{NS-heat}-\eqref{kinematic} is physically not very realistic. 

To avoid this contradiction,
in the sequel only equilibria in which the dispersed phase consists
of finitely many balls are considered. Note also that the free boundary will not
be of class $C^2$ if $\Omega_1$ has infinitely many components.

\medskip
\goodbreak
{\bf (ii)} There is another pathological case which will be excluded in the sequel, 
namely the one where the dispersed phase contains balls touching each other. This can only happen if the radii
of these balls are equal, as the pressure jump would otherwise not be constant on $\Gamma$.
Physically one would expect such an equilibrium to be unstable.
Observe that also in this situation the free boundary $\Gamma$ is not a manifold of class~$C^2$.
\end{remark}
\subsection{The manifold of equilibria}
As shown above the equilibrium states of system \eqref{NS-heat}-\eqref{kinematic} 
are  zero velocities $u_*=0$, constant pressures $\pi_*$ in the phases, constant temperature $\theta_*$, 
and the dispersed phase $\Omega_1$ consists of a collection of balls.
An equilibrium is called {\em non-degenerate} if
\begin{itemize}
\item[{\bf (i)}] $\Omega_1$ consists of a {\em finite} collection of balls;
\vspace{2mm}
\item[{\bf (ii)}] the balls do not touch the outer boundary $\partial\Omega$ and do not touch each other.
\end{itemize}
Suppose then that $\Omega_1=\bigcup_{k=1}^m B(x_k,R_k)$, with $B(x_k,R_k)$ a ball with center $x_k$ 
and radius $R_k$.
Then 
\begin{equation}
\label{def-S}
\begin{aligned}
\cS:=\Big\{\Sigma=\bigcup_{k=1}^m\Sigma_k:\; &\Sigma_k=\partial B(x_k,R_k), \;\;  \bar B(x_k,R_k)\subset\Omega, \\
&\hspace{1cm} \bar B(x_k,R_k)\cap\bar B(x_j,R_j)=\emptyset\;\;\mbox{for}\;\; j\neq k \Big\}
\end{aligned}
\end{equation}
forms a smooth (in fact analytic) manifold of dimension $m(n+1)$.
To verify this, it will be shown how a neighboring sphere can be parameterized over a given one.
Let us assume that $\Sigma=\partial B(0,R)$ is centered at the origin of $\R^n$.
Suppose ${S}\subset \Omega$ is a sphere that is
sufficiently close to $\Sigma$ and denote by $(y_1,\cdots,y_n)$ the
coordinates of its center and let $y_0$ be such that $R+y_0$
corresponds to its radius. 
One verifies that
$(R+y_0)^2=\sum_{j=1}^n ((R+\delta)Y^j-y_j)^2,$ where
$\delta$ denotes the signed distance function with respect to $\Sigma$,
and  $Y^j$ are the spherical harmonics of degree one on $\Sigma$.
Using the relation  $\sum_{j=1}^n (Y^j)^2=1$ and solving the quadratic equation for $R+\delta$ 
shows that  ${S}$ can be parameterized over $\Sigma$ (in normal direction) by 
\[
\delta(y_0,y_1,\cdots,y_n)=\sum_{j=1}^n y_j Y^j-R+\sqrt{(\sum_{j=1}^n y_j
Y^j)^2+(R+y_0)^2-\sum_{j=1}^n y_j^2}.
\]
Clearly, the mapping $z\mapsto \delta(z)$ provides a real analytic parameterization.
The case of $m$ spheres can be treated analogously,
yielding $m(n+1)$ degrees of freedom.
Finally, 
\begin{equation}
\label{def-E}
\cE=\{(0,\theta_*,\Sigma):\; \mbox{$\theta_*>0$ is constant},\;\; \Sigma\in\cS\} 
\end{equation}
denotes the set of all non-degenerate equilibria of \eqref{NS-heat}-\eqref{kinematic}.
It follows that $\cE$ is a real analytic manifold of dimension $m(n+1)+1$.
Here we note that the (constant) pressures at equilibrium can be determined by the Young-Laplace law~\eqref{Young-Laplace}. 
\subsection{Equilibria as critical points of the entropy}
Suppose that $\Omega_1$ is the union of $m$ disjoint components $\Omega_{1,k}$
and $\Gamma=\bigcup_{k=1}^m \Gamma_{\!k}$ with $\Gamma_{\!k}=\partial\Omega_{1,k}$.

The goal of this subsection is to determine the critical points of the total entropy $\Phi$ under the constraints of 
given total energy ${\sf E}={\sf E}_0$
and given phase volumes ${\sf M}_k={\sf M}_{0,k}$, say on $C(\bar{\Omega})^{n+1}\times\cMH^2(\Omega)$.
See Section 2.2 for the definition of $\cMH^2(\Omega)$.

\medskip
With $\Phi(\theta,\Gamma)=\int_{\Omega\setminus\Gamma} \varrho\eta(\theta)\,dx$ and
$$ {\sf M}_k(\Gamma)=|\Omega_{1,k}|,
\quad  {\sf E(u,\theta,\Gamma)}=\int_{\Omega\setminus\Gamma} \{(\varrho/2)|u|^2 + \varrho\epsilon(\theta)\}\,dx + \sigma |\Gamma|,$$
the method of Lagrange multipliers yields
$$ \Phi^\prime (\theta,\Gamma) +\sum_{k=1}^m\lambda_k {\sf M}_k^\prime(\Gamma) 
+ \mu {\sf E}^\prime(u,\theta,\Gamma) =0$$
at a critical point $e=(u,\theta,\Gamma)$.
We will now compute 
the derivatives of these functionals
in the direction of
$$z=(v, \vartheta, h_1,\cdots,h_m),$$
where each component $\Gamma_k$ is varied in the direction of the normal vector 
field $h_k \nu_{\Gamma_{\!k}}$, with $h_k:\Gamma_{\!k}\to\R$ a (sufficiently smooth) function.
This yields
\begin{align*}
\langle\Phi^\prime(\theta,\Gamma)|z\rangle &= \int_\Omega\varrho \eta^\prime(\theta) \vartheta \,dx
-\sum_{k=1}^m\int_{\Gamma_{\!k}} [\![\varrho\eta(\theta)]\!]h_k \,d\Gamma_{\!k},\\
\langle {\sf M}_k^\prime(\Gamma) | z\rangle &= \int_{\Gamma_{\!k}} h_k\,d\Gamma_{\!k},\qquad k=1,\cdots,m, \\
\langle {\sf E}^\prime(u,\theta,\Gamma) | z\rangle
&= \int_\Omega \!\{\varrho (u|v) \!+\!\varrho \epsilon^\prime(\theta) \vartheta \}\,dx 
-\!\sum_{k=1}^m \!\int_{\Gamma_{\!k}}\! \{[\![(\varrho/2)|u|^2 \!\!+\!\varrho \epsilon(\theta)]\!]  
\!+\! \sigma H_{\Gamma_{\!k}}\}h_k\,d\Gamma_{\!k}.
\end{align*}
Varying $\vartheta$ first, while setting all other variables of $z$ equal to zero,  yields
$$ \varrho \eta^\prime(\theta) +\mu \varrho \epsilon^\prime(\theta) =0, $$  
hence $\epsilon^\prime(\theta)=\theta \eta^\prime(\theta) = \kappa(\theta)>0$ 
implies $\theta=-1/\mu>0$ constant. Next we vary $v$ to obtain $u=0$ as $\mu\neq0$.
 Finally, each component $h_k$ is varied separately, to the result
$$-[\![\varrho\eta(\theta)]\!] + \lambda_k  +([\![\varrho\epsilon(\theta)]\!] 
+ \sigma H_{\Gamma_{\!k}})/\theta=0
$$
on $\Gamma_{\!k}$, which by the definition of $\epsilon$ yields
\begin{equation}
\label{lambda-k}
 [\![\varrho \psi(\theta)]\!] + \sigma H_{\Gamma_{\!k}} = -\lambda_k \theta \;\; \mbox{on}\;\; \Gamma_{\!k}.
\end{equation}
This implies that $H_{\Gamma_{\!k}}$ is constant for each $k\in\{1,\cdots,m\}$, hence $\Omega_1$ consists 
of a finite collection of balls.

In summary, the critical points $e=(0,\theta,\Gamma)$ 
of the total entropy with the constraints of prescribed phase volumes 
and prescribed total energy are precisely the equilibria of the system.

\medskip
\goodbreak
It is also interesting to note that each equilibrium $e=(0,\theta,\Gamma)$ with $\Gamma=\bigcup_{k=1}^m\Gamma_{\!k}$
is a local maximum of the entropy  w.r.t.\ the constraints
${\sf E}={\sf E}_0$ and ${\sf M_k}={\sf M}_{0,k}$ constant. 
In order to verify this one needs to show that
$$\cD(e):=[\Phi+ \sum_{k=1}^m \lambda_k {\sf M}_k +\mu {\sf E}]^{\prime\prime}(e)$$ 
is negative definite on 
${\rm ker}\,({\sf E}^\prime(e))\bigcap_{k=1}^m {\rm ker}\,({\sf M}^\prime_k(e)),$
the intersection of the kernels of ${\sf E}^\prime(e)$ and  ${\sf M}_k^\prime (e)$, 
where $(\lambda_1,\cdots,\lambda_m,\mu)$ are the fixed Lagrange multipliers found above.
The kernel of ${\sf M}_k^\prime(e)$ is given by
\begin{equation}\label{kM}
\int_{\Gamma_{\!k}} h_k \,d\Gamma_{k} =0, \quad k=1,\cdots,m,
\end{equation}
and that of ${\sf E}^\prime(e)$ by
\begin{equation}\label{kE}
\int_\Omega\varrho\kappa(\theta)\vartheta \,dx
=\sum_{k=1}^m \big([\![\varrho\epsilon(\theta)]\!]+\sigma H_{\Gamma_{\!k}}\big)\int_{\Gamma_{\!k}}h_k\,d\Gamma_{\!k}.
\end{equation}
A straightforward calculation yields
\begin{align}\label{2var}
-\theta\langle \cD(e) z|z\rangle &= \int_\Omega \varrho|v|^2\,dx
+ \int_\Omega(\varrho\kappa(\theta)/\theta)\vartheta^2\,dx 
- \sigma \sum_{k=1}^m\int_{\Gamma_{\!k}} (H_{\Gamma_{\!k}}^\prime h_k)h_k \,d\Gamma_{\!k}.\nn
\end{align}
It is well-known that the linearization of $H_{\Gamma_k}$ at a sphere $\Gamma_k$ is given by
$$H_{\Gamma_{\!k}}^\prime= (n-1)/R_k^2+\Delta_{\Gamma_{\!k}},$$ 
where  $R_k$ is the radius of the sphere  and $\Delta_{\Gamma_{\!k}}$ 
denotes the Laplace-Beltrami operator on $\Gamma_{\!k}$,
see for instance \cite{PrSi13}, or \cite[Chapter 2]{PrSi16}.
As $\varrho$, $\kappa$ are positive and $H^\prime_{\Gamma_{\!k}}$ is negative semi-definite
for all functions in $L_2(\Gamma_{\!k})$ with mean value zero,
one concludes that the form 
$\langle \cD(e) z|z\rangle$ is indeed negative definite on $L_2(\Gamma)$.  
\medskip

\noindent
An  equilibrium $e$ is generically not isolated. 
If a sphere $\Gamma_{\!k}$ does not touch the outer boundary, 
it may be moved inside of $\Omega$ without changing the total entropy. 
This fact is reflected in $\cD(e)$ by choosing $(v,\vartheta)=0$ and $h_k=Y_k^j$, 
the spherical harmonics for $\Gamma_{\!k}$, which satisfy $H_{\Gamma_{\!k}}^\prime Y_k^j=0$.

\medskip
\noindent
Summarizing, the following result has been established.
\begin{theorem}
The following assertions hold for problem \eqref{NS-heat}-\eqref{kinematic}.
\begin{itemize}
\vspace{1mm} \item
[{\bf (a)}] The phase volumes $|\Omega_{1,k}|$ and the total energy ${\sf E}$ are preserved. 
\vspace{1mm} \item
[{\bf (b)}] The negative total entropy $-\Phi$ is a strict Lyapunov functional.
\vspace{1mm}\item
[{\bf (c)}] The {\em non-degenerate equilibria} are zero velocities, constant pressures in the
components of the phases, constant temperature, and the interface is a finite union of non-intersecting spheres which 
do not touch the outer boundary $\partial\Omega$. 
\vspace{1mm} \item
[{\bf (d)}] The set $\cE$ of non-degenerate equilibria forms a real analytic manifold
of dimension $m(n+1)+1$, where $m$ denotes the number of connected components of $\Omega_1$.
\vspace{1mm} \item
[{\bf (e)}] The critical points of the entropy functional
for prescribed phase volumes and prescribed total energy are precisely the equilibria of the system. 
\vspace{1mm} \item
[{\bf (f)}] All critical points  of the entropy functional
for prescribed phase volumes and prescribed total energy are local maxima.
\end{itemize}
\end{theorem}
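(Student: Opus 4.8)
The plan is to read off the six assertions from the conservation identities and variational computations developed in the preceding subsections; the theorem is in essence a summary, so the work is to check that each item follows from one of those computations and to isolate the single point carrying real content. For \textbf{(a)}, conservation of each phase volume comes from the chain $\frac{d}{dt}|\Omega_{1,k}(t)|=\int_{\Gamma_{\!k}}V_\Gamma\,d\Gamma_{\!k}=\int_{\Gamma_{\!k}}(u|\nu_\Gamma)\,d\Gamma_{\!k}=\int_{\Omega_{1,k}}{\rm div}\,u\,dx=0$, combining the kinematic law \eqref{kinematic}, the divergence theorem, and incompressibility; conservation of ${\sf E}$ follows from the integration-by-parts identity for $\frac{d}{dt}{\sf E}$, where the bulk terms cancel against the interfacial terms by the transmission conditions $[\![u]\!]=0$, $[\![d(\theta)\partial_\nu\theta]\!]=0$ and the stress balance $-[\![T\nu_\Gamma]\!]=\sigma H_\Gamma\nu_\Gamma$ of \eqref{NS-heat}.

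For \textbf{(b)} I would invoke the entropy-production identity, which reduces $\frac{d}{dt}\Phi$ to the manifestly nonnegative bulk integral $\int_\Omega\{2\upmu(\theta)|D(u)|_2^2/\theta+d(\theta)|\nabla\theta|^2/\theta^2\}\,dx$, so $-\Phi$ is nonincreasing along solutions. Strictness follows from the rigidity argument: constancy of $\Phi$ on a time interval forces $D(u)\equiv0$ and $\nabla\theta\equiv0$, whence $\theta$ is a positive constant, Korn's inequality together with $[\![u]\!]=0$ gives $\nabla u\equiv0$, and the no-slip condition gives $u\equiv0$, i.e.\ the solution is already at equilibrium. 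For \textbf{(c)}, at such an equilibrium the momentum balance collapses to $\nabla\pi=0$, so the pressure is constant in each bulk component; the jump relation then forces $\sigma H_{\Gamma_{\!k}}=[\![\pi]\!]$ to be constant on each $\Gamma_{\!k}$, and since an embedded closed hypersurface of constant mean curvature bounding a region is a round sphere (Alexandrov), $\Omega_1$ is a union of balls obeying the Young--Laplace law \eqref{Young-Laplace}; imposing non-degeneracy restricts this to a finite, pairwise disjoint family interior to $\Omega$.

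Assertion \textbf{(d)} is handled by the explicit parameterization behind \eqref{def-S}: a neighboring sphere is written as a normal graph $\delta(y_0,\ldots,y_n)$ over a fixed sphere, supplying $n+1$ parameters (the $n$ center coordinates and the radius) per component, hence $m(n+1)$ in total, and the free constant $\theta_*>0$ in \eqref{def-E} contributes the last parameter, giving $\dim\cE=m(n+1)+1$; real analyticity is immediate from the closed form of $\delta$. For \textbf{(e)} I would run the Lagrange-multiplier computation: varying $\vartheta$ alone forces $\theta$ to be a positive constant (using $\kappa(\theta)>0$), varying $v$ forces $u=0$, and varying each $h_k$ separately yields \eqref{lambda-k}, so each $H_{\Gamma_{\!k}}$ is constant; thus the constrained critical points are exactly the equilibria of \textbf{(c)}, and conversely.

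The real content is \textbf{(f)}, and this is where I expect the main difficulty. The plan is to substitute the constraint descriptions \eqref{kM}, \eqref{kE} into the second-variation formula for $-\theta\langle\cD(e)z|z\rangle$ and diagonalize its surface term via $H_{\Gamma_{\!k}}^\prime=(n-1)/R_k^2+\Delta_{\Gamma_{\!k}}$. On the degree-$\ell$ spherical harmonics $H_{\Gamma_{\!k}}^\prime$ has eigenvalue $[(n-1)-\ell(\ell+n-2)]/R_k^2$, positive for $\ell=0$, zero for $\ell=1$, strictly negative for $\ell\ge2$; the volume constraint \eqref{kM} removes the $\ell=0$ mode, so the surface term is nonnegative on the admissible space and, together with the positive bulk terms $\varrho|v|^2$ and $(\varrho\kappa(\theta)/\theta)\vartheta^2$, yields $\langle\cD(e)z|z\rangle\le0$. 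The obstacle is that this is only semi-definiteness: the kernel consists of the variations with $v=0$, $\vartheta=0$ and each $h_k$ a degree-one harmonic $Y_k^j$, for which $H_{\Gamma_{\!k}}^\prime Y_k^j=0$. I would resolve it structurally by identifying these null directions with the infinitesimal rigid translations of the spheres --- precisely the tangent directions of $\cE$ that stay within the level set $\{{\sf E}={\sf E}_0,\ {\sf M}_k={\sf M}_{0,k}\}$ --- along which $\Phi$ is \emph{exactly} constant, translation being a genuine symmetry of both the entropy and the constraints. Since $\Phi$ is flat to all orders along this finite-dimensional neutral subspace while the second variation is strictly negative transverse to it, $e$ is a (non-strict) constrained local maximum. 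The step needing the most care is verifying that the kernel of $\cD(e)$ on the admissible space is exactly this translation subspace and no larger, so that strict negativity genuinely holds in every complementary direction.
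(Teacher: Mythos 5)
Your proposal is correct and follows the paper's own route for all six assertions: (a) is the same divergence-theorem and integration-by-parts computation, (b) the same entropy-production identity plus Korn/no-slip rigidity, (c) the same constancy of $[\![\pi]\!]$ and $H_{\Gamma_{\!k}}$ (you make explicit the appeal to Alexandrov's theorem, which the paper leaves implicit), (d) the same explicit normal-graph parameterization of nearby spheres plus the free constant $\theta_*$, and (e) the same Lagrange-multiplier computation leading to \eqref{lambda-k}. The one point of genuine divergence is (f), and there your treatment is in fact more careful than the paper's text. The paper computes the same second variation and asserts that $\langle \cD(e)z|z\rangle$ is ``negative definite,'' even though $H_{\Gamma_{\!k}}^\prime$ annihilates the degree-one spherical harmonics $Y_k^j$, so on the constrained space the form is only negative \emph{semi}-definite; the paper acknowledges this only in the subsequent remark about equilibria not being isolated, without reconciling it with the definiteness claim. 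You identify this gap and close it structurally: the kernel directions $(0,0,Y_k^j)$ are exactly the infinitesimal translations of the spheres, i.e.\ tangent directions of $\cE$ inside the level set $\{{\sf E}={\sf E}_0,\,{\sf M}_k={\sf M}_{0,k}\}$, along which $\Phi$ is exactly constant by translation invariance, so strict negativity transverse to this neutral manifold yields a (non-strict) local maximum. This is the right repair, and your eigenvalue bookkeeping $[(n-1)-\ell(\ell+n-2)]/R_k^2$ with the $\ell=0$ mode removed by \eqref{kM} correctly locates where the volume constraint is essential. Your closing caveat is well placed: the remaining verification that the kernel of $\cD(e)$ on the admissible space is precisely the translation subspace follows from the harmonics decomposition you set up (only $\ell=1$ modes survive once $v=0$, $\vartheta=0$, and $\ell=0$ is excluded by \eqref{kM}), and with that in hand your argument is complete and slightly sharper than the paper's.
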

\section{Linear stability}\label{linear stability of equilibria}
By employing the Hanzawa transformation, see for instance \cite{PrSi16},
one shows that the pertinent linear problem associated to \eqref{NS-heat}-\eqref{kinematic} 
at an equilibrium $(0,\theta_*,\Sigma)$, with 
$\Sigma=\bigcup_{k=1}^m\Sigma_k$ and $\Sigma_k=\partial B(x_k,R_k)$, is given by

\begin{equation}
\begin{aligned}\label{lin-u}
\varrho\partial_t u-\upmu_* \Delta u +\nabla\pi &=\varrho f_u && \mbox{in} && \Omega\setminus{\Sigma},\\
{\rm div}\, u &=g_d && \mbox{in} && \Omega\setminus{\Sigma},\\
u&=0\quad && \mbox{on} && \partial\Omega,\\
[\![u]\!] &= 0      && \mbox{on} && {\Sigma},\\
 -[\![T_*\nu_\Sigma]\!] +\sigma (\cA_\Sigma h)\nu_\Sigma &=g_u && \mbox{on} && {\Sigma},\\
u(0)&=u_0 &&\mbox{in} && \Omega,
\end{aligned}
\end{equation}

\begin{equation}
\begin{aligned}\label{lin-theta}
\hspace{1.4cm}\varrho\kappa_*\partial_t \vartheta -d_*\Delta \vartheta &= \varrho \kappa_* f_\theta && \mbox{in} && \Omega\setminus{\Sigma},\\
\partial_\nu\vartheta&=0 && \mbox{on} && \partial\Omega,\\
[\![\vartheta]\!]&=0 && \mbox{on} && {\Sigma},\\
-[\![d_*\partial_\nu \vartheta]\!]&= g_\theta && \mbox{on} && {\Sigma},\\
\vartheta(0)&=\vartheta_0  && \mbox{in} && \Omega\setminus\Sigma,
\end{aligned}
\end{equation}

\begin{equation}
\begin{aligned}\label{lin-h}
\hspace{.7cm}
\partial_t h- (u|\nu_\Sigma ) &= f_h  &&\mbox{on} && \Sigma,\\
h(0)&=h_0 &&\mbox{on} && \Sigma,\\
\end{aligned}
\end{equation}
where $\vartheta=\theta -\theta_*$ is the relative temperature, $\upmu_*=\upmu(\theta_*)$, 
$\kappa_*=\kappa(\theta_*)$, $d_*= d(\theta_*)$, 
$T_*=2\upmu_*D(u)-\pi I,$
and $h$ is the height function used to parameterize $\Gamma(t)$ over the reference manifold 
$\Sigma$ by means of
$\Gamma(t)=\{q+h(t,q)\nu_\Sigma(p): q\in\Sigma,\; t\ge 0\}.$
Finally, 
the linear operator $\cA_\Sigma$ is given by
$$\cA_\Sigma\big|_{\Sigma_k}=\cA_{\Sigma_k}  = -H_{\Sigma_k}^\prime(0) = -\frac{n-1}{R^2_k} -\Delta_{\Sigma_k}.$$
It is well-known that $\cA_{\Sigma_k}$ is self-adjoint, positive semi-definite
on functions with zero mean, and has compact resolvent in $L_2(\Sigma_k)$.
$\lambda_0=0$ is an eigenvalue with eigenspace of dimension $n$, spanned by the
spherical harmonics of degree one.
$\lambda_{-1}=-(n-1)/R^2_k$ is also an eigenvalue, its eigenspace is one-dimensional
and consists of the constants.

In order to introduce a functional analytic setting for the linear problem~\eqref{lin-u}-\eqref{lin-h}
let
$$X_0=L_{p,\sigma}(\Omega)\times L_p(\Omega)\times W^{2-1/p}_p(\Sigma),$$
 where the subscript $\sigma$ means solenoidal, and define the operator $L$ by
$$ L(u,\vartheta,h)= \big(-(\upmu_*/\varrho)\Delta u +\nabla\pi/\varrho, -(d_*/\varrho\kappa_*)\Delta \vartheta, 
-(u|\nu_\Sigma)\big).$$
The domain $X_1:={\sf D}(L)$ of $L$ is given by
\begin{equation*}
\begin{aligned}
{\sf D}(L)&=\{(u,\vartheta,h)\in H^2_p(\Omega\setminus\Sigma)^{n+1}\times W^{3-1/p}_p(\Sigma): {\rm div}\, u=0\; \mbox{ in }\; \Omega\setminus\Sigma,\\ 
&\hspace{0.7cm} u,\partial_\nu\vartheta=0\;\mbox{ on }\;\partial\Omega,\;\;
[\![u]\!],[\![\vartheta]\!],\cP_\Sigma[\![2\upmu_*  D(u)\nu_\Sigma]\!],[\![d_*\partial_\nu\vartheta]\!]=0\;\:\mbox{on}\:\; \Sigma
\},
\end{aligned}
\end{equation*}
where $\cP_\Sigma$ denotes the orthogonal projection onto the tangent space of $\Sigma$.
Here $\pi\in \dot H^1_p(\Omega\setminus\Sigma)$ is determined 
by means of the weak transmission problem
\begin{equation*}
\begin{aligned}
(\varrho^{-1}\nabla\pi|\nabla\phi)_{L_2(\Omega)} 
&=(\varrho^{-1}\upmu_*\Delta u|\nabla \phi)_{L_2(\Omega)},\quad \phi\in {H}^1_{p^\prime}(\Omega),\\
[\![\pi]\!] &=-\sigma \cA_\Sigma h + ([\![2\upmu_* D(u)\nu_\Sigma ]\!]\,|\nu_\Sigma)\quad \mbox{ on } \Sigma.
\end{aligned}
\end{equation*}
Setting $z=(u,\vartheta,h)$, $f=(f_u,f_\theta,f_h)$,  
and $g=(g_d, g_u,g_\theta)$ one concludes that 
the linear problem~\eqref{lin-u}-\eqref{lin-h} can be rewritten as a Cauchy problem
$$\dot z + Lz=f(t),\quad z(0)=z_0,$$
provided $g=0$.
Associated with the operator $L$ is the eigenvalue problem
\begin{equation}
\begin{aligned}\label{ev-lin-u}
\varrho\lambda  u-\upmu_* \Delta u +\nabla\pi &= 0 && \mbox{in} && \Omega\setminus{\Sigma},\\
{\rm div}\, u &= 0 && \mbox{in} && \Omega\setminus{\Sigma},\\
u&=0  && \mbox{on} && \partial\Omega,\\
[\![u]\!] &= 0      && \mbox{on} && {\Sigma},\\
 -[\![T_*\nu_\Sigma]\!] +\sigma (\cA_\Sigma h)\nu_\Sigma &= 0 && \mbox{on} && {\Sigma},\\
\lambda  h- (u|\nu_\Sigma) &= 0 &&  \mbox{on} && {\Sigma},\\
\end{aligned}
\end{equation}
and
\begin{equation}
\begin{aligned}\label{ev-lin-theta}
\hspace{1.2cm}\varrho\kappa_*\lambda \vartheta -d_*\Delta \vartheta &=0 && \mbox{in} && \Omega\setminus{\Sigma},\\
\partial_\nu\vartheta&=0 && \mbox{on} && \partial\Omega,\\
[\![\vartheta]\!]&=0 && \mbox{on} && {\Sigma},\\
-[\![d_*\partial_\nu \vartheta]\!]&= 0 && \mbox{on} && {\Sigma}.\\
\end{aligned}
\end{equation}
Note that the eigenvalue problems \eqref{ev-lin-u} and \eqref{ev-lin-theta}
decouple.
\begin{theorem}
\label{linstab-heat} 
Let $e_*\in\cE$ be an equilibrium. Then $L$ has the following properties.
\begin{itemize}
\vspace{1mm}
\item[{\bf (a)}]
$-L$ generates a compact, analytic $C_0$-semigroup in $X_0$ which has 
the property of maximal $L_p$-regularity. 
\vspace{1mm}
\item[{\bf (b)}]
The spectrum of $L$ consists of countably many eigenvalues of finite algebraic multiplicity. 
\vspace{1mm}
\item[{\bf (c)}] $-L$ has no eigenvalues $\lambda\neq0$ with nonnegative real part. 
\vspace{1mm}
\item[{\bf (d)}] $\lambda=0$ is a semi-simple eigenvalue of $L$ of multiplicity $m(n+1)+1$. 
\vspace{1mm}
\item[{\bf (e)}] The kernel ${\sf N}(L)$ of $L$ is isomorphic to the tangent space $T_{e_*}\cE$ of 
$\cE$ at $e_*$.
\end{itemize}
Hence, the equilibrium $e_*\in\cE$ is normally stable.
\end{theorem}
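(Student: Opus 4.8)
The plan is to establish (a)--(e) in turn and then read off normal stability from its definition. For (a), I would use that the eigenvalue problems \eqref{ev-lin-u} and \eqref{ev-lin-theta} decouple, so that $L$ splits into a two-phase Stokes operator coupled to the surface height $h$ and an independent heat-transmission operator for $\vartheta$; each block is known to generate an analytic $C_0$-semigroup with maximal $L_p$-regularity on the bounded domain $\Omega$, and I would invoke the corresponding results in \cite{PrSi16}. Compactness of the semigroup is then immediate once one checks that the embedding $X_1={\sf D}(L)\hookrightarrow X_0$ is compact, which holds because $\Omega$ is bounded and the regularity indices strictly drop. This same compact embedding yields compactness of the resolvent, and hence (b): the spectrum of $L$ is a discrete set of eigenvalues of finite algebraic multiplicity.

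For (c) I would exploit the decoupling. Testing the temperature equation $\varrho\kappa_*\lambda\vartheta-d_*\Delta\vartheta=0$ with $\bar\vartheta$ and integrating by parts --- the interface terms cancelling since $[\![\vartheta]\!]=0$ and $[\![d_*\partial_\nu\vartheta]\!]=0$ --- gives $\lambda\int_\Omega\varrho\kappa_*|\vartheta|^2+\int_\Omega d_*|\nabla\vartheta|^2=0$, so every temperature eigenvalue is real and nonpositive, with $\lambda=0$ forcing $\vartheta$ constant. For the velocity block, testing $\varrho\lambda u={\rm div}\,T_*$ with $\bar u$, using ${\rm div}\,u=0$, $u|_{\partial\Omega}=0$, the stress condition $[\![T_*\nu_\Sigma]\!]=\sigma(\cA_\Sigma h)\nu_\Sigma$, and the kinematic relation $(u|\nu_\Sigma)=\lambda h$, produces $\varrho\lambda\|u\|_{L_2}^2+\sigma\bar\lambda(\cA_\Sigma h\,|\,h)_{L_2(\Sigma)}=-2\upmu_*\|D(u)\|_{L_2}^2$. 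The decisive observation is that for $\lambda\neq0$ the linearized volume constraint holds: integrating $(u|\nu_\Sigma)=\lambda h$ over $\Sigma_k$ and applying the divergence theorem gives $\lambda\int_{\Sigma_k}h\,d\Sigma=\int_{B(x_k,R_k)}{\rm div}\,u\,dx=0$, so $h$ has zero mean on each component and therefore $(\cA_\Sigma h\,|\,h)_{L_2}\ge0$. Taking real parts yields ${\rm Re}\,\lambda\,[\varrho\|u\|_{L_2}^2+\sigma(\cA_\Sigma h|h)_{L_2}]=-2\upmu_*\|D(u)\|_{L_2}^2$; if ${\rm Re}\,\lambda\ge0$ both sides vanish, so $D(u)=0$, whence $u=0$ by Korn's inequality and the no-slip condition, and then $\lambda h=0$ forces $h=0$, contradicting nontriviality of the eigenfunction. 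This proves (c).

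For (d) and (e) I would first compute ${\sf N}(L)$ by setting $\lambda=0$: the temperature identity forces $\vartheta$ constant ($1$ dimension), while the velocity identity forces $D(u)=0$ and hence $u=0$, after which $\nabla\pi=0$ and the stress condition give $[\![\pi]\!]=-\sigma\cA_\Sigma h$ constant on each $\Sigma_k$, so $\cA_\Sigma h$ is constant there and $h|_{\Sigma_k}\in{\rm span}\{1,Y_k^1,\dots,Y_k^n\}$ (using that $\cA_{\Sigma_k}$ annihilates the degree-one harmonics and is nonzero otherwise). This gives $\dim{\sf N}(L)=m(n+1)+1$, with a basis in one-to-one correspondence with the infinitesimal generators of $\cE$ --- the constant $\vartheta$-mode with varying $\theta_*$, the constant $h$-modes with varying radii $R_k$, and the $Y_k^j$-modes with translating the centers $x_k$ --- matching exactly the parameterization of $\cS$ in \eqref{def-S}; this identifies ${\sf N}(L)$ with $T_{e_*}\cE$ and proves (e). For semi-simplicity I would show ${\sf N}(L^2)={\sf N}(L)$: given $Lz=z_*\in{\sf N}(L)$, the temperature component integrates to $c_\vartheta\int_\Omega\varrho\kappa_*=0$, so $\vartheta_*=0$; for the velocity-height part, integrating $(u|\nu_\Sigma)=-h_*$ over $\Sigma_k$ shows $\int_{\Sigma_k}h_*=0$, which kills the constant part of $h_*\in{\rm span}\{1,Y_k^j\}$ and leaves $h_*\in{\rm span}\{Y_k^j\}$, so $\cA_\Sigma h_*=0$; feeding this into the $\lambda=0$ energy identity $2\upmu_*\|D(u)\|_{L_2}^2=\sigma(\cA_\Sigma h\,|\,h_*)_{L_2}=\sigma(h\,|\,\cA_\Sigma h_*)_{L_2}=0$ forces $u=0$ and then $h_*=0$. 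Hence $z_*=0$ and $z\in{\sf N}(L)$.

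I expect the main obstacle to be the spectral bound (c) together with the semi-simplicity in (d): the crux is recognizing that the linearized volume constraint $\int_{\Sigma_k}h=0$ is precisely what rescues the sign of the surface term $(\cA_\Sigma h|h)_{L_2}$, since $\cA_\Sigma$ is only positive semi-definite after its negative (constant) direction is removed. The analogous subtlety reappears in the semi-simplicity argument, where one must observe that any $h_*$ lying in both the kernel and the range of $L$ has vanishing mean, hence no constant component, so that $\cA_\Sigma h_*=0$ and the cross term drops out. Once these sign and orthogonality structures are in place, normal stability follows directly: (a) supplies the compact analytic semigroup with maximal regularity, (b) the discrete spectrum, (c) that $\sigma(L)\setminus\{0\}$ lies strictly in the right half-plane, and (d)--(e) that $0$ is semi-simple with eigenspace equal to $T_{e_*}\cE$, which is exactly the definition of a normally stable equilibrium.
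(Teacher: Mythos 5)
Your proposal is correct and follows essentially the same route as the paper's proof: the same energy identities, the divergence-theorem argument giving $\int_{\Sigma_k}h\,d\Sigma_k=0$, positive semi-definiteness of $\cA_\Sigma$ on mean-zero functions combined with Korn's inequality and the no-slip condition, the explicit kernel basis (spherical harmonics plus the constant temperature mode), semi-simplicity via ${\sf N}(L^2)={\sf N}(L)$, and the dimension count against $T_{e_*}\cE$ for (e). The only minor and legitimate deviation is in (c), where you apply the mean-zero constraint to every eigenvalue $\lambda\neq 0$ so that the real-part identity alone rules out ${\rm Re}\,\lambda\ge 0$, whereas the paper first takes imaginary parts to force such eigenvalues to be real and only then uses the mean-zero argument to exclude $\lambda>0$ --- your version compresses these two steps into one.
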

\begin{proof}
(a) It follows from the results in \cite{PrSi16} that
$-L$ generates a compact, analytic, strongly continuous semigroup in $X_0$ which has maximal $L_p$-regularity.

\medskip
\noindent
(b)
As $L$ has compact resolvent,
the spectrum of $L$ consists entirely of eigenvalues of finite algebraic multiplicity.

\medskip
\noindent
(c) 
Suppose that $\lambda\neq 0$ with ${\rm Re}\; \lambda\geq0$ is an eigenvalue of $-L$
with eigenfunction $(u,\vartheta,h)$.
Since \eqref{ev-lin-u}-\eqref{ev-lin-theta} decouple, 
either $(u,h)\neq (0,0)$ or $\vartheta\neq 0$.

Let us first assume that $(u,h)\neq (0,0)$. 
Taking the $L_2$-inner product of the equation for $u$ with $u$ and integrating by parts yields
\begin{equation}\label{evid-1}
\begin{aligned}
0&=\lambda |\varrho^{1/2}u|^2_{L_2(\Omega)} -({\rm div}\; T_*|u)_{L_2(\Omega)}\\
&=\lambda |\varrho^{1/2}u|^2_{L_2(\Omega)} + 2\int_\Omega \mu_* |D(u)|^2_2\,dx
 -\int_{\Omega}{\rm div}\,(T_*\bar u)\,dx\\
&=\lambda |\varrho^{1/2}u|^2_{L_2(\Omega)}+ 2|\upmu_*^{1/2}D(u)|^2_{L_2(\Omega)} 
+\int_\Sigma([\![T_*\nu_\Sigma]\!]| u )\,dx \\
&=\lambda |\varrho^{1/2}u|^2_{L_2(\Omega)} + 2|\upmu_*^{1/2}D(u)|^2_{L_2(\Omega)} 
+\sigma\bar{\lambda} (\cA_\Sigma h|h)_{L_2(\Sigma)},
\end{aligned}
\end{equation}
where the relations 
$[\![u]\!]=0$, $[\![T_*\nu_\Sigma]\!]=\sigma \cA_\Sigma h\nu_\Sigma$ and 
$(u |\nu_\Sigma) = \lambda h$
are employed.
Taking the real part yields the identity
\begin{equation}
\begin{aligned}\label{evid-2}
0&={\rm Re}\,\lambda |\varrho^{1/2}u|^2_{L_2(\Omega)}+ 2|\upmu_*^{1/2}D(u)|^2_{L_2(\Omega)}
+\sigma {\rm Re}\,\lambda (\cA_\Sigma h|h)_{L_2(\Sigma)}.
\end{aligned}
\end{equation}
On the other hand, if ${\rm Im}\,\lambda\neq 0$, taking the imaginary part results in
$$ \sigma (\cA_\Sigma h|h)_{L_2(\Sigma)} =  |\varrho^{1/2}u|^2_{L_2(\Omega)}. $$
Substituting this expression into \eqref{evid-2} leads to
\begin{equation*}
0 = 2{\rm Re}\,\lambda |\varrho^{1/2}u|^2_{L_2(\Omega)} + 2|\upmu_*^{1/2}D(u)|^2_{L_2(\Omega)},
\end{equation*}
which shows that if ${\rm Re}\,\lambda \ge 0$ is an eigenvalue of $-L$, then it must be real.
In fact, the last identity shows that $D(u)=0$ and hence  $\nabla u=0$ by Korn's inequality.
Therefore, $u$ is constant on $\Omega$, and  hence  $u\equiv 0$ by the no-slip boundary condition on $\partial\Omega$.
The condition $\lambda h=(u|\nu_\Sigma)$ then yields $h=0$,
a contradiction to the assumption $(u,h)\neq (0,0)$.

In a next step it will be shown that \eqref{evid-1} does not have eigenvalues with  $\lambda>0$.
Assume to the contrary that \eqref{evid-1} has a nontrivial solution $(u,h)$ for $\lambda>0$.
Let $\Omega_{1,k}$ denote the components of $\Omega_1$ and set $\Sigma_k=\partial\Omega_{1,k}$. 
By the divergence theorem  
$$ 0 = \int_{\Omega_{1,k}} {\rm div}\,u\,dx = \int_{\Sigma_k} (u|\nu_\Sigma) \,d\Sigma_k
= \lambda\int_{\Sigma_k} h\,d\Sigma_k.$$
\goodbreak
This shows that the  mean values of $h$ vanish for all components of $\Sigma$.
As $\cA_\Sigma$ is positive semi-definite on functions which have mean value zero for each component of $\Sigma$, 
\eqref{evid-2} implies $\lambda=0$. Hence there are no eigenvalues with nonnegative real part. 

\medskip
Concerning the eigenvalue problem \eqref{ev-lin-theta}
one obtains, after taking the $L_2$-inner product of the first line with  $\vartheta$, 
integrating by parts, and emplying the condition $[\![d_*\partial_\nu\vartheta]\!]=0$,
the following relation
\begin{equation}
\label{evid-3}
0= \lambda|(\varrho\kappa_*)^{1/2}\vartheta|^2_{L_2(\Omega)} + |d_*^{1/2}\nabla\vartheta|^2_{L_2(\Omega)}.
\end{equation}
This readily shows that all eigenvalues of \eqref{ev-lin-theta} are real and non-positive.

\medskip
\noindent
(d)  Suppose $\lambda=0$. Then \eqref{evid-2} and \eqref{evid-3} yield 
$$|\upmu_*^{1/2}D(u)|^2_{L_2(\Omega)}=|d_*^{1/2}\nabla\vartheta|^2_{L_2(\Omega)}=0,$$
hence $\vartheta$ is constant and $D(u)=0$, and then $u=0$ by  Korn's inequality 
and the no-slip condition $u=0$ on $\partial\Omega$.
This implies further that the pressures are constant in the components of the phases. 
From the relation $[\![\pi]\!]\big|_{\Sigma_k}=-\sigma\cA_{\Sigma_k} h_k $
one concludes that $\cA_{\Sigma_k} h_k$ is constant for $k=1,\cdots,m$, where $h_k=h|_{\Sigma_k}$.

The kernel of the linearization $L$ is spanned by $e_\theta=(0,1,0)$, $e_{jk}=(0,0,Y_k^j)$, 
with $Y_k^j$ the spherical harmonics  of degree one for the spheres $\Sigma_k$, $j=1,\cdots,n$, $k=1,\cdots,m$, 
and $e_{0k}=(0,0,Y_k^0)$, where $Y_k^0$ equals one on $\Sigma_k$ and zero elsewhere.
Hence the dimension of the null space ${\sf N}(L)$ is $m(n+1)+1$. 

\medskip

Next it will be shown  that $\lambda=0$ is semi-simple. 
So suppose  $(u,\vartheta,h)$ is a solution of $L(u,\vartheta,h)= \sum_{j,k}\alpha_{jk}e_{jk} + \sum_k\beta_k e_{0k} +\gamma e_\theta$. 
This means
\begin{equation}
\begin{aligned}\label{simple-evu}
-\upmu_* \Delta u +\nabla\pi &= 0 && \mbox{in} && \Omega\setminus{\Sigma},\\
{\rm div}\, u &= 0 && \mbox{in} && \Omega\setminus{\Sigma},\\
u&=0  && \mbox{on} && \partial\Omega,\\
[\![u]\!] &= 0      && \mbox{on} && {\Sigma},\\
 -[\![T_*\nu_\Sigma]\!] +\sigma (\cA_\Sigma h)\nu_\Sigma &= 0 && \mbox{on} && {\Sigma},\\
-(u|\nu_\Sigma) &= \sum_{j,k}\alpha_{jk}Y_k^j+\sum_k\beta_k Y^0_k &&  \mbox{on} && {\Sigma},\\
\end{aligned}
\end{equation}
and
\begin{equation}
\begin{aligned}\label{simple-evtheta}
\phantom{}
\hspace{-5mm}-d_*\Delta \vartheta &=\varrho\kappa_*\gamma && 
\mbox{in} && \Omega\setminus{\Sigma},\\
\partial_\nu\vartheta&=0 && \mbox{on} && \partial\Omega,\\
[\![\vartheta]\!]&=0 && \mbox{on} && {\Sigma},\\
-[\![d_*\partial_\nu \vartheta]\!]&= 0 && \mbox{on} && {\Sigma}.\\
\end{aligned}
\end{equation}
It is to be shown that $(\alpha_{jk},\beta_k,\gamma)=0$ for all $j,k$.
Integrating the divergence equation for $u$ over $\Omega_{1,k}$ yields
$$
0=\int_{\Omega_{1,k}} {\rm div}\,u\,dx = \int_{\Sigma_k} (u|\nu_\Sigma)\,d\Sigma_k =
- \sum_k \beta_k  \int_{\Sigma_k} Y_k^0 \,d\Sigma_k = -\beta_k |\Sigma_k|,$$
where the property that the spherical harmonics have mean value zero is employed.
Therefore, $\beta_k=0$ for $k=1,\cdots,m$.

Taking the $L_2$-inner product of the equation for $u$ with $u$, one obtains as in \eqref{evid-1}
$$|\upmu_*^{1/2}D(u)|^2_{L_2(\Omega)}=0.$$
This implies $D(u)=0$, hence $u=0$ by Korn's inequality and the no-slip boundary condition on $\partial\Omega$.
This, in turn, yields
$$ 0= -(u|\nu_\Sigma) = \sum_{j,k}\alpha_{jk}Y_k^j.$$
Thus $\alpha_{jk}=0$ for all $j,k$, as the spherical harmonics $Y_k^j$ are linearly independent.
Finally, integrating the equation for $\vartheta$ yields
$$
\gamma (\kappa_*|\varrho)_{L_2(\Omega)}=  - \int_\Omega d_*\Delta \vartheta \,dx 
=  \int_\Sigma [\![d_*\partial_\nu \vartheta]\!] \,d\Sigma 
-\int_{\partial\Omega} d_*\partial_\nu \vartheta\,d(\partial\Omega) =0,
$$
and hence $\gamma =0$. Therefore, the eigenvalue $\lambda=0$ is semi-simple.

\medskip
\noindent
(e) The assertion follows from  the fact that $T_{e_*}\cE\subset {\sf N}(L)$ and the relation
$${\rm dim}\,{\sf N}(L)={\rm dim}\,T_{e_*}\cE=m(n+1)+1.$$
\end{proof}
\section{Nonlinear stability of equilibria}
Suppose  $e_*=(0,\theta_*,\Sigma)\in\cE$ is a non-degenerate fixed equilibrium.
Choosing $\Sigma$ as reference manifold and employing the Hanzawa transformation
to problem \eqref{NS-heat}-\eqref{kinematic} one obtains the nonlinear system
\begin{equation}
\begin{aligned}\label{nonlin-u}
\varrho\partial_t u-\upmu_* \Delta u +\nabla\pi &=F_u(u,\vartheta,h,\pi) && \mbox{in} && \Omega\setminus{\Sigma},\\
{\rm div}\, u &=G_d(u,h) && \mbox{in} && \Omega\setminus{\Sigma},\\
u&=0\quad && \mbox{on} && \partial\Omega,\\
[\![u]\!] &= 0      && \mbox{on} && {\Sigma},\\
 -\cP_\Sigma[\![2\upmu_*D(u)\nu_\Sigma ]\!] &=G_\tau (u,\vartheta,h) && \mbox{on} && {\Sigma},\\
 -([\![2\upmu_* D(u)\nu_\Sigma)]\!]\,|\nu_\Sigma)+[\![\pi ]\!]+\sigma \cA_\Sigma h &=G_\nu(u,\vartheta,h) && \mbox{on} && {\Sigma},\\
u(0)&=u_0 &&\mbox{in} && \Omega,
\end{aligned}
\end{equation}

\begin{equation}
\begin{aligned}\label{nonlin-theta}
\hspace{2cm}
\varrho\kappa_*\partial_t \vartheta -d_*\Delta \vartheta &= F_\theta(u,\vartheta,h)
 && \mbox{in} && \Omega\setminus{\Sigma},\\
\partial_\nu\vartheta&=0 && \mbox{on} && \partial\Omega,\\
[\![\vartheta]\!]&=0 && \mbox{on} && {\Sigma},\\
-[\![d_*\partial_\nu \vartheta]\!]&= G_\theta(\vartheta,h) && \mbox{on} && {\Sigma},\\
\vartheta(0)&=\vartheta_0  && \mbox{in} && \Omega\setminus\Sigma,
\end{aligned}
\end{equation}
and
\begin{equation}
\begin{aligned}\label{nonlin-h}
\hspace{1.5cm}
\partial_t h-(u|\nu_\Sigma) &= F_h(u,h)
&&\mbox{on} && \Sigma,\\
h(0)&=h_0 &&\mbox{on} && \Sigma, \\
\end{aligned}
\end{equation}
where $\vartheta$,
$\kappa_*$, $\upmu_*$, $d_*$, and 
$\cA_\Sigma$ have the same meaning as in Section 3.
The precise expressions for the nonlinearities will not be listed here,
and the reader is referred to \cite[Chapter 9]{PrSi16}
(and also to \cite{KPW13} for the isothermal case).
It suffices to point out that the nonlinearities are $C^1$ in all variables
and vanish, together with their first order derivatives, at $(u,\vartheta,h,\pi)=(0,0,0,c)$,
where $c$ is constant in the phase components.

To obtain an abstract formulation of problem \eqref{nonlin-u}-\eqref{nonlin-h}, 
we choose as the principal system variable $z=(u,\vartheta,h)$.
The regularity space for $z$ is
\begin{equation*}
\begin{aligned}
\EE(a) := \! \big\{(u,\vartheta,h)\in\EE_{u}(a)\times \EE_{\theta}(a)\times \EE_{h}(a)\!: 
  u,\partial_\nu\vartheta =0 \;\mbox{on}\;\partial\Omega,\;
 [\![u]\!], [\![\vartheta]\!]=0 \;\mbox{on}\; \Sigma\big\},
\end{aligned}
\end{equation*}
where 
\begin{align*}
& \EE_{u}(a)=\EE_{\theta}(a)^n, \quad \EE_{\theta}(a)= H^1_{p}(J;L_p(\Omega))\cap L_{p}(J;H^2_p(\Omega\setminus\Sigma)),\\
&\EE_{h}(a) = W^{2-1/2p}_{p}(J;L_p(\Sigma))\cap H^1_{p}(J;W^{2-1/p}_p(\Sigma))\cap L_{p}(J;W^{3-1/p}_p(\Sigma))
\end{align*}
and $J=(0,a)$.
Here the regularity for the height function $h$ 
is accounted for as follows.
Asserting that 
$u\in H^1_{p}(J;L_p(\Omega))\cap L_{p}(J;H^2_p(\Omega\setminus\Sigma))$ one obtains by trace theory
\begin{equation*}
\begin{aligned}
&([\![2\upmu_* D(u)\nu_\Sigma)]\!]\,|\nu_\Sigma)\in  W^{1/2-1/2p}_p(J;L_p(\Sigma))\cap L_p(J;W^{1-1/p}_p(\Sigma)),\\
&(u|\nu_\Sigma)\in W^{1-1/2p}_p(J;L_p(\Sigma))\cap L_p(J;W^{2-1/p}_p(\Sigma)).
\end{aligned}
\end{equation*}
Requiring that the function $h$ in \eqref{nonlin-u}, \eqref{nonlin-h} has the best possible regularity then amounts to
\begin{equation*}
\begin{aligned}
&\Delta_\Sigma h \in  W^{1/2-1/2p}_p(J;L_p(\Sigma))\cap L_p(J;W^{1-1/p}_p(\Sigma)), \\
&\partial_t h\in W^{1-1/2p}_p(J;L_p(\Sigma))\cap L_p(J;W^{2-1/p}_p(\Sigma)),
\end{aligned}
\end{equation*}
which in turn results in $h\in \EE_h(a)$, as $\EE_h(a)$ embeds into $W^{1/2-1/2p}_p(J;H^2_p(\Sigma))$.   
By the same reasoning one also has $[\![\pi]\!]\in W^{1/2-1/2p}_p(J;L_p(\Sigma))\cap L_p(J;W^{1-1/p}_p(\Sigma))$. 
\\

The trace space  $X_\gamma$  of $\EE(a)$ is given by
\begin{equation*}
\begin{aligned}
 X_\gamma &=\big\{ (u,\vartheta,h)\in W^{2-2/p}_p(\Omega\setminus\Sigma)^{n+1}\times  W^{3-2/p}_p(\Sigma): 
           u, \partial_\nu\vartheta=0 \mbox{ on }\partial\Omega \\
        &\hspace{7.5cm} \;[\![u]\!],[\![\vartheta]\!]=0 \mbox{ on } \Sigma\big\}.
\end{aligned}
\end{equation*}
Finally, let
\begin{equation*}
\label{hat-EE,hat-X}
\begin{aligned}
&\hat \EE(a)   =\{w=(z,\pi): z\in\EE(a),\; \pi\in L_p(J;\dot H^1_p(\Omega\setminus\Sigma))\}, \\
&\hat X_\gamma =\{w=(z,\pi): z\in X_\gamma,\; \pi\in \dot W^{1-2/p}_p(\Omega\setminus\Sigma),\ [\![ \pi]\!]\in W^{1-3/p}_p(\Sigma)\}, 
\end{aligned}
\end{equation*}
where $\dot H^1_p(\Omega\setminus\Sigma)$ and $\dot W^{1-2/p}_p(\Omega\setminus\Sigma )$ denote corresponding homogeneous spaces.

\subsection{The tangent space at equilibria}
\noindent
In this subsection, the structure of the state manifold $\cSM$ in a neighborhood of a fixed equilibrium $e_*=(0,\theta_*,\Sigma)$
will be studied. Towards this objective, observe that
near $e_*$, the state manifold is described by
\begin{align*}
\cSM_*&=\big\{ (u,\vartheta,h)\in X_\gamma:\,  {\rm div}\, u= G_d(u,h) \mbox{ in } \Omega\setminus\Sigma,\\
       & \qquad -\cP_\Sigma[\![2\upmu_*D(u)\nu_\Sigma]\!]= G_\tau(u,\vartheta,h),
       \; -[\![d_*\partial_\nu\vartheta]\!]= G_\theta(\vartheta,h) \mbox{ on } \Sigma\big\}.
\end{align*}
Associated to $\cSM_*$ is the linear subspace
\begin{align*}
\cSX_*&=\big\{ (u,\vartheta,h)\in X_\gamma:\,  {\rm div}\, u=0\mbox{ in } \Omega\setminus\Sigma,\\
        &\qquad \cP_\Sigma[\![\upmu_*D(u)\nu_\Sigma]\!]= 0,\, 
        [\![d_*\partial_\nu\vartheta]\!]= 0 \mbox{ on } \Sigma \big\};
\end{align*}
the boundary trace space
$$Y_\gamma= W^{1-2/p}_{p,0}(\Omega\setminus\Sigma)\times W^{1-3/p}_p(\Sigma;T\Sigma)\times W^{1-3/p}_p(\Sigma),$$ 
where
$$ W^{1-2/p}_{p,0}(\Omega\setminus\Sigma):= 
\{ v\in W^{1-2/p}_p(\Omega\setminus\Gamma):\, \int_{\Omega \setminus\Sigma} v\,dx =0\},$$
with $T\Sigma$ the tangent bundle of $\Sigma$;
the linear stationary boundary operator
$${\sf B}  z = ({\rm div}\, u, -\cP_\Sigma[\![2\upmu_* D(u)\nu_\Sigma]\!], 
-[\![d_*\partial_\nu\vartheta]\!]),$$
and the stationary boundary nonlinearity
$$ {\sf G }(z) =(G_d(u,h), G_\tau(u,\vartheta,h), G_\theta(\vartheta, h)).$$
With this notation one has
\begin{equation}
\begin{aligned}\label{AB}
\cSM_*  & = \{ z\in X_\gamma:\, {\sf B} z ={\sf G}(z) \mbox{ in } Y_\gamma\}, \\
 \cSX_* & = \{ z\in X_\gamma:\, {\sf B} z =0 \mbox{ in } Y_\gamma\}.
\end{aligned}
\end{equation}
This structure will now be employed to parameterize $\cSM_*$ over $\cSX_*$ near $(0,0,0)$. 
This shows, in particular, that $\cSX_*$ is isomorphic to the tangent space of $\cSM_*$ at $(0,0,0)$, or equivalently, to the tangent space of $\cSM$ at $e_*$. 

It will be convenient to enlarge the system variable $z$ by the pressure, 
setting $w=(z,\pi)$, where $\pi \in \dot{W}^{1-2/p}_p(\Omega\setminus\Sigma)$ and $[\![\pi]\!]\in W^{1-3/p}_p(\Sigma)$.
Furthermore, including the normal component of the normal stress balance
$$ 
-([\![2\upmu_* D(u)\nu_\Sigma)]\!]\,|\nu_\Sigma) + [\![\pi]\!] +\sigma \cA_\Sigma h =G_\nu(u,\vartheta,h)
$$
leads to the extended operators  $\hat{\sf B}$ and $\hat{\sf G}$. 
Is is worthwhile to point out that here the pressure appears only linearly,
i.e., it does not appear in $\hat{\sf G}$. 

The differential operator ${\sf A}$ is defined by the expression for the operator $L$ 
introduced in Section~3. 
Consequently, system \eqref{nonlin-u}-\eqref{nonlin-h} can be restated as
\begin{equation}
\label{full-problem}
\begin{aligned}
\partial_t z +{\sf A}w & = \hat{\sf F}(w)          && \mbox{in} && \Omega\setminus\Sigma, \\
         \hat{\sf B} w & = \hat{\sf G}(z)  && \mbox{on} && \Sigma, \\
                  z(0) & = z_0, && &&
\end{aligned}
\end{equation}
where  $\hat{\sf F}(w):=(F_u(w)/\varrho,F_\theta(z)/\varrho \kappa_*,F_h(z))$.
It is important to note that, formally, $(\hat{\sf F}(0),\hat{\sf F}^\prime(0))=0$ and $(\hat {\sf G}(0),\hat {\sf G}^\prime (0))=0$, with
$\hat{\sf F}^\prime$ and $\hat {\sf G}^\prime$ the Fr\'echet derivative of $\hat{\sf F}$ and $\hat {\sf G}$, respectively.
\subsection{Parameterization of $\cSM$.}

In order to parameterize $\cSM_*$ over $\cSX_*$ one solves the problem
\begin{equation}
\begin{aligned}\label{param-state-mf-nl}
\omega \overline{z} + {\sf A}_0\overline{w} &=0 &&\mbox{in}  &&\Omega\setminus\Sigma,\\
\hat{{\sf B}} \overline{w} &= \hat{\sf G}(\overline{z}+\tilde{z})  && \mbox{on} && \Sigma,
\end{aligned}
\end{equation}
where $\omega>0$ is sufficiently large and 
$$ {\sf A}_0w:= (-(\upmu_*/\varrho)\Delta u + (1/\varrho)\nabla\pi , -(d_*/\varrho\kappa_*)\Delta\vartheta,0),
\quad w=(u,\vartheta,h,\pi)\in \hat X_\gamma .$$
 Given $\tilde{z}\in \cSX_*$ small, 
we are looking for a solution $\overline{w}\in \hat{X}_\gamma$. 
For this, the implicit function theorem will be employed. 
Obviously, for $\tilde{z}=0$ one has the trivial solution $\overline{w}=0$. 
Moreover, one notes that the first line in \eqref{param-state-mf-nl} yields $\overline{h}=0$.
As $\hat{\sf G}:X_\gamma\to \hat{Y}_\gamma$ is of class $C^1$ with $(\hat{\sf G}(0), \hat{\sf G}^\prime(0))=0$, 
where $\hat Y_\gamma = Y_\gamma\times W^{1-3/p}_p(\Sigma)$, it is to be shown that the linear problem
\begin{equation}
\label{param-state-mf}
\begin{aligned}
\omega \overline{z} + {\sf A}_0 \overline{w} &=0 && \mbox{in} &&\Omega\setminus\Sigma,\\
\hat{{\sf B}} \overline{w} &= \hat{g}   && \mbox{on} && \Sigma,
\end{aligned}
\end{equation}
admits a unique solution, for any given datum $\hat{g} \in \hat{Y}_\gamma$. 
In fact, the propositions in the next subsection will do this job, up to lower order perturbations.
Therefore, one may apply the implicit function theorem to find a ball $B_{\cSX_*}(0,r)$ and a map 
$$\hat\phi: B_{\cSX_*}(0,r)  \to \hat{X}_\gamma$$ 
of class $C^1$ with $(\hat\phi(0),\hat\phi^\prime(0))=0$ 
such that $\overline{w}=\hat\phi(\tilde{z})$ is the unique solution of \eqref{param-state-mf-nl} near zero. 
The map 
$({\rm id} +\phi):B_{\cSX_*}(0,r)\to \cSM_*$ is surjective onto a neighborhood of zero,
where $\phi$ means dropping the pressure $\pi$ in $\hat\phi$. 
To see this fix any $z\in \cSM_*$ and solve the linear problem 
with  \eqref{param-state-mf} $ \hat g=\hat{\sf G}(z)$
to obtain a unique $\overline{w}=(\overline{z},\overline{\pi})\in \hat{X}_\gamma$. 
Let then $\tilde{z}= z-\overline{z}$. If $z$ is chosen small enough, 
$\tilde{z}\in B_{\cSX_*}(0,r)$ and hence $\overline{w}=\hat\phi(\tilde{z})$ by uniqueness,
yielding the representation $z=\tilde z +\phi(\tilde z)$.
Consequently,
the map $\Phi:B_{\cSX_*}(0,r)\to \cSM_*$ defined by 
\begin{equation}
\label{Phi-SM}
\Phi(\tilde{z}) = \tilde{z} + \phi(\tilde{z})
\end{equation}
yields the desired parameterization. In conclusion, the following result has been obtained.
\begin{theorem}\label{tangent-space-SM}  
The state manifold $\cSM_*$ can be parameterized via the map $\Phi$ over the space $\cSX_*$. 
In particular, the tangent space $T_{e_*}\cSM$ at the equilibrium $e_*$, and equivalently  the tangent space $T_0\cSM_*$ at zero, 
is isomorphic to the space $\cSX_*$.
\end{theorem}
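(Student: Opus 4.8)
The plan is to build on the construction of $\Phi=\mathrm{id}+\phi$ carried out above \eqref{Phi-SM}: the map $\phi$ has already been produced through the implicit function theorem, so the remaining work is to verify that $\Phi$ is a local $C^1$-diffeomorphism and to read off the tangent space.

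First I would make the application of the implicit function theorem to \eqref{param-state-mf-nl} precise. Introduce
$$ {\sf K}(\overline{w},\tilde{z}):=\big(\omega\overline{z}+{\sf A}_0\overline{w},\;\hat{\sf B}\overline{w}-\hat{\sf G}(\overline{z}+\tilde{z})\big),\qquad \overline{w}\in\hat{X}_\gamma,\ \tilde{z}\in\cSX_*, $$
viewed as a $C^1$-map into the appropriate product of a ground space and the boundary data space $\hat Y_\gamma$. Since $\hat{\sf G}(0)=0$ one has ${\sf K}(0,0)=0$, and since $\hat{\sf G}^\prime(0)=0$ the partial derivative $D_{\overline{w}}{\sf K}(0,0)$ is exactly $\overline{w}\mapsto(\omega\overline{z}+{\sf A}_0\overline{w},\hat{\sf B}\overline{w})$, i.e.\ the left-hand side of the linear problem \eqref{param-state-mf}. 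The key point is that this operator is a topological isomorphism onto its target; this is precisely the unique solvability of \eqref{param-state-mf} for arbitrary data $\hat g\in\hat Y_\gamma$, supplied (after absorbing lower order terms by taking $\omega$ large) by the propositions of the following subsection. Granting this, the implicit function theorem furnishes a $C^1$-map $\hat\phi$ on a ball $B_{\cSX_*}(0,r)$ with $\hat\phi(0)=0$, and differentiating ${\sf K}(\hat\phi(\tilde z),\tilde z)=0$ at $\tilde z=0$, together with $\hat{\sf G}^\prime(0)=0$, forces $\hat\phi^\prime(0)=0$; dropping the pressure yields $\phi$ with $(\phi(0),\phi^\prime(0))=0$.

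Next I would establish the two structural properties of $\Phi$. Surjectivity onto a neighborhood of $0$ in $\cSM_*$ is the argument already indicated: for small $z\in\cSM_*$ solve \eqref{param-state-mf} with $\hat g=\hat{\sf G}(z)$ to obtain $\overline{w}=(\overline{z},\overline{\pi})$, put $\tilde z=z-\overline{z}$, and invoke uniqueness to get $\overline{w}=\hat\phi(\tilde z)$, whence $z=\tilde z+\phi(\tilde z)=\Phi(\tilde z)$. For the diffeomorphism property, note that $\Phi^\prime(0)=\mathrm{id}_{\cSX_*}+\phi^\prime(0)=\mathrm{id}_{\cSX_*}$ is an isomorphism of $\cSX_*$, so by the inverse function theorem $\Phi$ restricts to a $C^1$-diffeomorphism from a smaller ball $B_{\cSX_*}(0,r^\prime)$ onto a relatively open neighborhood of $0$ in $\cSM_*$. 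This is the asserted parameterization.

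Finally, the tangent space identification is immediate: as $\Phi$ is a chart for $\cSM_*$ at $0$ with $\Phi(0)=0$, one reads off $T_0\cSM_*=\mathrm{ran}\,\Phi^\prime(0)=\cSX_*$, and the isomorphism $T_{e_*}\cSM\cong T_0\cSM_*$ is induced by the Hanzawa chart that identifies $\cSM$ near $e_*$ with $\cSM_*$ near $0$ (with $e_*\mapsto 0$). The main obstacle is the isomorphism property of $D_{\overline{w}}{\sf K}(0,0)$, equivalently the maximal-regularity well-posedness of the linear Stokes/heat transmission problem \eqref{param-state-mf}; once this and the identity $\hat{\sf G}^\prime(0)=0$ are in hand, everything else follows formally from the implicit and inverse function theorems.
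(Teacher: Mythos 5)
Your proposal is correct and follows essentially the same route as the paper: the implicit function theorem applied to \eqref{param-state-mf-nl} with invertibility of the linearization supplied by the unique solvability of \eqref{param-state-mf} (the two elliptic propositions, modulo lower order perturbations absorbed by large $\omega$), the fact that $\hat{\sf G}^\prime(0)=0$ forces $\hat\phi^\prime(0)=0$, and the same uniqueness argument for surjectivity of ${\rm id}+\phi$ onto a neighborhood of zero in $\cSM_*$. Your explicit inverse-function-theorem step confirming that $\Phi$ is a local $C^1$-diffeomorphism (via $\Phi^\prime(0)={\rm id}$) merely makes precise what the paper leaves implicit in ``yields the desired parameterization,'' so there is no substantive difference.
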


\noindent
Note that an equilibrium $e_\infty\in \cE$ close to $e_*\in\cE$ in $\cSM$, 
respectively $z_\infty$ close to zero in $X_\gamma$, decomposes as
$$ z_\infty = \tilde{z}_\infty +\overline{z}_\infty = \tilde{z}_\infty +\phi(\tilde{z}_\infty),$$
with $\tilde{z}_\infty \in \cSX_*$. This follows as   $ {\sf A} w_\infty =\hat{\sf F}(w_\infty)=0$ at an equilibrium.
\subsection{Auxiliary linear elliptic problems}
For the application of the implicit function theorem in Section 4.2 the following results were employed.
The first one concerns an elliptic transmission problem for the temperature,
and the second one a two-phase Stokes problem.

\begin{proposition}
Let $\omega>0$ be large, $\varrho,\kappa_*,d_*>0$,  and $ p> n+2$. Then the problem
\begin{equation*}
\begin{aligned}
\varrho\kappa_*\omega \vartheta - d_* \Delta \vartheta &=0 && \rm{in} && \Omega\setminus \Sigma,\\
\partial_\nu \vartheta &= 0  &&  \rm{on} && \partial\Omega,\\
[\![\vartheta]\!]=0, \quad -[\![d_*\partial_\nu \vartheta]\!]&=g &&  \rm{on} && \Sigma,\\
\end{aligned}
\end{equation*}
has a unique solution $\vartheta\in W^{2-2/p}_p(\Omega\setminus\Sigma)$ if and only if $g\in W^{1-3/p}_p(\Sigma)$.
\end{proposition}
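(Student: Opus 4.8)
The plan is to treat this as a parameter-elliptic transmission problem and to show that, for $\omega$ large, the flux-jump datum determines the temperature through an isomorphism between the two trace spaces in the statement; the largeness of $\omega$ serves both to remove the null space and solvability condition of the pure Neumann Laplacian and to dominate the lower-order terms generated by localization. The necessity (\emph{only if}) direction is pure trace theory: if $\vartheta\in W^{2-2/p}_p(\Omega\setminus\Sigma)$ solves the problem, then $p>n+2>3$ forces $2-2/p>1+1/p$, so the one-sided normal derivatives of $\vartheta$ admit traces on $\Sigma$ lying in $W^{(2-2/p)-1-1/p}_p(\Sigma)=W^{1-3/p}_p(\Sigma)$, whence $g=-[\![d_*\partial_\nu\vartheta]\!]\in W^{1-3/p}_p(\Sigma)$. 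Interior elliptic regularity for the homogeneous equation makes $\vartheta$ smooth away from $\partial\Omega\cup\Sigma$, so these traces are genuinely defined.

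For the sufficiency direction I would first dispose of uniqueness by an energy estimate. Taking the $L_2(\Omega)$-inner product of the homogeneous equation ($g=0$) with $\bar\vartheta$ and integrating by parts, the integral over $\partial\Omega$ vanishes because $\partial_\nu\vartheta=0$ there, and the interface integral vanishes because $[\![\vartheta]\!]=0$ together with $[\![d_*\partial_\nu\vartheta]\!]=0$. This yields $\varrho\kappa_*\omega\,\|\vartheta\|_{L_2(\Omega)}^2+d_*\,\|\nabla\vartheta\|_{L_2(\Omega)}^2=0$, hence $\vartheta=0$; here one uses the embedding $W^{2-2/p}_p(\Omega\setminus\Sigma)\hookrightarrow L_2(\Omega)$, valid since $\Omega$ is bounded, and the interior regularity required to justify the integration by parts.

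Existence is the substantive step, obtained by the standard localization and coefficient-freezing procedure. A finite partition of unity subordinate to a cover of $\bar\Omega$ reduces the problem to model problems: a half-space problem for $\varrho\kappa_*\omega-d_*\Delta$ with a Neumann condition near $\partial\Omega$, an interior problem away from $\partial\Omega\cup\Sigma$, and a two-sided problem with flat interface $\R^{n-1}=\{x_n=0\}$ carrying the transmission conditions. On the flat model, a tangential Fourier transform solves the problem explicitly: writing $\beta(\xi')=(\varrho\kappa_*\omega/d_*+|\xi'|^2)^{1/2}$, with the obvious modification when the coefficients jump across $\Sigma$, one finds $\hat\vartheta(\xi',x_n)=(2d_*\beta(\xi'))^{-1}\hat g(\xi')\,e^{-\beta(\xi')|x_n|}$, so the solution operator $g\mapsto\vartheta$ is a Fourier multiplier that gains one spatial derivative and maps $W^{1-3/p}_p(\R^{n-1})$ into $W^{2-2/p}_p(\R^n_\pm)$ by a Mikhlin-type estimate in the parameter-dependent symbol class. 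Patching the local solutions and collecting the commutator and lower-order terms into a right-hand side produces a Neumann series, which converges once $\omega$ is large enough that these error terms form a contraction.

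The main obstacle is exactly this closing step: establishing the parameter-elliptic a priori estimate $\omega\,\|\vartheta\|_{L_p}+\|\vartheta\|_{H^2_p(\Omega\setminus\Sigma)}\lesssim\|g\|_{W^{1-3/p}_p(\Sigma)}$ with controlled dependence on $\omega$, so that the perturbation terms are genuinely subordinate for $\omega$ large. The model computation itself is routine; the care lies in choosing the parameter-dependent norms for which the multiplier bounds are uniform and the commutators are of strictly lower order. Alternatively, the entire sufficiency part can be deduced from the resolvent theory for the realization of $-(d_*/\varrho\kappa_*)\Delta$ under these Neumann and transmission conditions, together with surjectivity of the associated Neumann-type boundary operator, as developed in \cite{PrSi16}.
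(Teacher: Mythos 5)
Your proposal is correct in outline, but note that the paper itself contains no argument for this proposition: its entire ``proof'' is a citation to \cite[Section 6.5]{PrSi16}, where the parameter-elliptic transmission theory is developed. What you have written is essentially the standard argument underlying that reference, supplemented by the two easy halves that the citation leaves implicit. Your necessity direction is exactly right: since $p>n+2>3$ one has $(2-2/p)-1-1/p=1-3/p>0$, so the one-sided normal traces land in $W^{1-3/p}_p(\Sigma)$. Your uniqueness argument is also sound, with one point worth making explicit: to justify the integration by parts one should first bootstrap, observing that $\Delta\vartheta=(\varrho\kappa_*\omega/d_*)\vartheta\in L_p(\Omega\setminus\Sigma)$, so elliptic regularity for the transmission problem upgrades $\vartheta$ to $H^2_p(\Omega\setminus\Sigma)$, after which the boundary and interface terms cancel as you say (using that $[\![\vartheta]\!]=0$ and $[\![d_*\partial_\nu\vartheta]\!]=0$ kill the jump of the product $d_*\partial_\nu\vartheta\,\bar\vartheta$, even when $d_*$ and $\kappa_*$ are merely piecewise constant).

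Two caveats on the existence half. First, the case where the coefficients jump across $\Sigma$ is the relevant one here ($d_*=d_i(\theta_*)$, $\kappa_*=\kappa_i(\theta_*)$ differ in the two phases), and there the model solution has distinct decay rates $\beta_\pm(\xi')=(\varrho_\pm\kappa_\pm\omega/d_\pm+|\xi'|^2)^{1/2}$ on the two sides; the multiplier coefficient becomes $\bigl(d_+\beta_++d_-\beta_-\bigr)^{-1}$ rather than $(2d_*\beta)^{-1}$, and one should check that the common boundary value enforces $[\![\vartheta]\!]=0$ — your ``obvious modification'' is harmless but should be recorded. Second, and more substantively, the a priori estimate you propose to close the Neumann series, $\omega\,\|\vartheta\|_{L_p}+\|\vartheta\|_{H^2_p(\Omega\setminus\Sigma)}\lesssim\|g\|_{W^{1-3/p}_p(\Sigma)}$, is stated in the wrong scale and cannot hold: an $H^2_p$ solution would force $g=-[\![d_*\partial_\nu\vartheta]\!]\in W^{1-1/p}_p(\Sigma)$, whereas the ``if and only if'' in the statement pins the solution class at exactly $W^{2-2/p}_p(\Omega\setminus\Sigma)$ for $g\in W^{1-3/p}_p(\Sigma)$ — this sharp correspondence of fractional orders (a gain of $1+1/p$ derivatives from $\Sigma$ into $\Omega\setminus\Sigma$) is the whole content of the proposition. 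The estimate to be made uniform in $\omega$ must therefore be formulated in the Sobolev--Slobodetskii pair $W^{2-2/p}_p$/$W^{1-3/p}_p$ with a suitable fractional $\omega$-weight, consistent with your own (correct) model computation; with that repair, the localization and large-$\omega$ perturbation scheme closes as you describe, and is in substance the route taken in \cite{PrSi16}.
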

\noindent
The assertion follows, for instance, from the results in \cite[Section 6.5]{PrSi16}.
%
%

\begin{proposition}
Let $\omega>0$ be large, $\varrho,\upmu_*>0$, and $p>n+2$. Then the problem
\begin{equation*}
\begin{aligned}
\varrho \omega u -\upmu_* \Delta u +\nabla \pi &=0 && \rm{in} && \Omega\setminus\Sigma,\\
{\rm div} \, u &= g_d  && \rm{in} && \Omega\setminus\Sigma,\\
u&=0  &&  \rm{on} && \partial\Omega,\\
 [\![u]\!]=0,\quad
    - \cP_\Sigma [\![2\upmu_*D(u)\nu_\Sigma]\!] &= g_\tau && \rm{on} && \Sigma,\\
([\![2\upmu_* D(u)\nu_\Sigma ]\!]\,|\nu_\Sigma ) +[\![\pi]\!]  &= g_\nu  && \rm{on} && \Sigma,\\
\end{aligned}
\end{equation*}
has a unique solution
$$u\in W^{2-2/p}_p(\Omega\setminus\Sigma), \;\; \pi\in \dot{W}^{1-2/p}_p(\Omega\setminus\Sigma),
\;\; [\![\pi]\!]\in W^{1-3/p}_p(\Sigma), $$
 if and only if
$$ g_d\in W^{1-2/p}_{p,0}(\Omega\setminus\Sigma),\quad (g_\tau,g_\nu)\in W^{1-3/p}_p(\Sigma;T\Sigma\times\R).$$
\end{proposition}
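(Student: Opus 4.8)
The plan is to establish existence and uniqueness via maximal regularity theory for the two-phase Stokes operator, which is essentially the stationary analogue of the generator $L$ from Section~3 (with the resolvent parameter $\omega$ in place of the time derivative and with inhomogeneous boundary data). The statement is an iff, so both the sufficiency (solvability for data in the indicated trace spaces) and the necessity (any solution forces the data to lie in those spaces) must be addressed, but the necessity direction is the easy half: it is just trace theory. Indeed, if $u\in W^{2-2/p}_p(\Omega\setminus\Sigma)$ then ${\rm div}\,u\in W^{1-2/p}_p(\Omega\setminus\Sigma)$, and integrating the divergence equation over each phase component and using the no-slip condition on $\partial\Omega$ together with $[\![u]\!]=0$ forces the compatibility condition $\int_{\Omega\setminus\Sigma}g_d\,dx=0$, i.e.\ $g_d\in W^{1-2/p}_{p,0}(\Omega\setminus\Sigma)$. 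Similarly $\cP_\Sigma[\![2\upmu_*D(u)\nu_\Sigma]\!]$ and $([\![2\upmu_*D(u)\nu_\Sigma]\!]\,|\nu_\Sigma)+[\![\pi]\!]$ lie in the asserted boundary spaces by the trace theorem applied to $D(u)$ and to $[\![\pi]\!]$, since $p>n+2$ guarantees the relevant embeddings and the sharp regularity $[\![\pi]\!]\in W^{1-3/p}_p(\Sigma)$.

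For the sufficiency direction, the key step is to first reduce to the case $g_d=0$ of a solenoidal velocity field. I would solve an auxiliary divergence problem ${\rm div}\,u_0=g_d$ in $\Omega\setminus\Sigma$ with $u_0=0$ on $\partial\Omega$ and $[\![u_0]\!]=0$; because $g_d\in W^{1-2/p}_{p,0}$ satisfies exactly the mean-value compatibility condition, such a $u_0\in W^{2-2/p}_p(\Omega\setminus\Sigma)$ exists (this is a classical Bogovskii-type construction, carried out separately on each phase after matching the flux across $\Sigma$). Subtracting $u_0$ reduces the problem to the divergence-free case, at the cost of modifying the data $g_\tau,g_\nu$ by lower-order terms that remain in the correct spaces. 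It then remains to solve the homogeneous-divergence two-phase Stokes resolvent problem with the two inhomogeneous surface stress conditions, one tangential and one normal.

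For this reduced problem the natural approach is to invoke the $L_p$-maximal-regularity theory for the two-phase Stokes operator developed in the monograph \cite{PrSi16}; concretely, one writes the solution operator as the resolvent of the (stationary) Stokes generator and reads off that $\varrho\omega-\upmu_*\Delta+\nabla\pi$, together with the transmission boundary conditions, is boundedly invertible from the data spaces to the solution spaces for $\omega>0$ large. The boundedness of the pressure part, including the sharp jump regularity $[\![\pi]\!]\in W^{1-3/p}_p(\Sigma)$, follows from the associated weak transmission problem for $\pi$ exactly as in the definition of ${\sf D}(L)$ in Section~3. The main obstacle is the treatment of the coupled tangential and normal stress conditions on $\Sigma$: unlike the scalar temperature problem of the preceding Proposition, here the velocity and the pressure are genuinely coupled through the normal balance $([\![2\upmu_*D(u)\nu_\Sigma]\!]\,|\nu_\Sigma)+[\![\pi]\!]=g_\nu$, and one must verify the Lopatinskii--Shapiro condition for the full Stokes boundary symbol to obtain maximal regularity with the correct anisotropic trace spaces. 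Once that structural point is granted from \cite{PrSi16}, uniqueness follows by taking $g_d=g_\tau=g_\nu=0$ and testing the equation against $u$: the computation in \eqref{evid-1} (now with $\lambda$ replaced by $\omega>0$ and $\cA_\Sigma h$ absent) yields $\omega|\varrho^{1/2}u|^2_{L_2}+2|\upmu_*^{1/2}D(u)|^2_{L_2}=0$, forcing $u=0$ by Korn's inequality and the no-slip condition, whence $\nabla\pi=0$ and $[\![\pi]\!]=0$ give $\pi=0$ as well.
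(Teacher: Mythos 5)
Your proposal is correct in substance, but note that the paper does not actually prove this proposition: its entire ``proof'' is the citation to \cite[Chapter~8]{PrSi16}, where the two-phase Stokes resolvent problem is treated directly in the sharp Sobolev--Slobodetskii scale \emph{with inhomogeneous divergence included}, so that neither your Bogovskii-type reduction to $g_d=0$ nor a separate uniqueness argument is needed there. What your decomposition buys is a more self-contained structure: the necessity half by trace theory and the divergence theorem (correctly yielding only the \emph{total} mean-zero condition $\int_{\Omega\setminus\Sigma}g_d\,dx=0$, since the flux across $\Sigma$ cancels between the phases rather than vanishing per component), an isolation of the genuinely hard core (the Lopatinski{\u\i}--Shapiro analysis of the coupled tangential/normal stress conditions, which you rightly delegate to \cite{PrSi16}), and an elementary energy uniqueness proof. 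Two soft spots deserve attention. First, ``reading off'' bounded invertibility in the spaces $W^{2-2/p}_p\times \dot W^{1-2/p}_p$ from maximal $L_p$-regularity of the evolution problem is not immediate: resolvent estimates give $H^2_p$-solvability for $L_p$-data, and the fractional-order statement requires either interpolation along the elliptic scale or the Besov-space elliptic theory proved directly in \cite[Chapter~8]{PrSi16}; since you invoke that source anyway, this is acceptable, but the phrasing overstates what the resolvent alone delivers. Second, your uniqueness computation tacitly uses the sign convention of the nonlinear system \eqref{nonlin-u}, namely $-([\![2\upmu_* D(u)\nu_\Sigma]\!]\,|\nu_\Sigma)+[\![\pi]\!]=g_\nu$, under which zero data give $[\![T_*\nu_\Sigma]\!]=0$ and the surface term in the energy identity drops out, yielding $\omega|\varrho^{1/2}u|^2_{L_2}+2|\upmu_*^{1/2}D(u)|^2_{L_2}=0$; with the sign as literally printed in the proposition the normal part of $[\![T_*\nu_\Sigma]\!]$ would \emph{not} vanish and the argument would need adjustment --- this is evidently a sign typo in the statement, but you should say which convention you are using. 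Also be aware that in the two-phase Bogovskii step the flux matching across $\Sigma$ must be arranged so that $[\![u_0]\!]=0$ is preserved at $W^{2-2/p}_p$-regularity; you flag this, and at outline level that suffices.
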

\noindent
The assertion follows, for instance, from \cite[Chapter 8]{PrSi16}.
\subsection{Nonlinear stability analysis}
In oder to analyze the stability properties of an equilibrium $e_*=(0,\theta_*,\Sigma)$,
the time-dependent variables are decomposed in the same way as in the previous section into 
$z(t)=\overline{z}(t) +\tilde{z}(t)$
and $w(t)=\overline{w}(t)+\tilde{w}(t)$.
The full problem \eqref{full-problem} may then be decomposed 
into two systems, formally one for $\overline{w}$ and one for $\tilde{w}$, according to
\begin{equation}
\begin{aligned}\label{bar-problem}
(\omega+\partial_t) \overline{z} + {\sf A} \overline{w} &=\hat{\sf F}(\overline{w} +\tilde{w}) && \mbox{in} &&\Omega\setminus\Sigma,\\
\hat{\sf B} \overline{w} &= \hat{{\sf G}}(\overline{z}+\tilde{z}) && \mbox{on} && \Sigma,\\
\overline{z}(0)&=\phi(\tilde{z}_0) && \mbox{in} && \Omega,
\end{aligned}
\end{equation}
and 
\begin{equation}
\begin{aligned}\label{tilde-problem}
\partial_t\tilde{z} + {\sf A} \tilde{w} &= \omega \overline{z} && \mbox{in} &&\Omega\setminus\Sigma,\\
\hat{\sf B} \tilde{w} &= 0  && \mbox{on} && \Sigma,\\
\tilde{z}(0)&=\tilde{z}_0 && \mbox{in} && \Omega.
\end{aligned}
\end{equation}
Adding these equations yields problem \eqref{full-problem}.

\medskip
One should think of this decomposition in the following way. The first part has a fast dynamics due to $\omega>0$ large and takes care of the stationary boundary conditions, while the second equation lives in the tangent space $\cSX_*$  and carries the actual dynamics. 

There should be a word of caution. 
While for the initial value $z_0$ as well as for $z_\infty$  
the decomposition $z_0=\tilde{z}_0 +\phi(\tilde{z}_0)$
and $z_\infty=\tilde{z}_\infty +\phi(\tilde{z}_\infty)$
is employed, it no longer holds in the time-dependent case; in general $\overline{z}(t)\neq \phi(\tilde{z}(t))$!

In order to show stability and exponential convergence of solutions starting close to 
the equilibrium $e_*=(0,\theta_*,\Sigma)$, the decomposition 
$z= \overline{z}+\tilde{z} +z_\infty$ is used, 
with the idea that $z_\infty$ will be the limit of $z(t)$ as $t$ goes to infinity, 
and $\overline{z}, \tilde{z}$ are exponentially decaying. This means that the corresponding equations for $\overline{z}$ and $\tilde{z}$ are shifted to
\begin{equation}
\begin{aligned}\label{bar-shifted}
(\omega+\partial_t) \overline{z} + {\sf A} \overline{w} &=\hat{\sf F}(\overline{w} +\tilde{w}+w_\infty)- \hat{\sf F}(w_\infty) 
&& \mbox{in} &&\Omega\setminus\Sigma,\\
\hat{\sf B} \overline{w} &= \hat{{\sf G}}(\overline{z}+\tilde{z}+z_\infty)-\hat{\sf G}(z_\infty) && \mbox{on} && \Sigma,\\
\overline{z}(0)&=\phi(\tilde{z}_0)-\phi(\tilde{z}_\infty) && \mbox{in} && \Omega,
\end{aligned}
\end{equation}
and
\begin{equation}
\begin{aligned}\label{tilde-shifted}
\hspace{-1cm}
\partial_t\tilde{z} + {\sf A} \tilde{w} &= \omega \overline{z} && \mbox{in} && \Omega\setminus\Sigma,\\
\hat{\sf B} \tilde{w} &= 0  && \mbox{on} && \Sigma,\\
\tilde{z}(0)&=\tilde{z}_0-\tilde{z}_\infty && \mbox{in} && \Omega,
\end{aligned}
\end{equation}
where $z_0=\tilde{z}_0 +\phi(\tilde{z}_0)$ and $z_\infty=\tilde{z}_\infty +\phi(\tilde{z}_\infty)$. 
It is convenient to remove the pressure $\tilde{\pi}$ from \eqref{tilde-shifted} by solving the weak transmission problem 
\begin{equation*}
\begin{aligned}
(\varrho^{-1}\nabla \tilde\pi|\nabla\phi )_{L_2(\Omega)} 
&=(\varrho^{-1}\upmu_*\Delta \tilde u|\nabla \phi)_{L_2(\Omega)} +\omega (\overline{u}|\nabla\phi)_{L_2(\Omega)} ,
&&\phi\in {H}^1_{p^\prime}(\Omega),\\
[\![\tilde\pi]\!] &=-\sigma \cA_\Sigma \tilde h 
+ ([\![2\upmu_* D(\tilde u)\nu_\Sigma ]\!]\,|\nu_\Sigma)&&\mbox{on}\;\; \Sigma
\end{aligned}
\end{equation*}
and insert it into \eqref{bar-shifted} for $\tilde w$.
The first problem may be written abstractly as
\begin{equation}
\begin{aligned} \label{bar-abstract-infty}
\LL_\omega \overline{w} =N(\overline{w}, \tilde{z}, \tilde{z}_\infty),\quad t>0, 
\quad \overline{z}(0)=\phi(\tilde{z}_0)-\phi(\tilde{z}_\infty),
\end{aligned}
\end{equation}
and with the Helmholtz projection $\PP$, the second one as the evolution equation
\begin{align} \label{tilde-abstract-infty}
\partial_t \tilde{z} + L \tilde{z} = \omega{\mathbb P} \overline{z}, \quad t>0,\quad \tilde{z}(0)=\tilde{z}_0- \tilde{z}_\infty.
\end{align}
Here $L:X_1\to X_0$ is the operator defined in Section 3. 
For further use 
the space $\FF(a)$ of data $(f_u,f_\theta,f_h,g_d,g_u,g_\theta)$,
$$ \FF(a)= \FF_1(a)\times\FF_2(a)\times\FF_3(a)\times\FF_4(a)\times \FF_5(a)\times\FF_6(a),$$
is introduced, with 
\begin{equation*}
\begin{aligned}
& \FF_1(a)=L_p(J;L_p(\Omega))^n, \\
& \FF_2(a)=L_p(J;L_p(\Omega)), \\  
& \FF_3(a) = W^{1-1/2p}_p(J;L_p(\Sigma))\cap L_p(J;W^{2-1/p}_p(\Sigma)),\\
& \FF_4(a) = H^1_p(J;{_0\dot{H}}^{-1}_p(\Omega))\cap L_p(J;H^1_p(\Omega\setminus\Sigma)), \\
& \FF_5(a) = W^{1/2-1/2p}_p(J;L_p(\Sigma))^n\cap L_p(J;W^{1-1/p}_p(\Sigma))^n, \\
& \FF_6(a) = W^{1/2-1/2p}_p(J;L_p(\Sigma))\cap L_p(J;W^{1-1/p}_p(\Sigma)), \\
\end{aligned}
\end{equation*}
where $J=(0,a)$ and  
${_0\dot{H}}^{-1}_p(\Omega))=[\dot H^1_{p^\prime,\partial\Omega}(\Omega)]^*$ is the dual of the 
homogeneous space
$$\dot H^1_{p^\prime,\partial\Omega}(\Omega)
:=\{\phi\in L_{1,{\rm loc}}(\Omega):\nabla\phi\in L_{p^\prime}(\Omega),\; \phi=0\;\;\hbox{on}\;\;\partial\Omega\}.$$
In addition, the function space
\begin{equation}
\label{EE-tilde}
\tilde\EE(a):= H^1_{p}(J;X_0)\cap L_{p}(J;X_1),\quad J=(0,a),
\end{equation}
will be used, with $X_0$ and $X_1$ as in Section 3.
It follows from the results in Chapters 6 and 8 of \cite{PrSi16} that
$$  
(\LL_\omega,{\rm tr})\in{\rm Isom}(\hat{\EE}(\infty,\delta), \FF(\infty,\delta)\times X_\gamma),
$$ 
provided $\omega$ is chosen sufficiently large, that is,
$(\LL_\omega,{\rm tr})$ is an isomorphism 
from $\hat{\EE}(\infty,\delta)$ onto $\FF(\infty,\delta)\times X_\gamma$.
Here the following notation is employed:
$$ z\in \EE(\infty,\delta) \quad \Leftrightarrow \quad e^{\delta t} z \in \EE(\infty),$$
and similarly for $\FF(\infty,\delta)$, $\hat{\EE}(\infty,\delta)$ and $\tilde\EE(\infty,\delta)$.

According to Theorem~\ref{linstab-heat},  
$-L$ is the generator of an analytic $C_0$-semigroup with maximal $L_p$-regularity in $X_0$. 
Moreover, it follows from the same theorem that
there is a number $\delta_0>0$ such that
${\rm Re}\,\sigma(-L)\cap  (-\delta_0,0)=\emptyset$. Let $\delta$ be chosen so that $0<\delta<\delta_0$.
On shows that the function 
$N$ is of class $C^1$ with respect to the variables 
$(\overline{w},\tilde{z})$ in the function spaces  
$\hat\EE(\infty,\delta)\times\tilde\EE(\infty,\delta)$,
provided  condition \eqref{condition-H} holds, 
but merely continuous in $\tilde{z}_\infty$, 
unless one additional degree of regularity is imposed on the coefficients.
\medskip

\noindent
The main theorem of this chapter is the following.
\begin{theorem} \label{nonlinear-stability}
 Let $p>n+2$ and suppose that condition \eqref{condition-H} holds.

Then each equilibrium $e_*=(0,\theta_*,\Sigma)\in\cE$
 is nonlinearly stable in the state manifold $\cSM$. Any solution with initial value close to $e_*$ in $\cSM$ exists globally
and converges in $\cSM$ to a possibly different stable equilibrium $e_\infty\in\cE$ at an exponential rate.
\end{theorem}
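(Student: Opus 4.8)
The plan is to establish the \emph{generalized principle of linearized stability} in the normally stable case, using the spectral information assembled in Section~3 and the geometric reduction of Section~4. By \thmref{linstab-heat} the equilibrium $e_*$ is normally stable: the eigenvalue $0$ of $L$ is semi-simple with finite-dimensional eigenspace ${\sf N}(L)=T_{e_*}\cE$ of dimension $m(n+1)+1$, no other spectral value has nonnegative real part, and there is a gap $\delta_0>0$ with ${\rm Re}\,\sigma(-L)\cap(-\delta_0,0)=\emptyset$. Fix $0<\delta<\delta_0$. Since $0$ is semi-simple one has the topological splitting $X_0={\sf N}(L)\oplus{\sf R}(L)$; let $P^c$ be the spectral projection onto the center part ${\sf N}(L)$ and $P^s=I-P^c$ that onto the stable part, so that $\|e^{-tL}P^s\|\le M e^{-\delta t}$ for all $t\ge 0$.

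Next I would reduce the dynamics to the tangent space. By \thmref{tangent-space-SM} the state manifold $\cSM_*$ is parameterized over $\cSX_*$ through $\Phi={\rm id}+\phi$, and the equilibria near $e_*$ correspond to $\tilde z_\infty\in\cSX_*$ via $z_\infty=\tilde z_\infty+\phi(\tilde z_\infty)$. Using the shifted splitting $z=\overline z+\tilde z+z_\infty$, the evolution decomposes into the fast stationary-boundary part \eqref{bar-abstract-infty} for $\overline w$ and the tangential evolution \eqref{tilde-abstract-infty} for $\tilde z$. Working in the exponentially weighted classes $\hat\EE(\infty,\delta)$, $\FF(\infty,\delta)$, $\tilde\EE(\infty,\delta)$ and using the isomorphism $(\LL_\omega,{\rm tr})\in{\rm Isom}(\hat\EE(\infty,\delta),\FF(\infty,\delta)\times X_\gamma)$ together with $(N(0),N'(0))=0$, I would solve \eqref{bar-abstract-infty} for $\overline w=\overline w(\tilde z,\tilde z_\infty)$ by a contraction argument, valid for $\omega$ large and the data small, and of class $C^1$ in $\tilde z$.

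The heart of the matter is the $\tilde z$-equation. Applying $P^c$ and $P^s$ to \eqref{tilde-abstract-infty} and using $LP^c=0$, the stable component $P^s\tilde z$ is recovered from the variation-of-constants formula and lies in $\tilde\EE(\infty,\delta)$ by the decay of $e^{-tL}P^s$, while the center component obeys the finite-dimensional law $\partial_t P^c\tilde z=\omega P^c\PP\overline z$. Exponential decay of $\tilde z$ then forces the compatibility condition
\begin{equation*}
P^c(\tilde z_0-\tilde z_\infty)+\int_0^\infty \omega\,P^c\PP\,\overline z(s)\,ds=0,
\end{equation*}
which, read together with the equilibrium constraint on the stable part of $\tilde z_\infty$, determines the limiting equilibrium with $P^c\tilde z_\infty$ close to $P^c\tilde z_0$. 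The main obstacle is precisely the one flagged before the statement: the nonlinearity $N$ is only \emph{continuous} (not $C^1$) in $\tilde z_\infty$ unless extra regularity of the coefficients is imposed, so the implicit function theorem cannot be applied directly to solve for $\tilde z_\infty$. I would circumvent this by freezing $\tilde z_\infty$ to solve the coupled $(\overline w,\tilde z)$-system via the implicit function theorem, and then solving the displayed compatibility relation for $\tilde z_\infty$ by a topological degree argument, using the a priori smallness and the continuous dependence to produce a fixed point $\tilde z_\infty=\tilde z_\infty(z_0)$.

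Finally I would collect the conclusions. For $z_0$ close to $e_*$ in $\cSM$ the construction yields a \emph{global} solution with $z(t)-z_\infty\in\tilde\EE(\infty,\delta)$, hence $z(t)\to z_\infty\in\cE$ in the topology of $\cSM$ at the exponential rate $\delta$; nonlinear stability follows from the accompanying smallness estimates, which keep the trajectory in a prescribed neighborhood of $e_*$. That the limit is genuinely an equilibrium is consistent with, and can be independently confirmed through, the strict Lyapunov functional $-\Phi$ established in Section~2, whose monotonicity confines all $\omega$-limit points to $\cE$.
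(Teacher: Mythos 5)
Your proposal is correct and follows essentially the same route as the paper's proof: the same spectral splitting $X_0={\sf N}(L)\oplus{\sf R}(L)$ from normal stability, the same shifted decomposition $z=\overline z+\tilde z+z_\infty$ into the fast problem \eqref{bar-abstract-infty} and the tangential evolution \eqref{tilde-abstract-infty} in the exponentially weighted classes, the same compatibility condition on the center part obtained by integrating $\partial_t{\sf x}=\omega P^c\PP\,\overline z$ to infinity, and crucially the same resolution of the $C^1$-versus-continuity obstruction in $\tilde z_\infty$ by freezing it in the implicit function theorem and then solving for it via finite-dimensional (Brouwer) degree, exploiting that the map is a small perturbation of the identity on $X_0^c$. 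The only deviations are cosmetic (a sequential contraction for $\overline w$ and variation of constants for $P^s\tilde z$ in place of the paper's single implicit-function-theorem system $\HH({\sf v},({\sf x}_\infty,{\sf y}_0))=0$ with triangular derivative, where for membership of the stable part in $\tilde\EE(\infty,\delta)$ one should invoke the weighted maximal regularity isomorphism \eqref{L-stable} rather than semigroup decay alone).
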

\begin{proof}
Based on the spectral properties of $L$ derived in Theorem~\ref{linstab-heat},
let $P^s$ denote the projection onto the stable subspace $X_0^s=P^s X_0={\sf R}(L)$ 
and $P^c$ the complementary projection onto the kernel ${\sf N}(L)=X_0^c=P^c X_0$.
Moreover, let $L^s$ be the part of $L$ in $X_0^s$.

Let ${\sf y} = P^s\tilde{z}$, ${\sf x}=P^c\tilde{z}$ and note that 
the equilibria over $P^c\cSX_*=P^cX_0$ may be parameterized according to
$$ z_\infty = {\sf x}_\infty +\psi({\sf x}_\infty) +\phi({\sf x}_\infty +\psi({\sf x}_\infty)),
\quad {\sf x}_\infty \in X^c_0,$$
by solving the nonlinear stationary problem
$$L^s y =\omega P^s\PP\phi({\sf x}+{\sf y})$$
by the implicit function theorem. Finally, let ${\sf y}_\infty = \psi({\sf x}_\infty)$.
Applying the projection $P^s$ to the equation for $\tilde{z}$ one obtains the problem
$$\partial_t {\sf y} + L^s {\sf y} =\omega P^s\PP\,\overline{z},\quad t>0,\quad {\sf y}(0)= {\sf y}_0- {\sf y}_\infty,$$
and for ${\sf x}$ analogously
$$ \partial_t {\sf x} = \omega P^c\PP\,\overline{z},\quad t>0, \quad {\sf x}(0) = {\sf x}_0-{\sf x}_\infty.$$
It is important to observe that
\begin{equation}
\label{L-stable}
(\partial_t + L^s,{\sf tr})\in {\rm Isom}(P^s\tilde\EE(\infty, \delta),L_p(\R_+,\delta; X^s_0)\times X^s_\gamma).
\end{equation}
Finally, the whole problem \eqref{bar-abstract-infty}-\eqref{tilde-abstract-infty} 
may be rewritten as $\HH({\sf v},({\sf x}_\infty,{\sf y}_0))=0$, 
where ${\sf v}= ( \overline{w}, {\sf y},{\sf x},{\sf x_0})$ and
\begin{align*}
\mbox{}\HH({\sf v}, ({\sf x}_\infty,{\sf y}_0))=
\left[
\begin{array}{c}
\big(\LL_\omega\overline{w}- N({\sf v}, {\sf x}_\infty), 
\overline{z}(0)-\phi({\sf x}_0+{\sf y}_0)+ \phi({\sf x}_\infty+{\sf y}_\infty)\big)\\[0.4em]
\big(\partial_t {\sf y} + L^s{\sf y} - \omega P^s\PP\,\overline{z}, 
{\sf y}(0)- {\sf y}_0 + {\sf y}_\infty \big)\\[0.4em]
{\sf x}(t) + \omega\int_t^\infty P^c\PP\,\overline{z}(s)\,ds\\[0.4em]
{\sf x}_0 -{\sf x}_\infty +  \omega\int_0^\infty P^c\PP\,\overline{z}(s)\,ds
\end{array} \right].
\end{align*}
One shows that the mapping
\begin{equation*}
\begin{aligned}
&\HH:\hat\EE(\infty,\delta)\times P^s\tilde\EE(\infty,\delta)\times P^c\tilde\EE(\infty,\delta)\times X_0^c
\times (X_0^c\times X^s_\gamma) \\
&\hspace{2cm}
\to (\FF(\infty,\delta)\times  X_\gamma )\times (L_p(\R_+;\delta, X^s_0)\times X^s_\gamma)
\times L_p(\R_+;\delta,X^c_0)\times X^c_0
\end{aligned}
\end{equation*}
is of class $C^1$ w.r.t $({\sf v}, {\sf y}_0)$, continuous w.r.t.~${\sf x}_\infty$,
and differentiable w.r.t.~${\sf x}_\infty $ at ${\sf x}_\infty=0$.
One notes that $X_0^c=X_\gamma^c=X_1^c$.
The Fr\'echet derivative $D_{\sf v}\HH(0,0)$ w.r.t.\ the variable ${\sf v}$ is given by the operator matrix
\begin{align}
\label{HH-derivative}
D_{\sf v}\HH(0,0) = \left[\begin{array}{cccc}
(\LL_\omega,{\sf tr}) & 0&0&0\\
*& (\partial_t+L^s,{\sf tr})&0&0\\
*&0& I&0\\
*&0&0&I
\end{array}\right].
\end{align}
Here the stars indicate bounded linear operators which, 
due to the triangular structure of the operator matrix,
do not need to be computed explicitly, as the diagonal terms of this operator matrix are invertible.
Therefore, by the implicit function theorem, see for instance \cite[Theorem 15.1]{Dei85},
 there are balls  $B_{X_0^c}(0,r)$ and $B_{X_\gamma^s}(0,r)$, and a continuous map 
$$\cT: B_{X_0^c}(0,r)\times B_{X_\gamma^s}(0,r)\to \hat{\EE}(\infty,\delta)\times \tilde{\EE}(\infty,\delta)\times X_0^c,
\quad \cT({\sf x}_\infty,{\sf y}_0) = (\overline{w},\tilde{z}, {\sf x}_0),$$
with $\cT(0,0)=0$. 
Then $(z,\pi):=(\overline{z} +\tilde z + z_\infty, \overline\pi +\tilde{\pi}+\pi_\infty)$
yields the unique solution of \eqref{full-problem} such that
\begin{equation}
\label{z-convergence} 
 z(t)\to z_\infty := {\sf x}_\infty + \psi({\sf x}_\infty) + \phi({\sf x}_\infty + \psi({\sf x}_\infty))
\quad \mbox{in  $X_\gamma$ \ \ as $ t\to\infty$}.
\end{equation}
One should observe that $\tilde z:={\sf x} +{\sf y} \in\tilde\EE(\infty,\delta)$ seemingly has 
less regularity than its counterpart $\overline{z}\in \EE(\infty,\delta)$.
However, a moment of reflection shows that any solution $\tilde z\in\tilde\EE(\infty,\delta)$  
of problem \eqref{tilde-abstract-infty} inherits the additional regularity $\tilde z\in\EE(\infty,\delta)$.

It follows from the implicit function theorem that $\cT$ is $C^1$ in ${\sf y_0}$, 
but only continuous in ${\sf x}_\infty$, unless more regularity for the parameter functions is required.
Nonetheless, $\cT$ is differentiable with respect to ${\sf x}_\infty$ at ${\sf x}_\infty=0.$

\medskip

The properties of $\cT$ can be summarized as follows:
given an equilibrium 
$$z_\infty = {\sf x}_\infty + \psi({\sf x}_\infty) + \phi({\sf x}_\infty + \psi({\sf x}_\infty))\in\cE$$
and an initial value ${\sf y}_0\in X^s_\gamma$, one determines with the help of the
implicit function theorem a value ${\sf x}_0$ in $X^c_0$ and
a solution $z$  of \eqref{full-problem} with initial value $({\sf x}_0,{\sf y}_0)$ 
such that $z(t)$ convergences to $z_\infty$ exponentially fast.
Exponential convergence is obtained by setting up the implicit function theorem
in a space of exponentially decaying functions.
Next, the mapping  
\begin{equation*}
S: B_{X_0^c}(0,r)\times B_{X_\gamma^s}(0,r)\to X_0^c\times X_\gamma^s,\quad
({\sf x}_\infty,{\sf y}_0)\mapsto({\sf x}_0,{\sf y}_0),
\end{equation*}
will be analyzed in more detail.
It is worthwhile to point out that this mapping 
gives rise to the construction of a stable foliation for \eqref{full-problem}
(by invariant, locally stable manifolds) in a neighborhood of the fixed equilibrium $e_*\in\cE$,
see \cite{PSW13}.

To complete the proof, the question which remains is whether $S$ is surjective near $(0,0)$.
Indeed, surjectivity would imply that for any initial value $({\sf x}_0,{\sf y}_0)$ in a sufficiently
small neighborhood of $(0,0)$ in $X^c_0\times X^s_\gamma$ 
there exists $z_\infty\in\cE$ and a unique solution $z$ to problem~\eqref{full-problem} 
which converges to $z_\infty$ at an exponential rate
in the topology of $\cSM$.

To prove surjectivity of $S$, degree theory will be employed. 
For this purpose, define a map $f: B_{X^c_0}(0,r) \times B_{X^s_\gamma}(0,r) \to X_0^c$ 
by means of $ f({\sf x}_\infty,{\sf y}_0)={\sf x}_0({\sf x}_\infty,{\sf y}_0)$. 
As has already been established, this map is continuous,
and it is close to the identity. 
In fact, differentiating the relation
\begin{equation*}
\HH_1(\cT({\sf x_\infty},{\sf y}_0),\cT({\sf x}_\infty,{\sf y}_0))=0
\end{equation*}
with respect to $({\sf x}_\infty,{\sf y}_0)$ at $(0,0)$
one obtains
$(D_1 \cT_1(0,0), D_2 \cT_1(0,0))=0.$
Here $\HH_1$ denotes the first line of $\HH$ and $\cT_1$ the first component of $\cT$, respectively.
This implies
$(D_1 \overline{z}(0,0), D_2\overline{z}(0,0))=0.$
From the representation
$$f({\sf x}_\infty,{\sf y}_0) = {\sf x}_\infty - \omega\int_0^\infty P^c\PP\,\overline{z}\,ds,$$
one infers that for every $\ve >0$ there is a constant $\rho>0$ such that
$$ |f({\sf x}_\infty,{\sf y}_0) -{\sf x}_\infty|_{X_0^c} 
\leq \omega\int_0^\infty |P^c\PP\,\overline{z}|_{X_0^c} \,ds
\leq \varepsilon (|{\sf y}_0|_{X^s_\gamma}+|{\sf x}_\infty|_{X_0^c}),
$$
whenever $|({\sf x}_\infty, {\sf y}_0)|\le \rho,$ with $\rho\le r$.
In the following, let $\varepsilon =1/3$ be fixed.
Here ${\sf y}_0$ only serves as a parameter, 
so we are in a finite dimensional setting and may employ the Brouwer degree, 
in particular its homotopy invariance. Define the homotopy $h(\tau,{x},{\sf y}_0) =\tau f({x},{\sf y}_0)+(1-\tau){x}$, and consider the degree
$$ {\sf deg }(h( \tau, \cdot,{\sf y}_0), B_{X_0^c}(0,r),\xi),\quad 
(\xi,{\sf y_0})\in B_{X_0^c}(0,\rho/2)\times B_{X_\gamma^s}(0,\rho/2).$$
For $\tau =0$ it is equal to one, hence  it is equal to one for all $\tau\in[0,1]$, provided there are no solutions of 
$h(\tau, {x},{\sf y}_0)=\xi$ with $|{x}|_{X^c_0}=r$.
To show this, suppose $h(\tau, {x},{\sf y}_0)=\xi,$ i.e., $\xi-{x} = \tau (f({x},{\sf y}_0)-{x})$, and $|{x}|_{X^c_0}=r$. Then by the above estimate
$$ r=|{x}|_{X_0^c}\leq |\xi|_{X_0^c} + |{x}-\xi|_{X^c_0}  
\leq |\xi|_{X_0^c} + \varepsilon (|{\sf y}_0|_{X^s_\gamma}+|{x}|_{X_0^c})  <r,$$
provided $ |\xi|_{X_0^c} < \rho/2$ and  $|{\sf y}_0|_{X^s_\gamma}<\rho/2.$
Hence,  ${\sf deg}(f(\cdot,{\sf y}_0),  B_{X_0^c}(0,r/2),\xi)$ equals one as well, showing that the 
equation $f({x}_\infty,{\sf y}_0)=\xi$ has at least one solution for
each $(\xi,{\sf y_0})\in B_{X_0^c}(0,\rho/2)\times B_{X_\gamma^s}(0,\rho/2)$, i.e., the mapping is surjectiv near zero.
This completes the proof of the theorem.
\end{proof}
\begin{remark}
It should be noted that the proof of surjectivity can be based on the inverse function theorem (in lieu of employing degree theory),
provided the mappings involved are $C^1$ in all variables. This property can be ensured by
asking for one more degree of regularity for the functions $\psi_i$ and $d_i,\upmu_i$ in 
condition \eqref{condition-H}.
\end{remark}

\section{Global Existence and Convergence}\label{sect-conv}
It has been shown in Section 2 that the negative total entropy is a strict Lyapunov functional 
for \eqref{NS-heat}-\eqref{kinematic}.
Therefore, the {\em $\omega$-limit sets}
\begin{equation*}
\begin{aligned}
\omega(u,\theta,\Gamma):=&\big\{ (u_\infty,\theta_\infty,\Gamma_\infty)\in \cSM: \\
&\ \exists\ t_n\nearrow\infty\ \mbox{s.t.}\ 
(u(t_n),\theta(t_n),\Gamma(t_n))\to (u_\infty,\theta_\infty,\Gamma_\infty)\ \mbox{in}\ \cSM \big\},
\end{aligned}
\end{equation*}
of solutions $(u,\theta,\Gamma)$ in $\cSM$ are contained in the manifold $\cE\subset \cSM$ of equilibria.
There are several obstructions for global existence:
\begin{itemize}
\item {\em regularity}: the norms of either $u(t)$, $\theta(t)$, $\Gamma(t)$  may become unbounded;
\vspace{1mm}
\item {\em geometry}: the topology of the interface may change;\\
    or the interface may touch the boundary of $\Omega$;\\
    or a part of the interface may contract to a point.
\end{itemize}

Let $\Gamma\subset \Omega$ be a hypersurface.
Then $\Gamma$ satisfies the \highlight{\em ball condition} if there is a number $r>0$ such
that for each point $p\in\Gamma$ there are balls $B(x_i,r)\subset \Omega_i$ such that
$\Gamma\cap \bar B(x_i,r)=\{p\}$ for $i=1,2$.
The subset  $\cMH^2(\Omega,r)\subset \cMH^2(\Omega)$
consists, by definition, of all hypersurfaces $\Gamma\in \cMH^2(\Omega)$
that satisfy the ball condition for a fixed radius $r>0$.

Let  $(u,\theta,\Gamma)$ be a solution of \eqref{NS-heat}-\eqref{kinematic}
on its maximal existence interval $[0,t_+)$.
Then $\Gamma(t)$ is said to satisfy the \highlight{\em uniform ball condition} if there exists a number $r>0$ such that
$\Gamma(t)\in\cMH^2(\Omega,r)$ for all $t\in [0,t_+)$.
Note that this condition
bounds the curvature of $\Gamma(t)$, prevents parts of $\Gamma(t)$ to shrink to points, to touch the outer
boundary $\partial \Omega$, and to undergo topological changes.

With this property, combining the local semiflow for problem \eqref{NS-heat}-\eqref{kinematic} with
the corresponding Lyapunov functional (i.e., the negative total entropy), relative compactness of bounded orbits,
and the convergence results from the previous section, one obtains the following global  result.
\goodbreak
\begin{theorem} 
\label{qual-heat} Let $p>n+2$ and suppose that condition \eqref{condition-H} holds.  

\medskip
\noindent
Suppose that $(u,\theta,\Gamma)$ is a solution of
\eqref{NS-heat}-\eqref{kinematic} in the state manifold $\cSM$ on its maximal time interval $[0,t_+)$.
Assume there are constants $M,m>0$  such that the  following conditions hold on $[0,t_+)$:
\begin{itemize}
\item[{\bf (i)}]  $|u(t)|_{{W^{2-2/p}_p}},\;|\theta(t)|_{W^{2-2/p}_p},\;|\Gamma(t)|_{W^{3-2/p}_p}\leq M<\infty$, 
\vspace{2mm}
\item[{\bf (ii)}]  $m\leq\theta(t)$;
\vspace{2mm}
\item[{\bf (iii)}] $\Gamma(t)$ satisfies the uniform ball condition.
\end{itemize}
Then $t_+=\infty$, i.e., the solution exists globally, and the solution converges 
in $\cSM$ to an equilibrium $(0,\theta_\infty,\Gamma_\infty)\in\cE$.
\smallskip\\
On the contrary, if $(u(t),\theta(t),\Gamma(t))$ is a global solution in $\cSM$ which converges to an equilibrium $(0,\theta_\infty,\Gamma_\infty)$  in $\cSM$ as $t\to\infty$, then {\rm (i)-(iii)} hold.
\end{theorem}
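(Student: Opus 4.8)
The plan is to prove the two implications separately, relying on three ingredients already available: the local semiflow furnished by \thmref{thm:local-ex}, the \emph{strict} Lyapunov structure of the negative total entropy $-\Phi$ established in Section~2, and the local convergence result of \thmref{nonlinear-stability}.

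For the forward direction I would assume (i)--(iii) and first show $t_+=\infty$. The decisive point is that the existence time $a=a(u_0,\theta_0,\Gamma_0)$ of \thmref{thm:local-ex} admits a uniform lower bound along the trajectory: condition (i) confines $(u(t),\theta(t),\Gamma(t))$ to a bounded subset of $X_\gamma$, condition (ii) keeps $\theta$ bounded away from zero so that $\kappa(\theta),\,d(\theta),\,\upmu(\theta)$ remain within fixed positive bounds and the system stays uniformly parabolic, and condition (iii) keeps the interface non-degenerate (no shrinking, no contact with $\partial\Omega$, no topological change). Hence there is $a_0>0$ such that the solution started at any $(u(t_0),\theta(t_0),\Gamma(t_0))$ persists on $[t_0,t_0+a_0]$; were $t_+<\infty$ this would permit continuation past $t_+$, contradicting maximality, so $t_+=\infty$.

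To obtain convergence, I would next invoke (i) together with the compact embeddings $W^{s}_p\hookrightarrow\hookrightarrow W^{s-\varepsilon}_p$ (and (iii), which controls the parameterization of $\Gamma(t)$ over a fixed reference configuration) to conclude that the orbit $\{(u(t),\theta(t),\Gamma(t)):t\ge 0\}$ is relatively compact in $\cSM$. Since $-\Phi$ is a strict Lyapunov functional, the LaSalle invariance principle places the (nonempty, connected) $\omega$-limit set inside the equilibrium manifold $\cE$. Choosing any $e_\infty=(0,\theta_\infty,\Gamma_\infty)$ in this limit set, the solution enters every $\cSM$-neighborhood of $e_\infty$ for suitable large times; there \thmref{nonlinear-stability} applies and forces the \emph{entire} solution to converge to a (possibly different) equilibrium at an exponential rate. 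For the converse I would argue that a convergent trajectory together with its limit is a compact subset of $\cSM$, hence bounded in the norms of (i); that the embedding $\cSM\hookrightarrow C(\bar\Omega)^{n+1}$ turns $\theta(t)\to\theta_\infty>0$ into a uniform lower bound for large $t$, which combined with continuity and positivity of $\theta$ on the initial compact portion yields (ii); and that $\Gamma_\infty$, being a finite union of disjoint interior spheres, satisfies the ball condition for some $r_0>0$, so that convergence $\Gamma(t)\to\Gamma_\infty$ in $W^{3-2/p}_p\hookrightarrow C^2$ (using $p>n+2$) propagates the uniform ball condition (iii) for a slightly smaller radius.

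The main obstacle is the passage from the abstract convergence statement of \thmref{nonlinear-stability} to the concrete moving-interface trajectory on the nonlinear state manifold $\cSM$: one must verify that once a subsequence enters a neighborhood of a limiting equilibrium the hypotheses of the normal-stability theorem hold in the correct $X_\gamma$-topology, so that the local mechanism can upgrade subsequential convergence to convergence of the full orbit. Securing the uniform lower bound $a_0$ on the existence time (the heart of the global-existence step) and the relative compactness of the orbit in the presence of the only $W^{3-2/p}_p$-regular free boundary are the technically delicate points.
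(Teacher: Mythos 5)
Your overall architecture---relative compactness of the orbit, the strict Lyapunov functional $-\Phi$ placing the $\omega$-limit set inside $\cE$, an application of \thmref{nonlinear-stability} once the solution enters a neighborhood of a limiting equilibrium, and a compactness argument for the converse---is exactly the paper's. The one genuine gap is in your global-existence step: you assert that the local existence time of \thmref{thm:local-ex} admits a lower bound $a_0>0$ uniformly on sets that are merely \emph{bounded} in $X_\gamma$, but that theorem only provides $a=a(u_0,\theta_0,\Gamma_0)$, and no such uniformity on bounded sets is available here. The obstruction is geometric: a family of interfaces bounded in $W^{3-2/p}_p$ need not be parameterizable over a single reference surface, so there is no single Hanzawa chart in which one could extract a uniform time; your later remark that (iii) ``controls the parameterization of $\Gamma(t)$ over a fixed reference configuration'' is too optimistic for the same reason.

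The paper's fix, which you should run \emph{before} the continuation argument rather than after it, derives the uniform time from compactness instead of boundedness. Using the level functions $\varphi_\Gamma$ and the fact that $\Gamma\mapsto\varphi_\Gamma$ is a homeomorphism of $\cMH^2(\Omega,r)$ onto a subset of $C^2(\bar\Omega)$, conditions (i) and (iii) show that $\Gamma([0,t_+))$ is bounded in $W^{3-2/p}_p(\Omega,r)$, hence relatively compact in $W^{3-2/p-\ve}_p(\Omega,r)$; one then covers the orbit of interfaces by \emph{finitely many} balls around reference surfaces $\Sigma_k$ and pulls $(u(t),\theta(t))$ back by the corresponding Hanzawa transforms $\Xi_k$, obtaining bounded, hence relatively compact, families in $W^{2-2/p-\ve}_p(\Omega\setminus\Sigma_k)^{n+1}$. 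Applying the local existence theorem to this compact set of initial configurations yields solutions on a \emph{common} interval $(0,a]$, and uniqueness together with continuous dependence then delivers simultaneously $t_+=\infty$ and relative compactness of the orbit in $\cSM$ itself (the $\ve$ loss of regularity is recovered through the instantaneous smoothing of the semiflow). From that point on, your Lyapunov/LaSalle step, the upgrade of subsequential to full convergence via \thmref{nonlinear-stability}, and your converse argument all coincide with the paper's proof.
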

\begin{proof}
It is well known that each $\Gamma\in\cMH^2(\Omega)$ admits a tubular neighborhood
$U_a:=\{x\in\R^n: {\rm dist}(x,\Gamma)<a\}$ of width $a=a(\Gamma)>0$ such that the signed distance function
$$d_\Gamma:U_a\to \R,\quad |d_\Gamma(x)|:={\rm dist}(x,\Gamma),$$ 
is well-defined and $d_\Gamma\in C^2(U_a,\R)$. 
Here  $d_\Gamma(x)<0$ 
iff $x\in\Omega_1\cap U_a$ by convention.
One then defines a {level function} $\varphi_\Gamma$ by means of
\begin{equation*}
\label{level}
\varphi_\Gamma(x):=
\left\{
\begin{aligned}
& d_\Gamma(x)\chi(3d_\Gamma(x)/a)+ {\rm sgn}\,(d_\Gamma(x))(1-\chi(3d_\Gamma(x)/a)),  &&x\in U_a,\\
& \chi_{\Omega_{\rm ex}}(x)-\chi_{\Omega_{\rm in}}(x),  &&x\notin U_a,
\end{aligned}
\right.
\end{equation*}
where $\Omega_{\rm ex}$ and $\Omega_{\rm in}$ denote the exterior  and interior component of 
$\R^n\setminus U_a$,
respectively,
and $\chi$ is a smooth cut-off function with $\chi(s)=1$ if $|s|<1$ and $\chi(s)=0$ 
if $|s|>2$.
The level function $\varphi_\Gamma$ is then of class $C^2$,
$\varphi_\Gamma(x)=d_\Gamma(x)$ for $x\in U_{a/3}$, and
$\varphi_\Gamma(x)=0$ iff $x\in \Gamma$.

Let $\cMH^2(\Omega,r)$ denote the subset of $\cMH^2(\Omega)$ which consists of all $\Gamma\in \cMH^2(\Omega)$ such that $\Gamma\subset \Omega$ satisfies the ball condition with fixed radius $r>0$. This implies in particular that ${\rm dist}(\Gamma,\partial\Omega)\geq 2r$  and all principal curvatures of $\Gamma\in\cMH^2(\Omega,r)$ are bounded by $1/r$. Furthermore, 
the level functions $\varphi_\Gamma $ are well-defined 
for $\Gamma\in\cMH^2(\Omega,r)$, and form a bounded subset of $C^2(\bar{\Omega})$. The map 
$$\Phi:\cMH^2(\Omega,r)\to C^2(\bar{\Omega}),\quad \Phi(\Gamma)=\varphi_\Gamma,$$ 
is a homeomorphism of the metric space $\cMH^2(\Omega,r)$ onto $\Phi(\cMH^2(\Omega,r))\subset C^2(\bar{\Omega})$, see \cite[Section 2.4.2]{PrSi16}.

Let $s-(n-1)/p>2$. For $\Gamma\in\cMH^2(\Omega,r)$
one defines $\Gamma\in W^s_p(\Omega,r)$ if $\varphi_\Gamma\in W^s_p(\Omega)$. In this case the local charts for $\Gamma$ can be chosen of class $W^s_p$ as well. A subset $A\subset W^s_p(\Omega,r)$ is said to be (relatively) compact, if $\Phi(A)\subset W^s_p(\Omega)$ is (relatively) compact. Finally, 
one defines
${\rm dist}_{W^s_p}(\Gamma_1,\Gamma_2):= |\varphi_{\Gamma_1}-\varphi_{\Gamma_2}|_{W^s_p(\Omega)}$
for $\Gamma_1,\Gamma_2\in \cMH^2(\Omega,r)$.

Suppose that the assumptions (i)-(iii) are valid. 
Then $\Gamma([0,t_+))\subset W^{3-2/p}_p(\Omega,r)$ is bounded, hence relatively compact in
$W^{3-2/p-\ve}_p(\Omega,r)$. Thus $\Gamma([0,t_+))$ can be covered by finitely many balls with centers $\Sigma_k$  such that
$${\rm dist}_{W^{3-2/p-\ve}_p}(\Gamma(t),\Sigma_j)\leq \delta \quad\text{for some $j=j(t)$, $t\in[0,t_+)$}.$$ 
Let $J_k=\{t\in[0,t_*):\, j(t)=k\}$. Using for each $k$ a Hanzawa-transformation $\Xi_k$, we see that the pull backs $\{(u(t,\cdot),\theta(t,\cdot))\circ\Xi_k:\, t\in J_k\}$ are bounded in $W^{2-2/p}_p(\Omega\setminus \Sigma_k)^{n+1}$, 
 hence relatively compact in $W^{2-2/p-\ve}_p(\Omega\setminus\Sigma_k)^{n+1}$.
Employing now \cite[Theorem 9.2.1]{PrSi16} one obtains solutions
 $(u^1,\theta^1,\Gamma^1)$ with initial configurations $(u(t),\theta(t),\Gamma(t))$ in the state manifold $\cSM$ on a common time interval, say $(0,a]$, and by uniqueness one has
 $$(u^1(a),\theta^1(a)\Gamma^1(a))=(u(t+a),\theta(u+a),\Gamma(t+a)).$$ 
 Continuous dependence implies then relative compactness of $\{(u(\cdot),\theta(\cdot),\Gamma(\cdot)):\, 0\leq t<t_+\}$ in $\cSM$; 
in particular $t_+=\infty$ and the orbit $(u,\theta,\Gamma)(\R_+)\subset\cSM$ is relatively compact.
The entropy is a strict Lyapunov functional, hence the limit set $\omega(u,\theta,\Gamma)$
of a solution is contained in the set $\cE$ of equilibria.
By compactness, $\omega(u,\theta,\Gamma)\subset \cSM$ is non-empty, 
hence the solution comes close to $\cE$. Finally, one may apply the convergence result 
Theorem \ref{nonlinear-stability} to complete the sufficiency part of the proof. 
Necessity follows by a compactness argument.
\end{proof}
\section{The isothermal problem}
In this section  the isothermal Navier-Stokes problem with surface tension~\eqref{NS-iso} will be considered.
It turns out that the main results for this problem 
concerning well-posedness and the stability analysis of equilibria
parallel those in Sections 2-4 for problem \eqref{NS-heat}-\eqref{kinematic}.
A decisive difference is caused by the fact that,
as temperature is neglected, the principles of thermodynamics do no longer apply.

The basic local well-posedness result for problem \eqref{NS-iso} reads as follows. 
\begin{theorem}
\label{thm:existence-isothermal}
Let $p>n+2$ and suppose that $\partial\Omega\in C^3$.
Assume the regularity conditions
\begin{equation*}
 u_0\in W^{2-2/p}_p(\Omega\setminus\Gamma_0), \quad \Gamma_0\in W^{3-2/p}_p,
\end{equation*}
and the compatibility conditions
\begin{equation*}
\begin{aligned}
&{\rm div}\, u_0=0 \;\;{\rm in}\;\; \Omega\setminus\Gamma_0,\quad u_0=0\;\;{\rm on}\;\; \partial\Omega, \\
&[\![u_0]\!]=0,\;\;\cP_{\Gamma_0}\,[\![\upmu D(u)\nu_{\Gamma_0}]\!]=0
\;\; {\rm on}\;\; \Gamma_0.
\end{aligned}
\end{equation*}
Then there exists a number $a=a(\Gamma_0,u_0)$ and a unique classical solution $(u,\pi,\Gamma)$
of \eqref{NS-iso} on the time interval $(0,a)$.
Moreover, 
${\mathcal M}=\bigcup_{t\in (0,a)}\{t\}\times \Gamma(t)$
is real analytic.
\end{theorem}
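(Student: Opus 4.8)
The plan is to follow the direct mapping method already employed for the thermal system in \thmref{thm:local-ex}, specialized to the isothermal case in which the temperature equation is absent. First I would fix the free boundary by a Hanzawa transformation: writing $\Gamma(t)=\{q+h(t,q)\nu_{\Gamma_0}(q):q\in\Gamma_0\}$ for an unknown height function $h$, I pull \eqref{NS-iso} back to the fixed domain $\Omega\setminus\Gamma_0$. This yields a quasilinear problem for $(u,\pi,h)$ whose principal part is the two-phase Stokes operator, with the linearized mean curvature $\sigma\cA_{\Gamma_0}h$ entering the normal stress balance and the linearized kinematic relation $\partial_t h-(u|\nu_{\Gamma_0})=F_h$ closing the system. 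All transformation Jacobians, the convective term $(u|\nabla)u$, and the curvature remainder are collected into nonlinearities that vanish to second order at the reference configuration, exactly as in \eqref{nonlin-u}--\eqref{nonlin-h}.

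Next, I would record the maximal $L_p$-regularity of the linearization. The relevant linear two-phase Stokes problem with surface tension and the kinematic boundary condition (the isothermal analogue of \eqref{lin-u}, \eqref{lin-h}, with the $\vartheta$-component deleted) should be an isomorphism between a solution space of the type $\EE(a)$ and the corresponding data space $\FF(a)$ together with the trace space $X_\gamma$ for the initial value, provided the compatibility conditions ${\rm div}\,u_0=0$, $u_0|_{\partial\Omega}=0$, $[\![u_0]\!]=0$, and $\cP_{\Gamma_0}[\![\upmu D(u_0)\nu_{\Gamma_0}]\!]=0$ hold; this is precisely the content of \cite[Chapter 8]{PrSi16}. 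The hypothesis $p>n+2$ is what secures the embedding $W^{2-2/p}_p\hookrightarrow C^1$, so that the nonlinearities map into the data space and are locally Lipschitz with small Lipschitz constant on short time intervals.

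With this isomorphism in hand, I would close the nonlinear problem by a contraction mapping argument (equivalently, the implicit function theorem) on a small interval $(0,a)$: the solution operator of the linearized problem, composed with the nonlinear right-hand side, is a self-map of a small ball in $\EE(a)$, and the quadratic structure of the nonlinearities makes it a strict contraction once $a$ is small enough, producing the unique classical solution $(u,\pi,\Gamma)$. For the real analyticity of $\mathcal{M}=\bigcup_t\{t\}\times\Gamma(t)$ I would invoke the parameter trick: rescale time and translate space by real parameters, observe that the transformed problem depends analytically on those parameters, apply the implicit function theorem to obtain analytic dependence of the solution on them, and differentiate to conclude joint real analyticity in $(t,x)$ along $\mathcal{M}$.

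The hard part will be establishing the maximal-regularity/isomorphism property of the linearized two-phase Stokes problem with the surface-tension coupling, and in particular verifying the Lopatinskii--Shapiro condition for the associated boundary symbol: the curvature term $\sigma\cA_{\Gamma_0}h$ couples the height function to the velocity through a higher-order, nonlocal boundary operator, so the usual parabolic boundary-value analysis must be carried out for the full Stokes--curvature system rather than for a scalar equation. Once that linear theory is secured, the nonlinear closure and the analyticity bootstrap are routine.
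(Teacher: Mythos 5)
Your proposal is correct and follows essentially the same route as the paper, which for this theorem simply defers to \cite{KPW13} and \cite[Sections 9.2 and 9.4]{PrSi16}: those references prove local well-posedness exactly via the Hanzawa transform, maximal $L_p$-regularity of the linearized two-phase Stokes problem with surface tension (including the Lopatinskii--Shapiro analysis you flag as the hard part, carried out in \cite[Chapter 8]{PrSi16}), a contraction/implicit-function argument using $p>n+2$, and the parameter trick for real analyticity of $\mathcal{M}$. No genuinely different decomposition or key lemma is involved, so there is nothing further to compare.
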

\begin{proof}
The reader is referred to \cite{KPW13}, see also \cite[Sections 9.2 and~9.4]{PrSi16}.
\end{proof}
\noindent
The state manifold for \eqref{NS-iso} is defind by
\begin{eqnarray}
\label{phasemanif0}
\cSM:=&&\hspace{-0.5cm}\Big\{(u,\Gamma)\in C(\bar{\Omega})^{n}\times \cMH^2:
   u\in W^{2-2/p}_p(\Omega\setminus\Gamma)^{n},\, \Gamma\in W^{3-2/p}_p,\nonumber\\
 && {\rm div}\, u=0\;\; \mbox{in}\;\; \Omega\setminus\Gamma,\quad u=0\;\; \mbox{on}\;\; \partial\Omega,
 \quad\cP_\Gamma\, [\![\upmu D(u)\nu_\Gamma]\!]=0 \;\;\mbox{on}\;\;\Gamma
 \Big\}.\nonumber
\end{eqnarray}
Applying Theorem~\ref{thm:existence-isothermal} and re-parameterizing the interface repeatedly, one
shows once more that problem~\eqref{NS-iso} generates a semiflow on $\cSM$.
\noindent
As in problem~\eqref{NS-heat}-\eqref{kinematic}
the pressure $\pi$ is determined for each time $t$ from
$(u,\Gamma)$ by means of the weak transmission problem 
\begin{equation*}
\begin{aligned}\label{pressure-reconstr}
\left.\left(\varrho^{-1}\nabla{\pi}\,\right|\nabla\phi\right)_{L_2(\Omega)}&=
\left.\left(\varrho^{-1}\upmu\Delta u-(u|\nabla) u\,\right|\nabla\phi\right)_{L_2(\Omega)},
\quad \phi\in H^{1}_{p^\prime}(\Omega),\\
[\![\pi]\!]&= \sigma H_\Gamma +([\![2\upmu D(u)\nu_\Gamma]\!]\,|\nu_\Gamma)
\;\;\mbox{on}\;\; \Gamma.
\end{aligned}
\end{equation*}
\medskip\\
\noindent
Suppose that the dispersed phase $\Omega_1$ consists of $m$ connected disjoint components,
 $\Omega_1=\bigcup_{k=1}^m \Omega_{1,k}$. Let
$\Gamma_{\!k}:=\partial\Omega_{1,k}$, $\Gamma=\bigcup_{k=1}^m\Gamma_{\!k}$,
and let ${\sf M}_k:=|\Omega_{1,k}|$ denote the volume of $\Omega_{1,k}$. 
Then one shows exactly as in Section~2.3
that problem \eqref{NS-iso} preserves the volume of each individual phase component. 

The \highlight{\em available energy} for problem~\eqref{NS-iso} is defined by
$$
\Phi_0(u,\Gamma):=\frac{1}{2}\int_{\Omega\setminus\Gamma}\varrho |u|^2\,dx +\sigma |\Gamma|.
$$
For the time derivative of $\Phi_0$ one obtains
\begin{align*}
\frac{d}{dt}\Phi_0 &= \int_\Omega \varrho(\partial_t u|u) \,dx
-\int_\Gamma \{[\![\frac{\varrho}{2}|u|^2 ]\!]+\sigma H_\Gamma\}V_\Gamma\,d\Gamma\\
&= -\int_\Omega  \{\varrho((u|\nabla)u|u) - ({\rm div}\,T | u)\}\,dx 
   -\int_\Gamma\big \{[\![\frac{\varrho}{2}|u|^2 ]\!] +\sigma H_\Gamma\big\}V_\Gamma\, d\Gamma\\
&= - 2\int_{\Omega} \upmu\, |D(u)|_2^2\,dx 
- \int_\Gamma\big\{[\![(Tu|\nu_\Gamma) ]\! ] +\sigma H_\Gamma V_\Gamma\big\}\,d\Gamma \\
&=- 2\int_{\Omega}\upmu\, |D(u)|_2^2\,dx, 
\end{align*}
showing that the available energy is decreasing.
Therefore, $\Phi_0$ constitutes a Lyapunov functional for \eqref{NS-iso}.
As in Section 2.5 one shows that $\Phi_0$ is a strict Lyapunov functional.
The same arguments as in Section 2 also imply that
the equilibria of \eqref{NS-iso}
consist of zero velocities, constant pressures in the phase components, and 
that the dispersed phase consists of a collection of non-intersecting balls in $\Omega$.
Consequently, the set $\cE$ of non-degenerate equilibria for \eqref{NS-iso} is given by
$$\cE=\{(0,\Sigma): \Sigma\in \cS\},$$
where $\cS$ is defined in~\eqref{def-S}. 
$\cE$ defines a real analytic manifold of dimension $m(n+1)$.

In analogy to Section 2.7 one shows that the critical points of the energy functional $\Phi_0$
under the constraints of ${\sf M}_k={\sf M}_{0,k}$ constant
correspond exactly to the equilibria of \eqref{NS-iso}, 
and  that all critical points are local minima of the energy functional
with the given constraints.
\goodbreak
\begin{theorem}
The following assertions hold for problem \eqref{NS-iso}.
\begin{itemize}
\vspace{1mm} \item
[{\bf (a)}] The phase volumes $|\Omega_{1,k}|$ are preserved. 
\vspace{1mm} \item
[{\bf (b)}] The energy functional $\Phi_0$ is a strict Lyapunov functional.
\vspace{1mm}\item
[{\bf (c)}] The {\em non-degenerate equilibria} are zero velocities, constant pressures in the
components of the phases,  and the interface is a finite union of non-intersecting spheres which 
do not touch the outer boundary $\partial\Omega$. 
\vspace{1mm} \item
[{\bf (d)}] The set $\cE$ of non-degenerate equilibria forms a real analytic manifold
of dimension $m(n+1)$, where $m$ denotes the number of connected components of $\Omega_1$.
\vspace{1mm} \item
[{\bf (e)}] The critical points of the energy functional
for prescribed phase volumes are precisely the equilibria of the system. 
\vspace{1mm} \item
[{\bf (f)}] All critical points  of the energy functional
for prescribed phase volumes are local minima.
\end{itemize}
\end{theorem}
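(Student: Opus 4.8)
The plan is to derive every item by transcribing the corresponding argument from Section~2 for the heat-conducting problem~\eqref{NS-heat}--\eqref{kinematic}, simply suppressing the temperature variable and replacing the entropy $\Phi$ by the available energy $\Phi_0$; indeed the discussion preceding the statement already carries out the key computations. I would begin with (a) and (b). Volume conservation is the divergence-theorem computation of Section~2.3, which uses only ${\rm div}\,u=0$ and the kinematic law $V_\Gamma=(u|\nu_\Gamma)$ and is therefore unchanged. The Lyapunov property is the displayed identity $\tfrac{d}{dt}\Phi_0=-2\int_\Omega\upmu\,|D(u)|_2^2\,dx\le 0$. For strictness I would argue as in Section~2.5: if $\Phi_0$ is constant on a time interval then $D(u)\equiv 0$ there, whence $\nabla u=0$ by Korn's inequality together with $[\![u]\!]=0$, and then $u\equiv 0$ by the no-slip condition on $\partial\Omega$; consequently $V_\Gamma=0$ and $\partial_t u=0$, so the configuration is an equilibrium.

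For (c) and (d) I would note that at an equilibrium $u\equiv 0$, so the momentum balance forces $\nabla\pi=0$ in each bulk phase and hence $\pi$ is piecewise constant. The normal stress balance then collapses to $\sigma H_\Gamma=[\![\pi]\!]$, which is constant on each connected component $\Gamma_k$. Since a closed connected embedded hypersurface of constant mean curvature bounding a region is a sphere, each component of $\Omega_1$ is a ball, giving (c). For (d) I would invoke the explicit real-analytic parameterization of a neighbouring sphere over a reference sphere constructed in Section~2.6 (center plus radius, i.e.\ $n+1$ parameters per sphere); as there is now no temperature degree of freedom, the count drops by one relative to the heat-conducting case, yielding $\dim\cE=m(n+1)$ and the stated representation $\cE=\{(0,\Sigma):\Sigma\in\cS\}$.

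For (e) and (f) I would run the Lagrange-multiplier computation of Section~2.7, now with the single family of constraints ${\sf M}_k={\sf M}_{0,k}$ and no energy constraint. The stationarity condition $\Phi_0'(u,\Gamma)+\sum_{k=1}^m\lambda_k{\sf M}_k'(\Gamma)=0$, tested first against velocity variations $v$, forces $u=0$; tested against normal variations $h_k\nu_\Gamma$ it gives that $\sigma H_{\Gamma_k}$ equals the constant $\lambda_k$ on each $\Gamma_k$, so the critical points are exactly the equilibria, which is (e). For (f) I would compute the second variation $\cD(e):=[\Phi_0+\sum_k\lambda_k{\sf M}_k]''(e)$ restricted to the constraint kernel $\bigcap_k\{\int_{\Gamma_k}h_k\,d\Gamma_k=0\}$, arriving at an expression of the form
\[
\langle \cD(e)z|z\rangle=\int_\Omega\varrho|v|^2\,dx-\sigma\sum_{k=1}^m\int_{\Gamma_k}(H_{\Gamma_k}'h_k)h_k\,d\Gamma_k .
\]
Using $H_{\Gamma_k}'=(n-1)/R_k^2+\Delta_{\Gamma_k}$, which is negative semi-definite on functions with mean value zero, this quadratic form is positive definite on the constraint kernel, so each constrained critical point is a local minimum.

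The one point requiring genuine care rather than mechanical translation is the sign bookkeeping in the last step. Because $\Phi_0$ is a \emph{decreasing} Lyapunov functional, in contrast to the nondecreasing entropy, the equilibria are local \emph{minima} of $\Phi_0$ rather than maxima: the factor $-1/\theta<0$ that accompanied the energy constraint in Section~2.7 is absent here, so no sign is flipped and the form above comes out positive rather than negative. The geometric inputs, namely that constant mean curvature forces spheres and that the zero eigenvalue of $H_{\Gamma_k}'$ (the degree-one spherical harmonics) accounts precisely for the translational freedom that prevents $\cE$ from being isolated, are identical to Section~2 and need no new argument.
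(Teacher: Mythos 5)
Your proposal is correct and follows essentially the same route as the paper, which itself establishes this theorem by transcribing the Section~2 arguments with the temperature suppressed and the entropy replaced by the available energy $\Phi_0$ (citing \cite[Proposition 5.2]{KPW13} and \cite[Theorem 3.1]{BoPr10b} for the original source). Your sign bookkeeping for (f) matches the paper's treatment exactly, including the fact that the quadratic form degenerates precisely on the translational directions $h_k=Y_k^j$, which is the same (semi-)definiteness convention the paper adopts in Section~2.7.
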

\noindent
This result was first established in \cite[Proposition~5.2]{KPW13}, 
see also  \cite[Theorem 3.1]{BoPr10b}.
It should be observed that the assertions in Remark 2.3 do also apply to the isothermal case.

\medskip
\noindent
Suppose  $(0,\Sigma)\in\cE$, with 
$\Sigma=\bigcup_{k=1}^m\Sigma_k$ and $\Sigma_k=\partial B(x_k,R_k)$
is a fixed equilibrium for problem~\eqref{NS-iso}.
In analogy to Section 3, and using the notation introduced there, 
one associates with system \eqref{NS-iso} the following linear problem
\begin{equation}
\begin{aligned}\label{lin-uh-iso}
\varrho\partial_t u-\upmu \Delta u +\nabla\pi &=\varrho f_u && \mbox{in} && \Omega\setminus{\Sigma},\\
{\rm div}\, u &=g_d && \mbox{in} && \Omega\setminus{\Sigma},\\
u&=0\quad && \mbox{on} && \partial\Omega,\\
[\![u]\!] &= 0      && \mbox{on} && {\Sigma},\\
 -[\![T\nu_\Sigma]\!] +\sigma (\cA_\Sigma h)\nu_\Sigma &=g_u && \mbox{on} && {\Sigma},\\
\partial_t h- (u|\nu_\Sigma ) &= f_h  &&\mbox{on} && \Sigma,\\
u(0)&=u_0 &&\mbox{in} && \Omega, \\
h(0)&=h_0 &&\mbox{on} && \Sigma,\\
\end{aligned}
\end{equation}
and the linear operator $L$,
$$ L(u,h):= \big(-\varrho^{-1}(\upmu \Delta u -\nabla\pi), -(u|\nu_\Sigma)\big),$$
defined on 
$X_0=L_{p,\sigma}(\Omega)\times W^{3-1/p}_p(\Sigma)$ with domain
\begin{equation*}
\begin{aligned}
X_1= {\sf D}(L)&=\{(u,h)\in H^2_p(\Omega\setminus\Sigma)^{n}\times W^{3-1/p}_p(\Sigma): 
{\rm div}\, u=0\; \mbox{ in }\; \Omega\setminus\Sigma,\\ 
&\hspace{0.7cm} u=0\;\mbox{ on }\;\partial\Omega,\;\;
[\![u]\!],\;\cP_\Sigma[\![\upmu  D(u)\nu_\Sigma]\!]=0\;\:\mbox{on}\:\; \Sigma
\}.
\end{aligned}
\end{equation*}
Here $\pi$ is again determined as the solution of the weak transmission problem
\begin{equation*}
\begin{aligned}
(\varrho^{-1}\nabla\pi|\nabla\phi)_{L_2(\Omega)} 
&=(\varrho^{-1}\upmu\Delta u|\nabla \phi)_{L_2(\Omega)},\quad \phi\in {H}^1_{p^\prime}(\Omega),\\
[\![\pi]\!] &=-\sigma \cA_\Sigma h + ([\![2\upmu D(u)\nu_\Sigma]\!]\,|\nu_\Sigma)\quad \mbox{ on } \Sigma.
\end{aligned}
\end{equation*}
The principal variable in \eqref{lin-uh-iso} is $z=(u,h)$, the dynamic inhomogeneities
are $f=(f_u,f_h)$, and the static ones are $g=(g_d, g_u)$. 
The eigenvalue problem associated with $L$ becomes
\begin{equation}
\begin{aligned}\label{ev-lin-iso}
\varrho\lambda  u-\upmu \Delta u +\nabla\pi &= 0 && \mbox{in} && \Omega\setminus{\Sigma},\\
{\rm div}\, u &= 0 && \mbox{in} && \Omega\setminus{\Sigma},\\
u&=0  && \mbox{on} && \partial\Omega,\\
[\![u]\!] &= 0      && \mbox{on} && {\Sigma},\\
 -[\![T\nu_\Sigma]\!] +\sigma (\cA_\Sigma h)\nu_\Sigma &= 0 && \mbox{on} && {\Sigma},\\
\lambda  h- (u|\nu_\Sigma) &= 0 &&  \mbox{on} && {\Sigma}.\\
\end{aligned}
\end{equation}
\goodbreak
\begin{theorem}
\label{linstab-iso} 
Let $e_*\in\cE$ be an equilibrium. 
Then the operator $L$ has the following properties.
\begin{itemize}
\vspace{1mm}
\item[{\bf (a)}]
$-L$ generates a compact, analytic $C_0$-semigroup in $X_0$ which has 
the property of maximal $L_p$-regularity. 
\vspace{1mm}
\item[{\bf (b)}]
The spectrum of $L$ consists of countably many eigenvalues of finite algebraic multiplicity. 
\vspace{1mm}
\item[{\bf(c)}] $-L$ has no eigenvalues $\lambda\neq0$ with nonnegative real part. 
\vspace{1mm}
\item[{\bf (d)}] $\lambda=0$ is a semi-simple eigenvalue of $L$ of multiplicity $m(n+1)$. 
\vspace{1mm}
\item[{\bf (e)}] The kernel ${\sf N}(L)$ of $L$ is isomorphic to the tangent space $T_{e_*}\cE$ of the manifold of equilibria $\cE$ at $e_*$.
\end{itemize}
Hence, each equilibrium $e_*\in\cE$ is normally stable.
\end{theorem}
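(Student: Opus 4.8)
The plan is to follow the scheme of the proof of \thmref{linstab-heat} verbatim, observing that the isothermal problem \eqref{NS-iso} is precisely the $(u,h)$-block of the heat-conducting problem with the temperature component deleted; since the eigenvalue problems \eqref{ev-lin-u}--\eqref{ev-lin-theta} already decoupled there, no genuinely new phenomena arise and the argument only becomes shorter. Assertion~(a) is quoted directly: by the two-phase Stokes theory in \cite{PrSi16} (see also \cite{KPW13}) the operator $L$ generates an analytic $C_0$-semigroup with maximal $L_p$-regularity, and its resolvent is compact because ${\sf D}(L)=X_1$ embeds compactly into $X_0$. Assertion~(b) is then immediate, as an operator with compact resolvent has spectrum consisting only of isolated eigenvalues of finite algebraic multiplicity.

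For (c) I would test the first equation of \eqref{ev-lin-iso} against $u$ in $L_2(\Omega)$ and integrate by parts, using $[\![u]\!]=0$, the stress condition $[\![T\nu_\Sigma]\!]=\sigma(\cA_\Sigma h)\nu_\Sigma$, and the kinematic relation $(u|\nu_\Sigma)=\lambda h$, to reach the identity
\[
0=\lambda|\varrho^{1/2}u|^2_{L_2(\Omega)}+2|\upmu^{1/2}D(u)|^2_{L_2(\Omega)}+\sigma\bar\lambda(\cA_\Sigma h|h)_{L_2(\Sigma)},
\]
exactly as in \eqref{evid-1}. Taking real and imaginary parts as in \eqref{evid-2} shows that any eigenvalue with ${\rm Re}\,\lambda\ge0$ must be real and, by Korn's inequality together with the no-slip condition, forces $D(u)=0$, $u=0$, hence $h=0$ when ${\rm Im}\,\lambda\neq0$. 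For real $\lambda>0$ the divergence theorem on each component $\Omega_{1,k}$ gives $\lambda\int_{\Sigma_k}h\,d\Sigma_k=0$, so the mean value of $h$ vanishes on every component; since $\cA_\Sigma$ is positive semi-definite on mean-zero functions, the identity then forces $\lambda=0$, a contradiction.

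For (d), setting $\lambda=0$ yields $D(u)=0$, whence $u=0$ by Korn's inequality and the no-slip condition, so the pressure is constant in each phase and $\cA_{\Sigma_k}h_k=-[\![\pi]\!]/\sigma$ is constant on each sphere. Because the equation $\cA_{\Sigma_k}h_k=\text{const}$ has solution space spanned precisely by the constant $Y_k^0$ and the degree-one spherical harmonics $Y_k^j$ (the eigenfunctions of $\cA_{\Sigma_k}$ for the eigenvalues $-(n-1)/R_k^2$ and $0$), the kernel is spanned by $e_{0k}=(0,Y_k^0)$ and $e_{jk}=(0,Y_k^j)$, giving $\dim{\sf N}(L)=m(n+1)$. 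Semi-simplicity follows by solving $L(u,h)=\sum_{j,k}\alpha_{jk}e_{jk}+\sum_k\beta_k e_{0k}$: integrating the divergence equation over $\Omega_{1,k}$ forces $\beta_k=0$; testing the Stokes equation against $u$ and using $\cA_\Sigma Y_k^j=0$ together with the self-adjointness of $\cA_\Sigma$ makes the boundary term vanish, so $D(u)=0$ and $u=0$; then $-(u|\nu_\Sigma)=\sum_{j,k}\alpha_{jk}Y_k^j=0$ forces all $\alpha_{jk}=0$ by linear independence of the spherical harmonics.

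Finally (e) follows from the inclusion $T_{e_*}\cE\subset{\sf N}(L)$ combined with the dimension count $\dim{\sf N}(L)=\dim T_{e_*}\cE=m(n+1)$, and normal stability is the conjunction of (c)--(e). The only substantive difference from the heat-conducting case is the absence of a temperature degree of freedom, which lowers the multiplicity of the eigenvalue $0$ from $m(n+1)+1$ to $m(n+1)$; I expect the main (and really the only nonroutine) obstacle to be the semi-simplicity step in (d), where the cancellation of the interfacial term rests on the fact that the degree-one spherical harmonics lie in the kernel of the self-adjoint operator $\cA_\Sigma$.
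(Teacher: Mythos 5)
Your proposal is correct and coincides with the paper's own approach: the paper proves \thmref{linstab-iso} precisely by repeating the argument of \thmref{linstab-heat} with all temperature-related quantities dismissed, noting only that the kernel is now spanned by $e_{jk}=(0,Y_k^j)$ and $e_{0k}=(0,Y_k^0)$, so that ${\rm dim}\,{\sf N}(L)=m(n+1)$. Your expanded details --- the energy identity, the real/imaginary part argument, the mean-value trick via the divergence theorem, and the semi-simplicity step with the interfacial term cancelling because $\cA_\Sigma Y_k^j=0$ and $\cA_\Sigma$ is self-adjoint --- reproduce verbatim the corresponding steps in the paper's proof of \thmref{linstab-heat}.
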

\begin{proof}
The proof proceeds in the same way as the corresponding proof of Theorem~\ref{linstab-heat}, 
with the only difference that here all  quantities and assertions relating to the temperature are dismissed.
One verifies, for instance, that the kernel of $L$ is spanned by the functions
$e_{jk}=(0,Y_k^j)$ 
with $Y_k^j$ the spherical harmonics  of degree one for the spheres $\Sigma_k$, 
$j=1,\cdots,n$, $k=1,\cdots,m$, 
and $e_{0,k}=(0,Y_k^0)$, where $Y_k^0$ equals one on $\Sigma_k$ and zero elsewhere.
Hence the dimension of the null space ${\sf N}(L)$ is $m(n+1)$. 
\end{proof}
The main theorem of this chapter concerning the stability of equilibria reads as follows.
\begin{theorem} \label{nonlinear-stability-iso}
Let $p>n+2$.
Then every equilibrium $e_*=(0,\Sigma)\in\cE$
is nonlinearly stable in the state manifold $\cSM$. 
Any solution with initial value close to $e_*$ in $\cSM$ exists globally
and converges in $\cSM$ to a possibly different stable equilibrium $e_\infty\in\cE$ at an exponential rate.
\end{theorem}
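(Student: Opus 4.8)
The plan is to follow the blueprint of the proof of \thmref{nonlinear-stability}, systematically discarding every quantity and every equation pertaining to the temperature; the relevant strict Lyapunov functional is now the available energy $\Phi_0$ rather than the entropy, but this plays no role in the local stability argument. First I would record the spectral data furnished by \thmref{linstab-iso}: the eigenvalue $0$ of $L$ is semi-simple with ${\sf N}(L)=T_{e_*}\cE$ of dimension $m(n+1)$, there is a number $\delta_0>0$ with ${\rm Re}\,\sigma(-L)\cap(-\delta_0,0)=\emptyset$, and $-L$ generates an analytic $C_0$-semigroup enjoying maximal $L_p$-regularity. I would then fix $0<\delta<\delta_0$ and work throughout in the exponentially weighted spaces $\EE(\infty,\delta)$, $\FF(\infty,\delta)$, $\tilde\EE(\infty,\delta)$.

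Next I would apply the Hanzawa transform over the reference sphere configuration $\Sigma$ to rewrite \eqref{NS-iso} as the isothermal counterpart of the nonlinear system \eqref{nonlin-u}--\eqref{nonlin-h}, that is, without the $\vartheta$-equation and without $G_\theta$, whose nonlinearities are of class $C^1$ and vanish together with their first derivatives at the reference equilibrium. The state manifold $\cSM_*$ near $e_*$ is then parameterized over its tangent space $\cSX_*$ exactly as in \thmref{tangent-space-SM}, now using only the solvability of the auxiliary two-phase Stokes problem of Section~4.3 in place of the coupled Stokes/heat system; this produces a $C^1$ map $\Phi={\rm id}+\phi$ with $(\phi(0),\phi'(0))=0$. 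I would split a solution as $z=\overline{z}+\tilde{z}+z_\infty$, with the fast $\overline{z}$-dynamics (driven by $\omega$ large) absorbing the stationary boundary conditions and $\tilde{z}\in\cSX_*$ carrying the genuine dynamics, and decompose $\tilde z={\sf x}+{\sf y}$ with ${\sf x}=P^c\tilde z$, ${\sf y}=P^s\tilde z$ and $L^s$ the stable part of $L$.

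The heart of the argument is to recast the shifted systems (the isothermal forms of \eqref{bar-shifted}--\eqref{tilde-shifted}) as a single equation $\HH({\sf v},({\sf x}_\infty,{\sf y}_0))=0$ and to invert it near zero. The Fr\'echet derivative $D_{\sf v}\HH(0,0)$ has the triangular form \eqref{HH-derivative}, whose diagonal blocks $(\LL_\omega,{\sf tr})$ and $(\partial_t+L^s,{\sf tr})$ are isomorphisms by maximal $L_p$-regularity and the spectral gap \eqref{L-stable}, so the implicit function theorem yields the solution map $\cT$ and the exponential convergence $z(t)\to z_\infty$ in $\cSM$. Here the isothermal problem is genuinely simpler than the heat-conducting one: since $\upmu$ is a constant and no coefficient depends on an (absent) temperature, the nonlinearities are $C^1$ jointly in the state and in the limiting parameter ${\sf x}_\infty$, so the loss of one derivative in ${\sf x}_\infty$ that forced the use of degree theory in \thmref{nonlinear-stability} does not arise; accordingly, surjectivity of the parameter-to-data map $S:({\sf x}_\infty,{\sf y}_0)\mapsto({\sf x}_0,{\sf y}_0)$ can be read off directly from the inverse function theorem, as already observed in the remark following \thmref{nonlinear-stability}. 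The main obstacle I expect is the one shared with the heat case: solutions live on the nonlinear manifold $\cSM$ because of the nonlinear compatibility conditions, so the parameterization of $\cSM_*$ over $\cSX_*$ and the careful bookkeeping of the regularity classes of $\overline{z}$ versus $\tilde{z}$ — in particular the hidden regularity gain $\tilde z\in\EE(\infty,\delta)$ for solutions a priori only in $\tilde\EE(\infty,\delta)$ — must be established before the fixed-point scheme can be closed.
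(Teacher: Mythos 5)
Your proposal is correct and, in its overall architecture, coincides with the paper's proof, which is given purely by reference: the paper states that the argument of \thmref{nonlinear-stability} carries over verbatim once every quantity and assertion relating to the temperature is discarded, and that reduction is precisely what you carry out (spectral data from \thmref{linstab-iso} with kernel of dimension $m(n+1)$, parameterization of $\cSM_*$ over $\cSX_*$ using only the two-phase Stokes auxiliary problem of Section~4.3, the decomposition $z=\overline z+\tilde z+z_\infty$ with $\tilde z={\sf x}+{\sf y}$, the triangular derivative \eqref{HH-derivative}, and the regularity upgrade from $\tilde\EE(\infty,\delta)$ to $\EE(\infty,\delta)$). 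The one point where you genuinely depart from the paper's referenced proof is the surjectivity of $S:({\sf x}_\infty,{\sf y}_0)\mapsto({\sf x}_0,{\sf y}_0)$: the proof of \thmref{nonlinear-stability}, and hence the paper's proof here, runs through the Brouwer-degree homotopy argument, whereas you invoke the inverse function theorem on the grounds that with $\varrho,\upmu$ constant the Hanzawa nonlinearities are $C^1$ (indeed real analytic) jointly in the state variables and in the limiting parameter ${\sf x}_\infty$. This substitution is legitimate and is explicitly sanctioned by the remark following \thmref{nonlinear-stability}, which notes that degree theory may be replaced by the inverse function theorem whenever the mappings are $C^1$ in all variables; in the heat-conducting case this required one additional degree of regularity of $\psi_i,d_i,\upmu_i$, while in the isothermal case it holds automatically, since the only source of the loss of differentiability in ${\sf x}_\infty$ was the composition with the temperature-dependent coefficients. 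What the degree-theoretic route buys is robustness under minimal coefficient regularity; what your route buys here is a shorter argument that in addition yields local uniqueness of the map ${\sf x}_\infty\mapsto{\sf x}_0$, which degree theory alone does not provide.
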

\begin{proof}
The proof proceeds in the same way as the corresponding proof for Theorem~\ref{nonlinear-stability},
with the obvious modification that all quantities and assertions relating to the temperature
are to be disregarded.
\end{proof}
Analogous assertions as in Theorem~\ref{qual-heat} hold for solutions
$(u(t),\Gamma(t))$ of the isothermal problem \eqref{NS-iso},
again with the obvious modification that the temperature variable is dropped,
see also \cite[Theorem 7.1]{KPW13}.

\section{The two-phase Stokes flow with surface tension}
In this section the two-phase
quasi-stationary Stokes problem with surface tension~\eqref{Stokes-flow} will be considered.
This problem is considerably easier to analyze than  
problem~\eqref{NS-heat}-\eqref{kinematic},
or problem \eqref{NS-iso}. In fact, it turns out that in this case, the only system variable is the unknown 
hypersurface $\Gamma$. In order to obtain this reduction,   
the two-phase Stokes problem
\begin{equation}
\label{Stokes-stationary}
\begin{aligned}
-\upmu\Delta u +\nabla \pi &=0 &&\mbox{in} &&\Omega\setminus\Gamma,\\
{\rm div}\, u &=0 &&\mbox{in} && \Omega\setminus\Gamma,\\
u &=0 &&\mbox{on} && \partial\Omega, \\
[\![u]\!]&=0 &&\mbox{on} && \Gamma, \\
- [\![T\nu_\Gamma]\!]&=g\nu_\Gamma &&\mbox{on} && \Gamma
\end{aligned}
\end{equation}
will play an important role.
One shows  that \eqref{Stokes-stationary} admits for each $g\in W^{1-1/p}_p(\Gamma)$ 
a unique solution $(u,\pi)$ (up to constants in the pressure) with regularity
$$(u,\pi)\in H^2_p(\Omega\setminus\Gamma)\times \dot H^1_p(\Omega\setminus\Gamma),$$
where $1<p<\infty$, and $\dot H^1_p$ denotes the homogeneous Sobolev space of order one.

For a given function $g\in W^{1-1/p}_p(\Gamma)$ let $(u,\pi)$ be the solution of \eqref{Stokes-stationary}.
Then  the Neumann-to-Dirichlet operator $N_\Gamma:W^{1-1/p}_p(\Gamma)\to W^{2-1/p}_p(\Gamma) $ is defined by
$$ N_\Gamma g := (u|\nu_\Gamma).$$
The following results hold for $N_\Gamma$.
\goodbreak
\begin{proposition}
\label{ND-Stokes-properties}
Suppose $\partial\Omega\in C^3$ and  $\Gamma\in\cMH^2(\Omega)$ consists of $m$ components, 
$\Gamma=\bigcup_{k=1}^m \Gamma_k$. Then the operator $N_\Gamma$ has the following properties.
\begin{itemize}
\item
[{\bf(a)}] 
$(N_\Gamma g|h)_{L_2(\Gamma)}=(g|N_\Gamma h)_{L_2(\Gamma)}$, \;\;$g,h\in W^{1/2}_2(\Gamma)$.
\vspace{2mm}\item
[{\bf(b)}] 
$(N_\Gamma g|g)_{L_2(\Gamma)}=2\int_\Omega \upmu |D(u)|_2^2\,dx$, where $(u,\pi)$ is the solution of 
       \eqref{Stokes-stationary}
\vspace{2mm}\item
[{\bf(c)}] 
Let ${\sf e}_k$ be the function which is one on $\Gamma_k$ and zero elsewhere.
Then 
$$\mbox{$(N_\Gamma g|{\sf e}_k)_{L_2(\Gamma)}=0$ for each $g\in W^{1/2}_2(\Gamma)$ and each $1\le k \le m$}.$$
In particular, $N_\Gamma {\sf e}_k=0$ for each $k$, and $N_\Gamma g$ has mean value zero 
for each function $g\in W^{1/2}_2(\Gamma)$.   
\vspace{2mm}\item
 [{\bf(d)}] ${\sf N}(N_\Gamma) = {\rm span}\{{\sf e}_1,\cdots,{\sf e}_m\}.$ 
\end{itemize}
 \end{proposition}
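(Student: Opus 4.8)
The plan is to derive all four assertions from a single variational identity for the transmission problem \eqref{Stokes-stationary} and then read off each claim. First I would fix data $g,h\in W^{1/2}_2(\Gamma)$, let $(u,\pi)$ and $(v,q)$ be the corresponding solutions of \eqref{Stokes-stationary}, and test the momentum balance $-\upmu\Delta u+\nabla\pi=0$ (equivalently ${\rm div}\,T=0$ in each phase) against $v$, integrating by parts over $\Omega_1$ and $\Omega_2$ separately. Using that $v=0$ on $\partial\Omega$, that ${\rm div}\,v=0$, that $D(u)$ is symmetric, and that $[\![v]\!]=0$, only the interfacial term survives, and the stress condition $-[\![T\nu_\Gamma]\!]=g\nu_\Gamma$ turns it into
\begin{equation*}
2\int_\Omega \upmu\,D(u):D(v)\,dx=\int_\Gamma g\,(v|\nu_\Gamma)\,d\Gamma=(g\,|\,N_\Gamma h)_{L_2(\Gamma)}.
\end{equation*}
Since the left-hand side is symmetric in $u$ and $v$, and therefore in $g$ and $h$, assertion (a) follows at once, and the choice $h=g$ (so that $v=u$) yields assertion (b). The only care required here is the sign bookkeeping in the jump convention and the justification of the boundary traces, which are routine given the stated regularity; the identity can be established first for smooth data and then extended by density to $W^{1/2}_2(\Gamma)$.

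For (c) I would integrate the incompressibility condition over the component $\Omega_{1,k}$ bounded by $\Gamma_k$ and apply the divergence theorem,
\begin{equation*}
(N_\Gamma g\,|\,{\sf e}_k)_{L_2(\Gamma)}=\int_{\Gamma_k}(u|\nu_\Gamma)\,d\Gamma=\int_{\Omega_{1,k}}{\rm div}\,u\,dx=0,
\end{equation*}
which holds for every $g$ and every $k$; summing over $k$ shows that $N_\Gamma g$ has mean value zero. Specializing to $g={\sf e}_k$ gives $(N_\Gamma{\sf e}_k\,|\,{\sf e}_k)_{L_2(\Gamma)}=0$, so by (b) the associated solution satisfies $D(u)=0$, whence $u\equiv 0$ by Korn's inequality together with the no-slip condition on $\partial\Omega$ and continuity across $\Gamma$; therefore $N_\Gamma{\sf e}_k=(u|\nu_\Gamma)=0$. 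Alternatively, $N_\Gamma{\sf e}_k=0$ follows directly from the self-adjointness (a) combined with the mean-value-zero property just established.

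Finally, assertion (d) combines the two inclusions. The inclusion ${\rm span}\{{\sf e}_1,\dots,{\sf e}_m\}\subset{\sf N}(N_\Gamma)$ is precisely what (c) yields. For the converse I would assume $N_\Gamma g=0$; then $(N_\Gamma g\,|\,g)_{L_2(\Gamma)}=0$, so (b) forces $D(u)=0$, i.e.\ $u$ is an infinitesimal rigid motion on each component of $\Omega\setminus\Gamma$. On the continuous phase $\Omega_2$ the no-slip condition on $\partial\Omega$ annihilates this rigid motion, so $u=0$ on $\Omega_2$; continuity $[\![u]\!]=0$ then propagates $u=0$ to $\Gamma$, and a rigid motion on $\Omega_{1,k}$ that vanishes on the hypersurface $\Gamma_k$ must vanish throughout $\Omega_{1,k}$, so $u\equiv 0$ in $\Omega$. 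The momentum equation now gives $\nabla\pi=0$, so $\pi$ is constant in each bulk component, and the stress boundary condition collapses to $g=[\![\pi]\!]$, which is constant on each $\Gamma_k$; hence $g\in{\rm span}\{{\sf e}_1,\dots,{\sf e}_m\}$. The main obstacle is not a single hard estimate but the clean assembly of the variational identity — the sign in the jump convention and the trace/density justification — together with, in (d), the geometric propagation of $u=0$ from $\partial\Omega$ through $\Omega_2$ into the dispersed drops.
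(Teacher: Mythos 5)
Your proposal is correct and follows essentially the same route as the paper: the identity $2\int_\Omega \upmu\, D(u)\!:\!D(v)\,dx = (g\,|\,N_\Gamma h)_{L_2(\Gamma)}$ obtained by integration by parts gives (a)--(b), the divergence theorem applied on each $\Omega_{1,k}$ gives (c), and for (d) the same rigidity chain ($D(u)=0$ by (b), then $u\equiv 0$ via Korn and no-slip, then $\pi$ locally constant and $g=[\![\pi]\!]$ constant on each $\Gamma_k$) establishes the reverse inclusion. The only cosmetic deviation is in (c), where you conclude $N_\Gamma {\sf e}_k=0$ by an energy/Korn argument (or by symmetry), while the paper argues via the symmetry in (a) together with density of $W^{1/2}_2(\Gamma)$ in $L_2(\Gamma)$; both are valid.
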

\begin{proof}
Let $g,h\in W^{1/2}_2(\Sigma)$ be given, and let
$(u,\pi)$ denote the solution of 
\eqref{Stokes-stationary} corresponding to $g$, and $(v,q)$ the solution corresponding to $h$, respectively.
Then one obtains 
\begin{equation*}
\begin{aligned}
(N_\Gamma g|h)_{L_2(\Gamma)}
&= \int_\Gamma (u|h\nu_\Gamma)\,d\Gamma
= -\int_\Gamma [\![(u|T(v,q)\nu_\Gamma) ]\!]\,d\Gamma \\
&= \int_\Omega {\rm div}\,(T(v,q)u)\,dx
= 2\int_\Omega \upmu\, D(v)\!:\!D(u)\,dx
\end{aligned}
\end{equation*}
and the assertions in (a)-(b) follow at once.
Here, $D(u)\!:\!D(v)={\rm trace}\, (D(u)D(v))$ denotes the Frobenius inner product of the (symmetric) matrices $D(u)$ and $D(v)$.

\medskip
\noindent
(c) Let $g\in W^{1/2}_2(\Sigma)$ be given, and let $(u,\pi)$ be the solution of~\eqref{Stokes-stationary}.
By the divergence theorem
\begin{equation*}
(N_\Gamma g|{\sf e}_k)_{L_2(\Sigma)}
=\int_{\Gamma_k} (u|\nu_\Gamma)\,d\Gamma_k = \int_{\Omega_{1,k}}{\rm div}\, u\,dx=0,
\end{equation*}
with $\Omega_{1,k}$ the region enclosed by $\Sigma_k$.
Therefore, by density of $W^{1/2}_2(\Gamma)$ in $L_2(\Gamma)$,
 $(g|N_\Gamma{\sf e}_k)_{L_2(\Sigma)}=0$ for all $g\in L_2(\Sigma)$,
and hence $N_\Gamma {\sf e}_k=0$.

\medskip
\noindent
(d)
It remains to show that ${\sf N}(N_\Gamma)\subset {\rm span}\{{\sf e_1},\cdots,{\sf e_m}\}$.
Suppose  $g\in {\sf N}(N_\Gamma)$. 
It then follows from part (b) that $D(u)=0$.
Korn's inequality and the no-slip boundary condition readily imply that $u=0$ on $\Omega$,
and hence $\pi$ is constant on connected components of $\Omega$.
This shows that $g|_{\Gamma_k}=[\![\pi]\!]|_{\Gamma_k}$ is constant on each boundary component $\Gamma_k$,
and hence $g\in{\rm span}\{{\sf e_1},\cdots,{\sf e_m}\}$. 
\end{proof}
By means of the Neumann-to-Dirichlet operator $N_\Gamma$, problem~\eqref{Stokes-flow} can be reformulated as 
a geometric evolution equation
\begin{equation}
\label{geometric}
V_\Gamma = \sigma N_\Gamma H_\Gamma,\quad \Gamma(0)=\Gamma_0.
\end{equation}
In order to study problem \eqref{geometric}
one may parameterize $\Gamma$ over an analytic reference manifold $\Sigma$ which is $C^2$ close to $\Gamma_0$.
Problem \eqref{geometric} can then be cast as a
quasilinear evolution equation
\begin{equation}
\label{quasilinear}
\partial_t h +A(h)h=F(h),\quad h(0)=h_0,
\end{equation}
where $h(t)$ denotes the height function which parameterizes $\Gamma(t)$, that is,
$$\Gamma(t)=\{q+h(t,q)\nu_\Sigma(q): q\in \Sigma,\; t\ge 0\}.$$
The resulting problem \eqref{quasilinear} is amenable to the theory of maximal $L_p$-regularity 
for quasilinear parabolic evolution equations, 
see for instance \cite[Chapter 5]{PrSi16} for a comprehensive account of this theory.
The following basic well-posedness result holds true.
\begin{theorem}
\label{Stokes-existence}
Suppose $p>n+2$.
Then for each $\Gamma_0\in W^{3-2/p}_p$ there is a number $a=a(\Gamma_0)$ 
and a unique classical solution $\Gamma=\{\Gamma(t):t\in (0,a)\}$ for \eqref{geometric}.
Moreover,
${\mathcal M}=\bigcup_{t\in (0,a)}\{t\}\times \Gamma(t)$ is real analytic.
\end{theorem}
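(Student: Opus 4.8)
The plan is to recast the geometric evolution equation \eqref{geometric} as the abstract quasilinear parabolic problem \eqref{quasilinear} and then to invoke the theory of maximal $L_p$-regularity for such equations, see \cite[Chapter 5]{PrSi16}. First I would fix an analytic reference manifold $\Sigma$ that is $C^2$-close to $\Gamma_0$ and parameterize neighboring surfaces by a height function $h$ via $\Gamma(t)=\{q+h(t,q)\nu_\Sigma(q):q\in\Sigma\}$. Under this parameterization the normal velocity takes the form $V_\Gamma=a(h)\partial_t h$ with a smooth, positive function $a(h)$ (close to $1$ for $h$ small), the mean curvature $H_\Gamma=H_\Gamma(h)$ is a quasilinear second-order operator whose principal part agrees to leading order with $-\Delta_\Sigma$, and the Neumann-to-Dirichlet operator $N_\Gamma=N_\Gamma(h)$ depends on $h$ through the coefficients of the underlying Stokes problem \eqref{Stokes-stationary}. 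Writing $H_\Gamma(h)=-P(h)h+q(h)$, where $P(h)$ denotes the (positive) second-order principal part, equation \eqref{geometric} assumes the form \eqref{quasilinear} with
\[
A(h):=\sigma\,a(h)^{-1}N_\Gamma(h)P(h),\qquad F(h):=\sigma\,a(h)^{-1}N_\Gamma(h)q(h).
\]

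For the functional-analytic framework I would take $X_0=W^{2-1/p}_p(\Sigma)$ and $X_1=W^{3-1/p}_p(\Sigma)$, so that the relevant time-trace space is $W^{3-2/p}_p(\Sigma)$ and the datum $\Gamma_0\in W^{3-2/p}_p$ corresponds to $h_0$ in this trace space. Two hypotheses of the abstract theorem then need to be verified. The first is that $(A,F)$ maps a neighborhood $V_\mu$ of $h_0$ in the trace space into $\mathcal{L}(X_1,X_0)\times X_0$ and is there real analytic; this follows from the analytic dependence of the geometric quantities on $h$, together with the analytic dependence of the Stokes solution, and hence of $N_\Gamma$, on the $h$-induced coefficients, using the solvability and regularity of \eqref{Stokes-stationary}.

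The main obstacle is the second hypothesis, the parabolicity of the nonlocal operator $A(h_0)=\sigma a(h_0)^{-1}N_\Gamma P(h_0)$. Since $N_\Gamma$ gains one derivative, mapping $W^{1-1/p}_p(\Gamma)$ into $W^{2-1/p}_p(\Gamma)$, it is a pseudodifferential operator of order $-1$ with symbol $\sim|\xi|^{-1}$, while $P(h_0)$ is of order $2$ with symbol $\sim|\xi|^2$; hence $A(h_0)$ is of order $1$ with principal symbol $\sim|\xi|$. That this symbol has the correct sign, so that $-A(h_0)$ generates an analytic semigroup enjoying maximal $L_p$-regularity, rests on the structural properties of $N_\Gamma$ recorded in \propref{ND-Stokes-properties}, in particular its self-adjointness and the positivity $(N_\Gamma g|g)_{L_2(\Gamma)}=2\int_\Omega\upmu|D(u)|_2^2\,dx\ge0$, combined with the positivity of the leading part $-\Delta_\Sigma$ of $P(h_0)$. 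I would establish the required $\mathcal{R}$-sectoriality of this composition by localization and symbol calculus, appealing to the mapping properties of the Stokes solution operator developed in \cite[Chapters 6 and 8]{PrSi16}.

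With both hypotheses in hand, the abstract quasilinear theorem provides, for $h_0$ in the trace space, a number $a=a(\Gamma_0)>0$ and a unique solution $h$ of \eqref{quasilinear} in the maximal-regularity class on $(0,a)$; since $p>n+2$, Sobolev embedding upgrades $h$ and thereby furnishes the classical solution $\Gamma$ of \eqref{geometric}. Real analyticity of $\mathcal{M}=\bigcup_{t\in(0,a)}\{t\}\times\Gamma(t)$ would finally be obtained by the parameter trick: one lets the group of space-time translations and scalings act on the problem, notes that the transformed equation depends real-analytically on these parameters while retaining maximal $L_p$-regularity, and invokes the implicit function theorem in the analytic category together with uniqueness to conclude joint analyticity of $h$ in space and time. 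This is carried out in \cite[Chapter 5]{PrSi16}.
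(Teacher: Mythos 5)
Your proposal is correct and follows essentially the same route as the paper, whose proof of Theorem~\ref{Stokes-existence} consists precisely in parameterizing $\Gamma$ over an analytic reference manifold, recasting \eqref{geometric} as the quasilinear problem \eqref{quasilinear}, and invoking the maximal $L_p$-regularity theory for quasilinear parabolic evolution equations together with the parameter trick for analyticity, as carried out in \cite[Chapter 5 and Section 12.5]{PrSi16}. Only a cosmetic slip: the principal part of $H_\Gamma(h)$ as an operator in $h$ is $+\Delta_\Sigma$ to leading order (so $P(h_0)\approx -\Delta_\Sigma$ is the positive part), which is exactly how you use it later, consistent with $\cA_\Sigma=-H_\Sigma'(0)$ and $L=\sigma N_\Sigma\cA_\Sigma$ in \eqref{L-Stokes-flow}.
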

\begin{proof}
The reader is referred to \cite[Section 12.5]{PrSi16}
\end{proof}

\medskip
\noindent
Suppose, as in the previous sections, 
that the dispersed phase $\Omega_1$ consists of $m$ disjoint connected components,
 $\Omega_1=\bigcup_{k=1}^m \Omega_{1,k}$. Let
$\Gamma_{\!k}:=\partial\Omega_{1,k}$, $\Gamma=\bigcup_{k=1}^m\Gamma_{\!k}$,
and let ${\sf M}_k:=|\Omega_{1,k}|$
denote the volume of $\Omega_{1,k}$. 
Then one shows, as in Section~2.3,
that problem problem~\eqref{geometric}, or equivalently problem~\eqref{Stokes-flow},
preserves the volume of each individual phase component.
Indeed, by Proposition~\ref{ND-Stokes-properties}(c)
\begin{equation*}
\frac{d}{dt}|\Omega_{1,k}(t)|
=\int_{\Gamma_{\!k}}V_\Gamma\,d\Gamma_{\!k}
=\sigma \int_{\Gamma_{\!k}}N_\Gamma H_\Gamma\,d\Gamma_{\!k}
= \sigma (N_\Gamma H_\Gamma |{\sf e}_k)_{L_2(\Sigma)}=0.
\end{equation*}
The time derivative of the surface area $|\Gamma(t)|$ is given by
\begin{equation*}
\begin{aligned}
\frac{d}{dt}|\Gamma(t)| 
& = -\int_\Gamma V_\Gamma H_\Gamma\,d\Gamma
  = - \sigma \int_\Gamma  (N_\Gamma H_\Gamma) H_\Gamma\,d\Gamma\\
&  = - \sigma (N_\Gamma H_\Gamma | H_\Gamma)_{L_2(\Gamma)}
 =-2\sigma \int_\Omega \upmu |D(u)|_2^2\,dx,
\end{aligned}
\end{equation*}
where $(u,\pi)$ is the solution of~\eqref{Stokes-stationary} with $g=H_\Gamma$. 
This shows that surface area is decreasing.
Therefore, $\Phi_0(\Gamma)=|\Gamma|$ constitutes a Lyapunov functional for \eqref{geometric}.
As in Section 2.5 one shows that $\Phi_0$ is a strict Lyapunov functional.
An analogous argument as in Section 2 also implies that
the equilibria of \eqref{Stokes-flow}
consist of zero velocities, constant pressures in the phase components, and 
that the dispersed phase consists of a collection of non-intersecting balls in $\Omega$.
Consequently, the set $\cE$ of non-degenerate equilibria for \eqref{geometric} is given by
$$\cE=\{\Sigma : \Sigma \in \cS\},$$
where $\cS$ is defined in~\eqref{def-S}. 
$\cE$ gives rise to a real analytic manifold of dimension $m(n+1)$.

In analogy to Section 2.7 one also shows that the critical points of the area functional $\Phi_0$
under the constraints of ${\sf M}_k={\sf M}_{0,k}$ constant
correspond to the equilibria of \eqref{geometric}, 
and  that all critical points are local minima of the area functional $\Phi_0$ 
under the given constraints.
\goodbreak
\begin{theorem}
The following assertions hold for problem \eqref{geometric}.
\begin{itemize}
\vspace{1mm} \item
[{\bf(a)}] The phase volumes $|\Omega_{1,k}|$ are preserved. 
\vspace{1mm} \item
[{\bf (b)}] The area functional $\Phi_0$ is a strict Lyapunov functional.
\vspace{1mm}\item
[{\bf (c)}] Each non-degenerate equilibrium 
consists of a finite union of non intersecting spheres which 
do not touch the outer boundary $\partial\Omega$. 
\vspace{1mm} \item
[{\bf (d)}] The set $\cE$ of non-degenerate equilibria forms a real analytic manifold
of dimension $m(n+1)$, where $m$ denotes the number of connected components of $\Omega_1$.
\vspace{1mm} \item
[{\bf (e)}] The critical points of the area functional
for prescribed phase volumes are precisely the equilibria of the system. 
\vspace{1mm} \item
[{\bf (f)}] All critical points  of the area functional
for prescribed phase volumes are local minima.
\end{itemize}
\end{theorem}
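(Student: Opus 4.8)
The plan is to transcribe the arguments of Section~2 into the reduced geometric formulation~\eqref{geometric}, with the Neumann-to-Dirichlet operator $N_\Gamma$ taking over the role played earlier by the bulk velocity field; the four properties collected in \propref{ND-Stokes-properties} will serve as the workhorses. For (a) I would differentiate $|\Omega_{1,k}(t)|$, insert $V_\Gamma=(u|\nu_\Gamma)=\sigma N_\Gamma H_\Gamma$, and invoke \propref{ND-Stokes-properties}(c) to conclude $(N_\Gamma H_\Gamma|{\sf e}_k)_{L_2(\Gamma)}=0$; this is precisely the computation displayed just above the theorem. For (b) I would use the first variation of area, $\frac{d}{dt}|\Gamma(t)|=-\sigma(N_\Gamma H_\Gamma|H_\Gamma)_{L_2(\Gamma)}=-2\sigma\int_\Omega\upmu|D(u)|_2^2\,dx\le0$, again via \propref{ND-Stokes-properties}(b). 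Strictness follows the template of Section~2.5: if $|\Gamma(t)|$ is constant on a time interval then $D(u)\equiv0$, whence $u\equiv0$ by Korn's inequality and the no-slip condition, so that $V_\Gamma=(u|\nu_\Gamma)=0$ and the configuration is stationary.

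For the equilibrium characterization~(c) I would argue that a stationary state satisfies $N_\Gamma H_\Gamma=0$, so by \propref{ND-Stokes-properties}(d) the curvature $H_\Gamma$ lies in ${\rm span}\{{\sf e}_1,\dots,{\sf e}_m\}$, i.e.\ it is constant on each component $\Gamma_{\!k}$. Alexandrov's theorem then forces each closed embedded component bounding a region to be a sphere, and the non-degeneracy hypotheses rule out touching spheres, contact with $\partial\Omega$, and infinite collections. Assertion~(d) is obtained by reusing the explicit parameterization of a sphere by its center and radius carried out in Section~2.6, which supplies $n+1$ analytic parameters per component and hence a real analytic manifold of dimension $m(n+1)$; note the absence of the temperature degree of freedom present in the heat-conducting case, which is exactly why the dimension drops from $m(n+1)+1$ to $m(n+1)$.

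Assertions~(e) and~(f) would be handled by the Lagrange-multiplier and second-variation calculation of Section~2.7, now applied to $\Phi_0=|\Gamma|$ under the volume constraints ${\sf M}_k={\sf M}_{0,k}$ alone. The first-order condition again yields constant $H_{\Gamma_{\!k}}$, identifying the critical points with the equilibria; the second variation is governed by the Jacobi operator $H_{\Sigma_k}^\prime=(n-1)/R_k^2+\Delta_{\Sigma_k}$, which is negative semi-definite on the mean-zero functions singled out by the volume constraints, so that $\Phi_0$ attains a constrained local minimum. The only genuinely non-routine ingredient, and thus the main obstacle, is the passage from $N_\Gamma H_\Gamma=0$ to the assertion that $H_\Gamma$ is constant on each component, together with the appeal to Alexandrov's theorem to conclude sphericity; everything else is sign bookkeeping (minima here in place of the maxima of the entropy functional, since $\Phi_0$ now decreases) and a direct transcription of the machinery already developed for problems~\eqref{NS-heat}--\eqref{kinematic} and~\eqref{NS-iso}.
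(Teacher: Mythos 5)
Your proposal is correct and takes essentially the same route as the paper, which establishes (a) and (b) by the displayed computations using Proposition~\ref{ND-Stokes-properties}(b),(c) and then disposes of strictness, the equilibrium characterization, the manifold structure, and assertions (e)--(f) by direct transcription of Sections~2.5--2.7 (with minima replacing maxima), exactly as you do. The only place you are more explicit than the paper is in naming Alexandrov's theorem for the step from constant mean curvature on each component to sphericity, a fact the paper uses tacitly when asserting that constant $H_{\Gamma_{\!k}}$ forces $\Omega_1$ to be a union of balls.
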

In oder to analyze the stability properties of equilibria for the geometric evolution equation \eqref{geometric}
one may proceed as follows.
Suppose $\Sigma=\bigcup_{k=1}^m{\Sigma_k}\in\cE $ is an equilibrium for \eqref{geometric}.
Choosing $\Sigma$ as a reference manifold one shows that problem \eqref{geometric},
or for that matter also problem \eqref{quasilinear}, can be written as
\begin{equation}
\label{geometric-linearized}
\partial_t h +\sigma N_\Sigma\cA_\Sigma h=G_\Sigma(h),\quad h(0)=h_0,
\end{equation}
where $\cA_\Sigma$ has the same meaning as in Section~4.
The nonlinear function $G_\Sigma$ satisfies $(G_\Sigma(0),G^\prime_\Sigma(0))=0$.

\medskip
\noindent
Let $X_0:=W^{2-1/p}_p(\Sigma)$, $X_1:=W^{3-1/p}_p(\Sigma)$, and set
\begin{equation}
\label{L-Stokes-flow}
L:{\sf D}(L)=X_1\subset X_0\to X_0, \quad L:=\sigma N_\Sigma\cA_\Sigma.
\end{equation}
\goodbreak
\begin{theorem}
\label{linstab-Stokes-flow} 
The operator $L$ has the following properties.
\begin{itemize}
\vspace{1mm}
\item[{\bf (a)}]
$-L$ generates a compact, analytic $C_0$-semigroup in $X_0$ which has 
the property of maximal $L_p$-regularity. 
\vspace{1mm}
\item[{\bf (b)}]
The spectrum of $L$ consists of countably many real eigenvalues of finite algebraic multiplicity. 
The spectrum is independent of $p$.
\vspace{1mm}
\item[{\bf (c)}] $-L$ has no positive eigenvalues.
\vspace{1mm}
\item[{\bf (d)}] $\lambda=0$ is a semi-simple eigenvalue of $L$ of multiplicity $m(n+1)$. 
\vspace{1mm}
\item[{\bf (e)}] The kernel ${\sf N}(L)$ of $L$ is isomorphic to the tangent space $T_{\Sigma}\cE$.
\end{itemize}
Hence, the equilibrium $\Sigma\in\cE$ is normally stable.
\end{theorem}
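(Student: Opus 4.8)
The plan is to follow the blueprint of the proof of \thmref{linstab-heat}, exploiting the two structural facts that $N_\Sigma$ is self-adjoint and positive semi-definite with ${\sf N}(N_\Sigma)={\rm span}\{{\sf e}_1,\dots,{\sf e}_m\}$ by \propref{ND-Stokes-properties}, and that each $\cA_{\Sigma_k}=-(n-1)/R_k^2-\Delta_{\Sigma_k}$ is self-adjoint with $\cA_{\Sigma_k}\ge 0$ on functions of mean value zero. Part (a) is the analytic core: I would obtain the generation of a compact, analytic $C_0$-semigroup with maximal $L_p$-regularity directly from the mapping properties of the two-phase Stokes problem \eqref{Stokes-stationary} and of $N_\Sigma$ established in \cite{PrSi16}, deducing compactness of the semigroup from the fact that $X_1=W^{3-1/p}_p(\Sigma)$ is compactly embedded into $X_0=W^{2-1/p}_p(\Sigma)$. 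This last fact shows that $L$ has compact resolvent, which yields at once the discreteness and finite algebraic multiplicity asserted in (b).

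For the reality and sign of the nonzero eigenvalues (parts (b) and (c)) I would argue by a single energy identity. Let $\lambda\ne 0$ be an eigenvalue with eigenfunction $h$. Since $Lh=\sigma N_\Sigma\cA_\Sigma h=\lambda h$, the eigenfunction lies in the range of $N_\Sigma$, hence has mean value zero on each component $\Sigma_k$ by \propref{ND-Stokes-properties}(c). Pairing the eigenvalue equation with $\cA_\Sigma h$ in $L_2(\Sigma)$ gives
\[
\lambda\,(h\,|\,\cA_\Sigma h)_{L_2(\Sigma)}=\sigma\,(N_\Sigma\cA_\Sigma h\,|\,\cA_\Sigma h)_{L_2(\Sigma)}.
\]
Both numbers $r:=(h\,|\,\cA_\Sigma h)$ and $s:=\sigma(N_\Sigma\cA_\Sigma h\,|\,\cA_\Sigma h)$ are real, with $s\ge 0$ by positive semi-definiteness of $N_\Sigma$, and $r\ge 0$ because $h$ is mean-value free and $\cA_\Sigma\ge 0$ there. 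If $r=0$, then $s=0$, forcing $N_\Sigma\cA_\Sigma h=0$ and hence $\lambda h=0$, a contradiction; thus $r>0$ and $\lambda=s/r\ge 0$ is real. This establishes simultaneously that every eigenvalue is real and that $-L$ has no positive eigenvalue. Independence of the spectrum from $p$ then follows since the eigenvalue problem can be analyzed in the $L_2$-framework and its eigenfunctions are smooth by elliptic regularity on the analytic spheres $\Sigma_k$.

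Part (d) is where the interplay of the two kernels must be tracked. The kernel of $L$ consists of those $h$ with $\cA_\Sigma h\in{\sf N}(N_\Sigma)={\rm span}\{{\sf e}_k\}$, that is, $\cA_{\Sigma_k}h_k=c_k$ constant on each component. Expanding in spherical harmonics and using that $\cA_{\Sigma_k}$ acts as $-(n-1)/R_k^2$ on constants, as $0$ on the degree-one harmonics, and with strictly positive eigenvalues on all higher modes, one finds $h_k\in{\rm span}\{Y_k^0,Y_k^1,\dots,Y_k^n\}$; hence ${\sf N}(L)$ has dimension $m(n+1)$ with the stated basis. For semi-simplicity I would verify ${\sf N}(L^2)={\sf N}(L)$: given $L^2h=0$, set $g:=Lh\in{\sf N}(L)$; since $g\in{\rm R}(N_\Sigma)$ is mean-value free on each component it carries no constant part, so $g$ is a combination of degree-one harmonics and therefore $\cA_\Sigma g=0$. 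Consequently $(\cA_\Sigma h\,|\,g)=(h\,|\,\cA_\Sigma g)=0$, while $(\cA_\Sigma h\,|\,g)=\sigma(N_\Sigma\cA_\Sigma h\,|\,\cA_\Sigma h)\ge 0$ with equality only if $N_\Sigma\cA_\Sigma h=0$; thus $g=Lh=0$. Finally, part (e) follows as in \thmref{linstab-heat}(e) from the inclusion $T_\Sigma\cE\subset{\sf N}(L)$ together with $\dim{\sf N}(L)=m(n+1)=\dim T_\Sigma\cE$, the tangent directions being exactly the rigid translations (the $Y_k^j$) and infinitesimal radius changes (the $Y_k^0$) of the spheres.

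The main obstacle is concentrated in part (a): the generation and maximal-regularity statement rests on the fine elliptic theory of the two-phase Stokes operator and of the Neumann-to-Dirichlet map, which I would import from \cite{PrSi16} rather than reprove. Within the self-contained spectral analysis the delicate point is the bookkeeping in (c) and (d), where both $N_\Sigma$ and $\cA_\Sigma$ are merely semi-definite and their kernels—the constants for $N_\Sigma$ and the degree-one harmonics for $\cA_\Sigma$—must be kept rigorously apart so that the sign arguments close.
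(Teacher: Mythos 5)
Your proposal is correct and follows essentially the same route as the paper: the same energy identity $\lambda(h|\cA_\Sigma h)_{L_2(\Sigma)}+\sigma(N_\Sigma\cA_\Sigma h|\cA_\Sigma h)_{L_2(\Sigma)}=0$ paired against $\cA_\Sigma h$, the same use of Proposition~\ref{ND-Stokes-properties} (mean-value freeness of the range of $N_\Sigma$, ${\sf N}(N_\Sigma)={\rm span}\{{\sf e}_1,\dots,{\sf e}_m\}$, semi-definiteness) for (c) and (d), and the same dimension count for (e). Your explicit treatment of the degenerate case $(h|\cA_\Sigma h)_{L_2(\Sigma)}=0$ and your spectral-decomposition computation of ${\sf N}(L)$ are only cosmetic variants (indeed slightly more careful) of the paper's argument.
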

\begin{proof}
The assertions in (a)-(b) follow from standard arguments.

\medskip\noindent
(c)
Suppose that $\lambda\in{\mathbb C}$, $\lambda\neq 0,$ is an eigenvalue for $-L$, that is,
\begin{equation}
\label{EV-L-Stokes}
 \lambda h +\sigma N_\Sigma\cA_\Sigma h=0
\end{equation}
for some nontrivial function $h\in W^{5/2}_2(\Sigma)$.
Taking the inner product of \eqref{EV-L-Stokes} with $\cA_\Sigma h$ in $L_2(\Sigma)$ yields
\begin{equation}
\label{NA}
 \lambda(h|\cA_\Sigma h)_{L_2(\Sigma)} + \sigma (N_\Sigma\cA_\Sigma h |\cA_\Sigma h)_{L_2(\Sigma)} =0.
\end{equation}
As $N_\Sigma$ and $\cA_\Sigma$ are symmetric, 
this identity implies that $\lambda$ must be real, 
hence the spectrum of $L$ is real.

Suppose that $\lambda>0$.
By Proposition~\ref{ND-Stokes-properties}(c), $(N_\Sigma \cA_\Sigma h|{\sf e}_k)_{L_2(\Sigma)}=0$ 
and consequently, $(h|{\sf e}_k)_{L_2(\Sigma)}=0$ as well, which
implies $(h|\cA_\Sigma h)_{L_2(\Sigma)}\ge 0$.
As $N_\Sigma$ is positive semi-definite on $L_2(\Sigma)$,
see Proposition~\ref{ND-Stokes-properties}(b), one concludes that $(h|\cA_\Sigma h)_{L_2(\Sigma)}=0$.
This yields $\cA_\Sigma h=0$, and then $h=0$ by \eqref{EV-L-Stokes} as $\lambda>0$.

\medskip
\noindent
(d) Suppose $h\in {\sf N}(L)$. Then 
$\cA_\Sigma h=\sum_{k=1}^m a_k{\sf e}_k$ by Proposition~\ref{ND-Stokes-properties}(d).
This implies 
$$h=h_0 -\sum_{k=1}^m (a_kR_k^2/(n-1))\; {\sf e}_k,$$
with $h_0\in {\sf N}(\cA_\Sigma)$, where $R_k$ denotes the radius of the sphere $\Sigma_k$.
As ${\sf N}(\cA_\Sigma)$ is spanned by the spherical harmonics $Y^j_k$ on $\Sigma_k$, we
see that ${\rm dim}\,{\sf N}(L)=m(n+1)$.

Next it will be shown that the eigenvalue $0$ is semi-simple.
Suppose $L^2 h=0.$ Then
$$ N_\Sigma\cA_\Sigma h = h_0 + \sum_{k=1}^m a_k {\sf e}_k,
\quad \mbox{ for some } h_0\in {\sf N}(\cA_\Sigma) \mbox{ and } a_k\in \C.$$
Multiplying this relation with ${\sf e}_j$ in $L_2(\Sigma)$ one obtains $a_k=0$ for all $k$.
Taking the $L_2(\Sigma)$ inner product of the relation $N_\Sigma\cA_\Sigma h = h_0$
with $\cA_\Sigma h$ yields 
$$(N_\Sigma \cA_\Sigma h|\cA_\Sigma h)_{L_2(\Sigma)}=(h_0|\cA_\Sigma h)_{L_2(\Sigma)}=0$$
as $\cA_\Sigma$ is symmetric and $h_0\in {\sf N}(\cA_\Sigma)$.
Therefore, $N_\Sigma\cA_\Sigma h=0$, that is, $h\in{\sf N}(L)$.

\medskip
\noindent
(e)
The assertion follows as ${\sf N}(L)$ and $T_{\Sigma}\cE$ are of the same dimension. 
\end{proof}
\goodbreak

\begin{theorem}
\label{geomeveq-stability} 
Let $p>n+2$
and suppose that $\Sigma$ is a (nondegenerate) equilibrium of \eqref{geometric}.

\noindent
Then any solution of \eqref{geometric-linearized} starting close to $0$ in $W^{3-2/p}_p(\Sigma)$ 
 exists globally and converges to an equilibrium $h_\infty$
 in $W^{3-2/p}_p(\Sigma)$ at an exponential rate.
Here, $h_\infty$ corresponds to some $\Gamma_\infty\in\cE$.
\end{theorem}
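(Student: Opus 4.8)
The plan is to view \eqref{geometric-linearized} as a semilinear parabolic evolution equation
$$\partial_t h + Lh = G_\Sigma(h), \quad h(0)=h_0,$$
with $L=\sigma N_\Sigma\cA_\Sigma$ as in \eqref{L-Stokes-flow}, and to deduce the assertion from the \emph{generalized principle of linearized stability} for normally stable equilibria, see \cite{PSZ09b, PSZ09a} or \cite[Chapter 5]{PrSi16}. The key observation is that \thmref{linstab-Stokes-flow} already furnishes \emph{all} the structural hypotheses this principle requires. First I would fix the functional-analytic setting $X_0=W^{2-1/p}_p(\Sigma)$, $X_1=W^{3-1/p}_p(\Sigma)$, with trace space $X_\gamma=(X_0,X_1)_{1-1/p,p}=W^{3-2/p}_p(\Sigma)$, which is precisely where the initial datum $h_0$ lives. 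By \thmref{linstab-Stokes-flow}(a), $-L$ generates an analytic semigroup on $X_0$ enjoying maximal $L_p$-regularity, so the linearized problem is well posed in the class $H^1_p(J;X_0)\cap L_p(J;X_1)$, $J=(0,a)$. Since \eqref{geometric} was already reduced to the quasilinear form \eqref{quasilinear} amenable to this theory (\thmref{Stokes-existence}), the nonlinearity $G_\Sigma$ is smooth on a neighborhood of $0$ in $X_\gamma$ and, by construction, satisfies $(G_\Sigma(0),G_\Sigma^\prime(0))=0$; hence \eqref{geometric-linearized} is locally well posed and the reference equilibrium is a genuine critical point of the right-hand side.

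Next I would invoke the spectral decomposition supplied by \thmref{linstab-Stokes-flow}(b)--(d): the spectrum of $L$ is discrete, real, and nonnegative, with $\lambda=0$ the only point on the imaginary axis and semi-simple of multiplicity $m(n+1)$. This produces a topological splitting $X_0=X_0^c\oplus X_0^s$ via the spectral projections $P^c,P^s$, where $X_0^c={\sf N}(L)$ has dimension $m(n+1)$ and the stable part $L^s$ of $L$ in $X_0^s={\sf R}(L)$ satisfies ${\rm Re}\,\sigma(L^s)\ge\delta_0>0$ for some $\delta_0$; consequently $e^{-L^s t}$ decays exponentially. The decisive geometric input, and what makes the equilibrium \emph{normally stable}, is that the manifold $\cE$ of equilibria is real analytic of dimension $m(n+1)$ and, by \thmref{linstab-Stokes-flow}(e), $T_\Sigma\cE\cong{\sf N}(L)=X_0^c$. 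Thus, near $0$, the set of equilibria of \eqref{geometric-linearized} can be written as a graph ${\sf x}\mapsto({\sf x},\psi({\sf x}))$ over $X_0^c$ with $\psi(0)=0$ and $\psi^\prime(0)=0$, obtained by solving the stationary equation on $X_0^s$ via the implicit function theorem.

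Finally, writing $h={\sf x}+{\sf y}$ with ${\sf x}=P^ch$, ${\sf y}=P^sh$, the flow decouples into a finite-dimensional, drift-free (at leading order) equation for ${\sf x}$ and a strongly contracting equation for ${\sf y}$ around the graph $\psi$. The generalized principle of linearized stability then yields a ball $B_{X_\gamma}(0,r)$ such that every solution with $h_0\in B_{X_\gamma}(0,r)$ exists globally, remains close to the equilibrium manifold, and converges in $W^{3-2/p}_p(\Sigma)$ at an exponential rate to a limit $h_\infty$ lying on the graph, i.e.\ corresponding to some $\Gamma_\infty\in\cE$. I expect the only nonroutine point to be the verification that $G_\Sigma$ is of class $C^1$ between the maximal-regularity spaces with the stated first-order vanishing at $0$, since this is what closes the contraction and implicit-function arguments underlying the abstract principle. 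Unlike the heat-conducting case of \thmref{nonlinear-stability}, here the coefficients are constant, so no loss of regularity arises and the inverse function theorem applies directly, without recourse to degree theory; given the Hanzawa reduction the requisite estimates are thus a matter of bookkeeping rather than a genuine obstacle.
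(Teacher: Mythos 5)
Your proposal takes exactly the route the paper does: the paper's entire proof is a citation of the generalized principle of linearized stability for quasilinear parabolic equations \cite{PSZ09b} (see also Chapter~5 of \cite{PrSi16}), with all of its hypotheses supplied by Theorem~\ref{linstab-Stokes-flow}, and your outline correctly reconstructs the skeleton of that abstract principle (spectral splitting $X_0=X_0^c\oplus X_0^s$ induced by the semi-simple eigenvalue $0$, parameterization of the equilibria near $0$ as a graph $\psi$ over $X_0^c={\sf N}(L)\cong T_\Sigma\cE$, exponential contraction on the stable part). One caveat: your opening framing of \eqref{geometric-linearized} as a \emph{semilinear} problem, with $G_\Sigma$ ``smooth on a neighborhood of $0$ in $X_\gamma$,'' is not accurate. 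The equation arises from the quasilinear form \eqref{quasilinear}, so $G_\Sigma(h)$ contains top-order terms (the perturbed Neumann-to-Dirichlet operator and curvature of $\Gamma_h$ involve derivatives of $h$ of the same order as $L$); it is only of class $C^1$ as a map into $X_0$ on a neighborhood of $0$ in $X_1$ (equivalently, one keeps the quasilinear structure $\partial_t h + A(h)h = F(h)$), which is why the quasilinear version of the principle from \cite{PSZ09b} is the one the paper invokes. Since that version delivers the same conclusion under the normal stability established in Theorem~\ref{linstab-Stokes-flow}, your argument goes through once this repositioning is made; your closing remarks about constant coefficients and the dispensability of degree theory are consistent with the paper.
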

\begin{proof}
The proof of this result is based on the generalized principle of linearized stability
for quasilinear parabolic equations 
introduced in ~\cite{PSZ09b}, see also Chapter 5 in the monograph~\cite{PrSi16}.
\end{proof}
The state manifold for \eqref{geometric} is defined by means of
$\cSM=\cMH^2(\Omega)$.
The main result of this section reads as follows.
\begin{theorem}
\label{geomatric-global}
Let $p>n+2$. 
Suppose that $\Gamma(t)$ is a solution of \eqref{geometric},
defined on its  maximal existence interval $[0,t_+)$.  
Assume there is a constant $M>0$ such that the following conditions hold on $[0,t_+)$:
\begin{enumerate}
    \vspace{1mm}
    \item[{\bf (i)}] $|\Gamma(t)|_{W^{3-2/p}_p}\le M<\infty$;
    \vspace{1mm}
    \item[{\bf (ii)}] $\Gamma(t)$ satisfies a uniform ball condition.
\end{enumerate}
Then $t_+ =\infty$, i.e., the solution exists globally, and $\Gamma(t)$ converges in $\cSM$ 
to an equilibrium $\Gamma_\infty\in\cE$ at an exponential rate.
The converse is also true: if a global solution converges in $\cSM$ to an equilibrium, 
then {\rm (i)} and {\rm (ii)} are valid.
\end{theorem}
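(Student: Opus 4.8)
The plan is to follow verbatim the template established in the proof of Theorem~\ref{qual-heat}, exploiting the fact that for the quasi-stationary Stokes flow \eqref{geometric} the only state variable is the free interface $\Gamma$, so that the thermal and velocity components of that argument are simply dropped. First I would recycle the level-function framework already set up there: the homeomorphism $\Phi:\cMH^2(\Omega,r)\to C^2(\bar\Omega)$, $\Phi(\Gamma)=\varphi_\Gamma$, and the induced notion of $W^s_p(\Omega,r)$ for hypersurfaces translate the two standing hypotheses into a compactness statement. Concretely, conditions (i) and (ii) imply that $\Gamma([0,t_+))\subset W^{3-2/p}_p(\Omega,r)$ is bounded, hence relatively compact in $W^{3-2/p-\ve}_p(\Omega,r)$ for some small $\ve>0$.

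Next I would cover this relatively compact orbit by finitely many balls with analytic centers $\Sigma_k$, so that for each $t\in[0,t_+)$ there is an index $j=j(t)$ with ${\rm dist}_{W^{3-2/p-\ve}_p}(\Gamma(t),\Sigma_{j})\le\delta$. On each chart I parameterize $\Gamma(t)$ over $\Sigma_{j}$ by a height function through a Hanzawa transformation, obtaining a bounded family of height functions in $W^{3-2/p}_p(\Sigma_j)$. Applying the local existence result Theorem~\ref{Stokes-existence} to these configurations produces a common lower bound $a>0$ on the existence time; uniqueness then permits continuation of the solution past $t_+$, forcing $t_+=\infty$, while continuous dependence upgrades the orbit to being relatively compact in $\cSM=\cMH^2(\Omega)$.

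With global existence and relative compactness of the orbit in hand, I would invoke the LaSalle-type argument. Since the surface area $\Phi_0(\Gamma)=|\Gamma|$ was shown above to be a strict Lyapunov functional for \eqref{geometric}, the $\omega$-limit set $\omega(\Gamma)$ is nonempty and contained in the equilibrium manifold $\cE$. Choosing any $\Sigma_*\in\omega(\Gamma)$, which by Theorem~\ref{linstab-Stokes-flow} is normally stable, the solution enters at some time $t_1$ a neighborhood of $\Sigma_*$ in which the generalized principle of linearized stability applies. Reparameterizing over $\Sigma_*$ and applying Theorem~\ref{geomeveq-stability} from $t_1$ onward then yields convergence of $\Gamma(t)$ to a single equilibrium $\Gamma_\infty\in\cE$ at an exponential rate in $\cSM$.

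For the converse, if a global solution converges in $\cSM$ to $\Gamma_\infty\in\cE$, its trajectory together with the limit point is compact in $\cSM$, which immediately gives the uniform bound (i). Because $\Gamma_\infty$ is a finite union of non-intersecting spheres staying away from $\partial\Omega$, it satisfies a ball condition for some radius $r_\infty>0$; convergence in $\cMH^2(\Omega)$ then propagates this to a uniform ball condition (ii) along the whole trajectory. The step I expect to require the most care is the continuation argument of the second paragraph: verifying that the pullbacks under the finitely many Hanzawa charts constitute a genuinely bounded, hence relatively compact, family of height functions to which Theorem~\ref{Stokes-existence} applies with a uniform lower bound on the existence time, and that continuous dependence then transports this relative compactness back to $\cSM$.
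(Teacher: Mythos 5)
Your proposal is correct and follows essentially the same route as the paper, whose proof of this theorem simply defers to the argument for Theorem~\ref{qual-heat} (with the temperature and velocity variables dropped): level functions and the uniform ball condition give relative compactness of the orbit in $W^{3-2/p-\ve}_p(\Omega,r)$, a finite cover by Hanzawa charts together with the local well-posedness result Theorem~\ref{Stokes-existence} yields $t_+=\infty$ and relative compactness in $\cSM$, the strict Lyapunov functional $\Phi_0(\Gamma)=|\Gamma|$ places $\omega(\Gamma)$ in $\cE$, and the normal stability result Theorem~\ref{geomeveq-stability} then gives exponential convergence, with the converse following by compactness. No gaps; your treatment of the continuation step and of the converse matches the paper's intended argument.
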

\begin{proof}
The proof is similar to that of Theorem~\ref{qual-heat}, see also \cite[Section 12.5]{PrSi16}.
\end{proof}
\section{Conclusions}
In this chapter,
the equilibrium states for the 
two-phase Navier-Stokes problem with heat-advection and surface tension \eqref{NS-heat}-\eqref{kinematic},
the two-phase isothermal Navier-Stokes problem with surface tension~\eqref{NS-iso},
and the two-phase Stokes flow with surface tension~\eqref{Stokes-flow}
are characterized.
It is shown that every equilibrium is normally stable, and that
every solution that starts close to an equilibrium exists globally
and converges to a (possibly different) equilibrium at an exponential rate.
Moreover, it is shown that
 the negative total entropy for \eqref{NS-heat}--\eqref{kinematic},
 the available energy for   \eqref{NS-iso},
 and the surface area for \eqref{Stokes-flow} 
constitute strict Lyapunov functionals. 
This implies that solutions which do not develop singularities converge
to an equilibrium in the topology of the state manifold $\cSM$.

\begin{thebibliography}{10}

\bibitem{Abe05b}
H.~Abels.
\newblock The initial-value problem for the {N}avier-{S}tokes equations with a
  free surface in {$L^q$}-{S}obolev spaces.
\newblock {\em Adv. Differential Equations}, 10(1):45--64, 2005.

\bibitem{All87}
G.~Allain.
\newblock Small-time existence for the {N}avier-{S}tokes equations with a free
  surface.
\newblock {\em Appl. Math. Optim.}, 16(1):37--50, 1987.

\bibitem{Bea84}
J.~T. Beale.
\newblock Large-time regularity of viscous surface waves.
\newblock {\em Arch. Rational Mech. Anal.}, 84(4):307--352, 1983/84.

\bibitem{BeNi84}
J.~T. Beale and T.~Nishida.
\newblock Large-time behavior of viscous surface waves.
\newblock In {\em Recent topics in nonlinear {PDE}, {II} ({S}endai, 1984)},
  volume 128 of {\em North-Holland Math. Stud.}, pages 1--14. North-Holland,
  Amsterdam, 1985.

\bibitem{BoPr10b}
D.~Bothe and J.~Pr{\"u}ss.
\newblock On the two-phase {N}avier-{S}tokes equations with
  {B}oussinesq-{S}criven surface fluid.
\newblock {\em J. Math. Fluid Mech.}, 12(1):133--150, 2010.

\bibitem{Dei85}
K.~Deimling.
\newblock {\em Nonlinear functional analysis}.
\newblock Springer-Verlag, Berlin, 1985.

\bibitem{Den90}
I.~V. Denisova.
\newblock A priori estimates for the solution of the linear nonstationary
  problem connected with the motion of a drop in a liquid medium.
\newblock {\em Trudy Mat. Inst. Steklov.}, 188:3--21, 191, 1990.
\newblock Translation in Proc. Steklov Inst. Math. {{\bf{1}}991}, no. 3, 1--24,
  Boundary value problems of mathematical physics, 14 (Russian).

\bibitem{Den94}
I.~V. Denisova.
\newblock Problem of the motion of two viscous incompressible fluids separated
  by a closed free interface.
\newblock {\em Acta Appl. Math.}, 37(1-2):31--40, 1994.
\newblock Mathematical problems for Navier-Stokes equations (Centro, 1993).

\bibitem{Den05}
I.~V. Denisova.
\newblock On the problem of thermocapillary convection for two incompressible
  fluids separated by a closed interface.
\newblock In {\em Trends in partial differential equations of mathematical
  physics}, volume~61 of {\em Progr. Nonlinear Differential Equations Appl.},
  pages 45--64. Birkh\"auser, Basel, 2005.

\bibitem{Den07}
I.~V. Denisova.
\newblock Global solvability of a problem on two fluid motion without surface
  tension.
\newblock {\em Zap. Nauchn. Sem. S.-Peterburg. Otdel. Mat. Inst. Steklov.
  (POMI)}, 348(Kraevye Zadachi Matematicheskoi Fiziki i Smezhnye Voprosy Teorii
  Funktsii. 38):19--39, 303, 2007.
 \newblock Translation in  J. Math. Sci. (N.Y.) 152(5):625--637, 2008.  

\bibitem{Den14}
I.~V. Denisova.
\newblock Global {$L_2$}-solvability of a problem governing two-phase fluid
  motion without surface tension.
\newblock {\em Port. Math.}, 71(1):1--24, 2014.

\bibitem{DeSo91}
I.~V. Denisova and V.~A. Solonnikov.
\newblock Solvability in {H}\"older spaces of a model initial-boundary value
  problem generated by a problem on the motion of two fluids.
\newblock {\em Zap. Nauchn. Sem. Leningrad. Otdel. Mat. Inst. Steklov. (LOMI)},
  188(Kraev. Zadachi Mat. Fiz. i Smezh. Voprosy Teor. Funktsii. 22):5--44, 186,
  1991.
\newblock Translation in  J. Math. Sci. 70(3):1717--1746, 1994.   

\bibitem{DeSo95}
I.~V. Denisova and V.~A. Solonnikov.
\newblock Classical solvability of the problem of the motion of two viscous
  incompressible fluids.
\newblock {\em Algebra i Analiz}, 7(5):101--142, 1995.
\newblock Translation in St. Petersburg Math. J. 7(5):755--786, 1996. 

\bibitem{DeSo11}
I.~V. Denisova and V.~A. Solonnikov.
\newblock Global solvability of the problem of the motion of two incompressible
  capillary fluids in a container.
\newblock {\em Zap. Nauchn. Sem. S.-Peterburg. Otdel. Mat. Inst. Steklov.
  (POMI)}, 397(Kraevye Zadachi Matematicheskoi Fiziki i Smezhnye Voprosy Teorii
  Funktsii. 42):20--52, 172, 2011.
\newblock  Translation in J. Math. Sci. (N. Y.) 185(5):668--686, 2012. 

\bibitem{FrRe02b}
A.~Friedman and F.~Reitich.
\newblock Quasi-static motion of a capillary drop. {II}. {T}he
  three-dimensional case.
\newblock {\em J. Differential Equations}, 186(2):509--557, 2002.

\bibitem{GuPr97}
M.~G{\"u}nther and G.~Prokert.
\newblock Existence results for the quasistationary motion of a free capillary
  liquid drop.
\newblock {\em Z. Anal. Anwendungen}, 16(2):311--348, 1997.

\bibitem{KPW13}
M.~K{\"o}hne, J.~Pr{\"u}ss, and M.~Wilke.
\newblock Qualitative behaviour of solutions for the two-phase
  {N}avier-{S}tokes equations with surface tension.
\newblock {\em Math. Ann.}, 356(2):737--792, 2013.

\bibitem{MoSo92}
I.~S. Mogilevski{\u\i} and V.~A. Solonnikov.
\newblock On the solvability of an evolution free boundary problem for the
  {N}avier-{S}tokes equations in {H}\"older spaces of functions.
\newblock In {\em Mathematical problems relating to the {N}avier-{S}tokes
  equation}, volume~11 of {\em Ser. Adv. Math. Appl. Sci.}, pages 105--181.
  World Sci. Publ., River Edge, NJ, 1992.

\bibitem{MuZa00}
P.~B. Mucha and W.~Zaj{\polhk{a}}czkowski.
\newblock On local existence of solutions of the free boundary problem for an
  incompressible viscous self-gravitating fluid motion.
\newblock {\em Appl. Math. (Warsaw)}, 27(3):319--333, 2000.

\bibitem{PaSo02}
M.~Padula and V.~A. Solonnikov.
\newblock On the global existence of nonsteady motions of a fluid drop and
  their exponential decay to a uniform rigid rotation.
\newblock In {\em Topics in mathematical fluid mechanics}, volume~10 of {\em
  Quad. Mat.}, pages 185--218. Dept. Math., Seconda Univ. Napoli, Caserta,
  2002.

\bibitem{Pro97}
G.~Prokert.
\newblock {\em Parabolic evolution equations for quasistationary free boundary
  problems in capillary fluid mechanics}.
\newblock Technische Universiteit Eindhoven, Eindhoven, 1997.
\newblock Dissertation, Technische Universiteit Eindhoven, Eindhoven, 1997.

\bibitem{PSSS12}
J.~Pr{\"u}ss, Y.~Shibata, S.~Shimizu, and G.~Simonett.
\newblock On well-posedness of incompressible two-phase flows with phase
  transitions: the case of equal densities.
\newblock {\em Evol. Equ. Control Theory}, 1(1):171--194, 2012.

\bibitem{PrSh12}
J.~Pr{\"u}ss and S.~Shimizu.
\newblock On well-posedness of incompressible two-phase flows with phase
  transitions: the case of non-equal densities.
\newblock {\em J. Evol. Equ.}, 12(4):917--941, 2012.

\bibitem{PSSW14}
J.~Pr{\"u}ss, S.~Shimizu, G.~Simonett, and M.~Wilke.
\newblock On incompressible two-phase flows with phase transitions and variable
  surface tension.
\newblock In {\em Recent Developments of Mathematical Fluid Mechanics},
  Advances in Mathematical Fluid Mechanics, pages 411--442. Birkh\"auser/Springer, Basel, 2016

\bibitem{PSW14}
J.~Pr{\"u}ss, S.~Shimizu, and M.~Wilke.
\newblock Qualitative behaviour of incompressible two-phase flows with phase
  transitions: the case of non-equal densities.
\newblock {\em Comm. Partial Differential Equations}, 39(7):1236--1283, 2014.

\bibitem{PrSi10a}
J.~Pr{\"u}ss and G.~Simonett.
\newblock On the {R}ayleigh-{T}aylor instability for the two-phase
  {N}avier-{S}tokes equations.
\newblock {\em Indiana Univ. Math. J.}, 59(6):1853--1871, 2010.

\bibitem{PrSi10b}
J.~Pr{\"u}ss and G.~Simonett.
\newblock On the two-phase {N}avier-{S}tokes equations with surface tension.
\newblock {\em Interfaces Free Bound.}, 12(3):311--345, 2010.

\bibitem{PrSi11}
J.~Pr{\"u}ss and G.~Simonett.
\newblock Analytic solutions for the two-phase {N}avier-{S}tokes equations with
  surface tension and gravity.
\newblock In {\em Parabolic problems}, volume~80 of {\em Progr. Nonlinear
  Differential Equations Appl.}, pages 507--540. Birkh\"auser/Springer Basel
  AG, Basel, 2011.

\bibitem{PrSi13}
J.~Pr{\"u}ss and G.~Simonett.
\newblock On the manifold of closed hypersurfaces in {${\mathbb R}^n$}.
\newblock {\em Discrete Cont. Dyn. Sys. A}, 33:5407--5428, 2013.

\bibitem{PrSi16}
J.~Pr\"uss and G.~Simonett.
\newblock {\em Moving interfaces and quasilinear parabolic evolution
  equations}, volume 105 of {\em Monographs in Mathematics}.
\newblock Birkh\"auser/Springer, Cham, 2016.

\bibitem{PSW13}
J.~Pr{\"u}ss, G.~Simonett, and M.~Wilke.
\newblock Invariant foliations near normally hyperbolic equilibria for
  quasilinear parabolic problems.
\newblock {\em Adv. Nonlinear Stud.}, 13(1):231--243, 2013.

\bibitem{PSW12}
J.~Pr{\"u}ss, G.~Simonett, and M.~Wilke.
\newblock On thermodynamically consistent {S}tefan problems with variable
  surface energy.
\newblock {\em Arch. Ration. Mech. Anal.}, 220(2):603--638, 2016.

\bibitem{PSZ09b}
J.~Pr{\"u}ss, G.~Simonett, and R.~Zacher.
\newblock On convergence of solutions to equilibria for quasilinear parabolic
  problems.
\newblock {\em J. Differential Equations}, 246(10):3902--3931, 2009.

\bibitem{PSZ09a}
J.~Pr{\"u}ss, G.~Simonett, and R.~Zacher.
\newblock On normal stability for nonlinear parabolic equations.
\newblock {\em Discrete Contin. Dyn. Syst.}, (Dynamical Systems, Differential
  Equations and Applications. 7th AIMS Conference, suppl.):612--621, 2009.

\bibitem{PSZ13a}
J.~Pr{\"u}ss, G.~Simonett, and R.~Zacher.
\newblock On the qualitative behaviour of incompressible two-phase flows with
  phase transitions: the case of equal densities.
\newblock {\em Interfaces Free Bound.}, 15(4):405--428, 2013.

\bibitem{PSZ13b}
J.~Pr{\"u}ss, G.~Simonett, and R.~Zacher.
\newblock Qualitative behavior of solutions for thermodynamically consistent
  {S}tefan problems with surface tension.
\newblock {\em Arch. Ration. Mech. Anal.}, 207(2):611--667, 2013.

\bibitem{ShSh07a}
Y.~Shibata and S.~Shimizu.
\newblock On a free boundary problem for the {N}avier-{S}tokes equations.
\newblock {\em Differential Integral Equations}, 20(3):241--276, 2007.

\bibitem{ShSh08}
Y.~Shibata and S.~Shimizu.
\newblock On the {$L_p$}-{$L_q$} maximal regularity of the {N}eumann problem
  for the {S}tokes equations in a bounded domain.
\newblock {\em J. Reine Angew. Math.}, 615:157--209, 2008.

\bibitem{ShSh11b}
Y.~Shibata and S.~Shimizu.
\newblock Report on a local in time solvability of free surface problems for
  the {N}avier-{S}tokes equations with surface tension.
\newblock {\em Appl. Anal.}, 90(1):201--214, 2011.

\bibitem{Shi11}
S.~Shimizu.
\newblock Local solvability of free boundary problems for the two-phase
  {N}avier-{S}tokes eqations with surface tension in the whole space.
\newblock In {\em Parabolic problems}, volume~80 of {\em Progr. Nonlinear
  Differential Equations Appl.}, pages 647--686. Birkh\"auser/Springer Basel
  AG, Basel, 2011.

\bibitem{Sol84}
V.~A. Solonnikov.
\newblock Solvability of the problem of evolution of an isolated amount of a
  viscous incompressible capillary fluid.
\newblock {\em Zap. Nauchn. Sem. Leningrad. Otdel. Mat. Inst. Steklov. (LOMI)},
  140:179--186, 1984.
\newblock Mathematical questions in the theory of wave propagation, 14.

\bibitem{Sol86}
V.~A. Solonnikov.
\newblock Unsteady flow of a finite mass of a fluid bounded by a free surface.
\newblock {\em Zap. Nauchn. Sem. Leningrad. Otdel. Mat. Inst. Steklov. (LOMI)},
  152(Kraev. Zadachi Mat. Fiz. i Smezhnye Vopr. Teor. Funktsii18):137--157,
  183--184, 1986.
\newblock Translation in
J. Soviet Math. 40(5):672--686, 1988.   

\bibitem{Sol87b}
V.~A. Solonnikov.
\newblock Evolution of an isolated volume of a viscous incompressible capillary
  fluid for large time values.
\newblock {\em Vestnik Leningrad. Univ. Mat. Mekh. Astronom.}, (vyp. 3):49--55,
  128, 1987.

\bibitem{Sol87a}
V.~A. Solonnikov.
\newblock Unsteady motion of an isolated volume of a viscous incompressible
  fluid.
\newblock {\em Izv. Akad. Nauk SSSR Ser. Mat.}, 51(5):1065--1087, 1118, 1987.
\newblock Translation in Math. USSR-Izv. 31(2):381Ð405, 1988.

\bibitem{Sol89}
V.~A. Solonnikov.
\newblock Unsteady motions of a finite isolated mass of a self-gravitating
  fluid.
\newblock {\em Algebra i Analiz}, 1(1):207--249, 1989.
\newblock Translation in Leningrad Math. J. 1(1):227--276, 1990.

\bibitem{Sol91}
V.~A. Solonnikov.
\newblock Solvability of a problem on the evolution of a viscous incompressible
  fluid, bounded by a free surface, on a finite time interval.
\newblock {\em Algebra i Analiz}, 3(1):222--257, 1991.
\newblock Translation in St. Petersburg Math. J. 3(1):189--220, 1992. 

\bibitem{Sol99}
V.~A. Solonnikov.
\newblock On quasistationary approximation in the problem of motion of a
  capillary drop.
\newblock In {\em Topics in nonlinear analysis}, volume~35 of {\em Progr.
  Nonlinear Differential Equations Appl.}, pages 643--671. Birkh\"auser, Basel,
  1999.

\bibitem{Sol03a}
V.~A. Solonnikov.
\newblock Lectures on evolution free boundary problems: classical solutions.
\newblock In {\em Mathematical aspects of evolving interfaces ({F}unchal,
  2000)}, volume 1812 of {\em Lecture Notes in Math.}, pages 123--175.
  Springer, Berlin, 2003.

\bibitem{Sol03b}
V.~A. Solonnikov.
\newblock {$L\sb q$}-estimates for a solution to the problem about the
  evolution of an isolated amount of a fluid.
\newblock {\em J. Math. Sci. (N. Y.)}, 117(3):4237--4259, 2003.
\newblock Nonlinear problems and function theory.

\bibitem{Sol04a}
V.~A. Solonnikov.
\newblock On the stability of axisymmetric equilibrium figures of a rotating
  viscous incompressible fluid.
\newblock {\em Algebra i Analiz}, 16(2):120--153, 2004.
\newblock  Translation in St. Petersburg Math. J. 16(2):377--400,  2005. 

\bibitem{Sol04b}
V.~A. Solonnikov.
\newblock On the stability of nonsymmetric equilibrium figures of a rotating
  viscous incompressible liquid.
\newblock {\em Interfaces Free Bound.}, 6(4):461--492, 2004.

\bibitem{Sol08}
V.~A. Solonnikov.
\newblock On problem of stability of equilibrium figures of uniformly rotating
  viscous incompressible liquid.
\newblock In {\em Instability in models connected with fluid flows. {II}},
  volume~7 of {\em Int. Math. Ser. (N. Y.)}, pages 189--254. Springer, New
  York, 2008.

\bibitem{Sol14}
V.~A. Solonnikov.
\newblock {$L_p$}-theory of the problem of motion of two incompressible
  capillary fluids in a container.
\newblock {\em J. Math. Sci. (N.Y.)}, 198(6, Problems in mathematical analysis.
  No. 75 (Russian)):761--827, 2014.

\bibitem{Tan95}
N.~Tanaka.
\newblock Two-phase free boundary problem for viscous incompressible
  thermocapillary convection.
\newblock {\em Japan. J. Math. (N.S.)}, 21(1):1--42, 1995.

\bibitem{Tan96a}
A.~Tani.
\newblock Small-time existence for the three-dimensional {N}avier-{S}tokes
  equations for an incompressible fluid with a free surface.
\newblock {\em Arch. Rational Mech. Anal.}, 133(4):299--331, 1996.

\bibitem{TaTa95}
A.~Tani and N.~Tanaka.
\newblock Large-time existence of surface waves in incompressible viscous
  fluids with or without surface tension.
\newblock {\em Arch. Rational Mech. Anal.}, 130(4):303--314, 1995.

\bibitem{WaTi12}
Y.~Wang and I.~Tice.
\newblock The viscous surface-internal wave problem: nonlinear
  {R}ayleigh-{T}aylor instability.
\newblock {\em Comm. Partial Differential Equations}, 37(11):1967--2028, 2012.

\bibitem{Wil14}
M.~Wilke.
\newblock {\em {R}ayleigh-{T}aylor instability for the two-phase
  {N}avier-{S}tokes equations with surface tension in cylindrical domains}.
\newblock Habil.-Schr. Halle, Univ., Naturwissenschaftliche Fakult\"at II,
  2013.

\end{thebibliography}
\def\cprime{$'$} \def\polhk#1{\setbox0=\hbox{#1}{\ooalign{\hidewidth
  \lower1.5ex\hbox{`}\hidewidth\crcr\unhbox0}}}


\end{document}